\theoremstyle{plain}
\newtheorem{theorem}{Theorem}[section]
\newtheorem{lemma}[theorem]{Lemma}
\newtheorem{proposition}[theorem]{\bf Proposition}
\newtheorem{corollary}[theorem]{Corollary}
\theoremstyle{remark}
\newtheorem{definition}[theorem]{\bf Definition}
\newtheorem{remark}[theorem]{\bf Remark}
\numberwithin{equation}{section}
\DeclareMathOperator*{\conv}{conv}
\newcommand*{\R}{\ensuremath{\mathbb{R}}}
\renewcommand*{\S}{\ensuremath{\mathcal{S}}}
\newcommand*{\N}{\ensuremath{\mathbb{N}}}
\newcommand*{\Z}{\ensuremath{\mathbb{Z}}}
\def\rn#1{\mathbb{R}^{#1}}
\newcommand*{\Q}{\ensuremath{\mathcal{Q}}}
\renewcommand*{\O}{\ensuremath{\mathcal{O}(1)}}
\renewcommand*{\L}{\ensuremath{\mathcal{L}}}
\newcommand*{\ul}{\ensuremath{u^\mathrm{L}}}
\newcommand*{\ur}{\ensuremath{u^\mathrm{R}}}
\newcommand*{\unu}{\ensuremath{u^\nu}}
\newcommand*{\e}{\ensuremath{\epsilon}}
\newcommand*{\BV}{\ensuremath{\text{BV}}}
\newcommand{\tv}[1]{\ensuremath{\mathrm{Tot.Var.}{\left\{#1\right\}}}}
\newcommand*{\Om}{\ensuremath{\Omega}}
\newcommand{\mres}{\mathbin{\vrule height 1.6ex depth 0pt width 0.13ex\vrule height 0.13ex depth 0pt width 1.3ex}}
\renewcommand*{\H}{\ensuremath{\mathcal{H}}}
\renewcommand*{\S}{\ensuremath{\mathrm{S}}}
\newcommand*{\iunu}{\ensuremath{u_0^\nu}}
\newcommand*{\iu}{\ensuremath{u_0}}
\newcommand{\infnorm}[1]{\ensuremath{{\|}}{{#1}}\ensuremath{\|_\infty}}
\newcommand*{\fnu}{\ensuremath{f^\nu}}
\newcommand*{\tif}{\ensuremath{\text{if }}}
\newcommand*{\rp}{\ensuremath{\mathbb{R}^+}}
\renewcommand*{\ss}{\ensuremath{\mathcal{S}(s)}}
\newcommand*{\G}{\text{Graph}}
\newcommand*{\intinf}{\ensuremath{\int^{\infty}_{-\infty}}}
\newcommand*{\sgn}{\ensuremath{\text{sgn}}}
\newcommand*{\tas}{\ensuremath{\ \text{as }}}
\newcommand*{\tfor}{\ensuremath{\ \text{for }}}
\newcommand*{\tforall}{\ensuremath{\ \text{for all }}}
\newcommand*{\br}{\ensuremath{B_{\rho}}}
\newcommand*{\pse}{\ensuremath{\partial^* E}}
\newcommand*{\pt}{\ensuremath{\frac{\partial}{\partial t}}}
\newcommand*{\dt}{\ensuremath{\frac{d}{dt}}}
\newcommand{\Osc}[1]{\ensuremath{\text{Osc.}{\left\{#1\right\}}}}
\newcommand*{\sty}{\ensuremath{\mathcal{S}_{\tau,\eta}}}
\renewcommand*{\conv}{\ensuremath{\text{conv}}}
\newcommand*{\loc}{\ensuremath{\text{loc}}}
\newcommand*{\cancel}{\ensuremath{\mathrm{Cancel}}}
 \newcommand*{\id}{\ensuremath{\mathrm{Id}}}
 \newcommand*{\opep}{\ensuremath{\overline{P_\e P}}}
   \newcommand*{\opebe}{\ensuremath{\overline{P_\e B_e}}}
\newcommand{\osc}[1]{\ensuremath{\mathrm{osc}\{#1\}}}
\begin{document}

\title[Structure of entropy solutions to scalar conservation laws]{Structure of entropy solutions to general scalar conservation laws in one space dimension}

\keywords{Scalar conservation law, entropy solution, wavefront tracking, coarea formula, global structure of solution.}

\author{Stefano Bianchini}
\address{SISSA, via Bonomea 265, 34136 Trieste, ITALY}
\email{bianchin@sissa.it}
\urladdr{http://people.sissa.it/~bianchin}

\author{Lei Yu}
\address{SISSA, via Bonomea 265, 34136 Trieste, ITALY}
\email{yulei@sissa.it}


\begin{abstract}
In this paper, we show that the entropy solution of a scalar conservation law is
\begin{itemize}
\item continuous outside a $1$-rectifiable set $\Xi$,
\item up to a $\mathcal H^1$ negligible set, for each point $(\bar t,\bar x) \in \Xi$ there exists two regions where $u$ is left/right continuous in $(\bar t,\bar x)$.
\end{itemize}
We provide examples showing that these estimates are nearly optimal.

In order to achieve these regularity results, we extend the wave representation of the wavefront approximate solutions to entropy solution. This representation can the interpreted as some sort of Lagrangian representation of the solution to the nonlinear scalar PDE, and implies a fine structure on the level sets of the entropy solution.
%
%
\end{abstract}

\thanks{This work is supported by the ERC Starting Grant 240385 "ConsLaw"}

\maketitle

\centerline{Preprint SISSA 11/2014/MATE}

\tableofcontents

\section{Introduction}
\label{S_intro}

This paper is concerned with the pointwise structure of entropy solutions to the Cauchy problem of scalar conservation law
\begin{subequations}
\label{e:cauchy}
\begin{equation}
\label{e:cauchyscalar}
u_t+f(u)_x=0,
\end{equation}
\begin{equation}
\label{e:cauchyinitial}
u(0,x)=u_0(x),
\end{equation}
\end{subequations}
assuming that $f\in C^2(\R)$ and $\iu\in L^1$ has bounded total variation. It is well known that entropy solutions satisfies
\begin{equation*}
\intinf |u(t,x)-v(t,x)|dx \leq \intinf |u(0,x)-v(0,x)|dx,
\end{equation*}
for each $t\geq 0$. This inequality implies the uniqueness and continuous dependence of the entropy solution (for reference on hyperbolic systems in one space dimensions, see \cite{Bre}). The above estimate yields also that
\begin{equation*}
\tv{u(t),\R} \leq \tv{u(0),\R},
\end{equation*}
and by means of the PDE one obtains in the sense of measures
\begin{equation*}
u_t = - f(u)_x.
\end{equation*}
This implies that $u$ is a $\BV_{\loc}$ function of the two variables $t,x$. Hence, it shares the pointwise structure properties of BV functions, that is, $u$ either is approximately continuous or has an approximate jump at each point $(t,x)$ out of a set $\Theta$ whose Hausdorff 1-dimensional measure is zero. (See Section 5.9 of \cite{EG}).

As the entropy solution $u$ can be constructed as the limit of front tracking approximate solutions $\{\unu\}_{\nu\geq 1}^\infty$, it is possible to study the pointwise structure by analyzing the corresponding properties of the approximations.  Then, passing to the limit, one can get the desired properties of the entropy solution.

This strategy has been used for admissible BV solutions to hyperbolic systems of conservation laws in \cite{BL} by Bressan and LeFloch. They proved that if the solution is obtained as the limit of front tracking approximate solutions, then the jump discontinuity points of the solution focus on countably many Lipschitz curves and outside these curves and countably many irregular points, the solution is continuous. Moreover, outside the irregular points, the solution has left and right limits along these curves and the speed of these curves satisfy the Rankine-Hugoniot relations. Later, in \cite{Bre}, Bressan proves the same result for the case when the characteristic fields of the equation are linearly degenerate or genuinely nonlinear. In \cite{BY-global}, Bianchini and Yu generalized this structure result to piecewise genuinely nonlinear systems.

However, in general, the same global structure result is not true for strictly hyperbolic system. In fact, in \cite{Bre-global} (see also Section \ref{S_examples}), the author constructs a $2\times 2$ strictly hyperbolic system
whose characteristic fields are neither piecewise genuinely nonlinear nor linearly degenerate. One can choose the initial data $u_0$ properly in order to have an admissible BV solution such that the set of its jump discontinuities belonging to the second characteristic family does not contain any segment on the domain $[0,1]\times\R$, even if it is of positive $\mathcal H^1$-measure. 

This paper contains two main results.

The first one, contained in Section \ref{S_pointwise_regul}, is that we prove a generalization of the regularity results to entropy solutions of scalar equations with general flux: more precisely,
\begin{enumerate}
\item the exceptional set $\Theta$ is at most countable (see \eqref{E_count_Theta});
\item $u$ is continuous outside countably many Lipschitz curves $\Xi$ (Lemma \ref{L_jump_set} and Proposition \ref{P_cont_out_jumps});
\item the exceptional set is contained in $\Xi$ (Lemma \ref{L_Theta_on_Xi});
\item in $\Xi \setminus \Theta$ the solution $u$ has $L^1$-approximate right and left limits (Theorem \ref{T_rect_out_inter}). 
\end{enumerate}
These results are the generalization of analog results for genuinely nonlinear systems, given in \cite{BL}, and extended in \cite{Liu_adm} to systems with finitely many inflection points on each characteristic field. \\
In \cite{BL} it is also shown that outside $\Theta$ there are right and left limits on the curves in $\Xi$. We are not able to recover completely this regularity, and in fact in the last section we present counterexamples showing that this is not true. Our best result (Theorem \ref{T_regul_jumps}) is that for every point $(\bar t,\bar x) \in \Xi$ there are two curves $\gamma^-_{\bar t,\bar x}(t) \leq \gamma^+_{\bar t,\bar x}(t)$ such that
\begin{equation*}
\lim_{\substack{(t,x) \to (\bar t,\bar x) \\ x < \gamma^-_{\bar t,\bar x}(t)}} \big| u(t,x) - u(\bar t,\bar x-) \big| = 0, \quad \lim_{\substack{(t,x) \to (\bar t,\bar x) \\ x > \gamma^+_{\bar t,\bar x}(t)}} \big| u(t,x) - u(\bar t,\bar x+) \big| = 0.
\end{equation*}
Moreover Theorem \ref{T_rect_out_inter} shows that outside a countable set $\Theta$ and for all $\delta > 0$ it holds
\begin{equation*}
\lim_{\substack{(t,x) \to (\bar t,\bar x) \\ x < \bar x + p(t-\bar t) + \delta |t - \bar t|}} u(t,x) = u(\bar t,\bar x-), \qquad \lim_{\substack{(t,x) \to (\bar t,\bar x) \\ x > \bar x + p(t-\bar t) + \delta |t - \bar t|}} u(t,x) = u(\bar t,\bar x+)
\end{equation*}
In particular these points are of approximate jump.

In order to prove these results, we study the level set structure of (the right-continuous representative of) the entropy solution $u$. To this aim, we generalize the construction of \cite{BM} for wavefront/Glimm approximated solutions to the entropy solution. This representation of the solution done in Section \ref{Ss_lagr_repr_real_u} is the second main result of the paper.

Set $J:=[0,\tv{u_0}]$. By \emph{wave representation} of the entropy solution $u^\nu$ we mean the unique triple of functions $(\mathtt X,\mathtt u,\mathtt a)$,
%
\begin{equation*}
\begin{array}{rclll}
\mathtt X &:& \R^+ \times J \supset E \to \R, & & \text{the \emph{position of the wave $s$}}, \\
\mathtt u &:& J \to \R, & & \text{the \emph{value of the wave $s$}}, \\
\mathtt a &:& \rp \times J \to \{-1,0,1\}, & & \text{the \emph{signed existence interval of the wave $s$}},
\end{array}
\end{equation*}
satisfying the following conditions:
\begin{enumerate}
\item \label{Cond_1_intro} the function $\mathtt a$ is of the form
\begin{equation*}
\mathtt a(t,s) = \mathcal S(s) \chi_{[0,\mathtt T(s))}(t)
\end{equation*}
for some functions
\begin{equation*}
\begin{array}{rclll}
\mathcal S &:& J \to \{-1,1\}, & & \text{the \emph{sign of the wave $s$}}, \\
\mathtt T &:& J \to \R^+, & & \text{the \emph{time of existence of the wave $s$}};
\end{array}
\end{equation*}
\item the set $E$ is given by
\begin{equation*}
E := \big\{ (t,s) : \mathtt T(s) \geq t \big\};
\end{equation*}
\item \label{Cond_2_intro} $s \mapsto \mathtt X(t,s)$ is increasing for all $t$, $t \mapsto \mathtt X(t,s)$ is Lipschitz for all $s$, and 
\begin{enumerate}
\item $D_x u(t) = \mathtt X(t,\cdot)_\sharp \Big( a(t,\cdot)\L^1\mres J \Big)$; 
\item $|D_x u|(t) = \mathtt X(t,\cdot)_\sharp \Big( |a(t,\cdot)| \L^1\mres J \Big)$;
\end{enumerate}
\item \label{Cond_3_intro} the value $\mathtt u$ 
is a $1$-Lipschitz function of $s$ satisfying 
\begin{equation*}
\frac{d}{ds} \mathtt u(s) = \mathcal S(s) \qquad \text{for \ $\mathcal L^1$-a.e. $s$}.
\end{equation*}
\end{enumerate}
The study of these functions for the approximate solution $u^\nu$ is fundamental to prove the main quadratic interaction estimate in \cite{BM}. Here for completeness we present a sketch of the proof of the existence of the wave representation for piecewise constant wavefront approximate solutions $u^\nu$ (Proposition \ref{P_uniq_wave_repr}) and of the quadratic interaction estimate (Theorem \ref{T_quadratic}).

To repeat this construction for entropy solutions, as a first step we prove a uniform bound on the level sets of the approximate solutions $u^\nu$: up to a negligible set of values, the set $\{u^\nu > w\}$ is bounded by finitely many Lipschitz curves $\{\gamma^\nu_{j,w}\}_j$, whose number depends only on the initial data \eqref{e:cauchyinitial}, and whose Lipschitz constant depends only on the flux $f$ of \eqref{e:cauchyscalar} (Lemma \ref{L_comap_A_nu_iw}). This fact follows from the particular choice of initial data for $u^\nu$ (Lemma \ref{l:approxbv}) and elementary properties of the wavefront tracking algorithm.

It is fairly easy to see that $\{u^\nu > w\}$ are thus (locally) precompact in Hausdorff topology, and due to the $L^1$-convergence of $u^\nu$ to the entropy solution $u$, one concludes that the same level set structure is true for $u$: there exists a representative $u$ of the entropy solution such that the level sets $\{u > w\}$ are bounded by finitely many Lipschitz curves $\{\gamma_{j,w}\}_j$ (Theorem \ref{T_right_cts_u}). An argument based on the monotonicity of the semigroup generated by \eqref{e:cauchyscalar} yields that one can restrict the intervals of existence of the curves in order to have them disjoint (Corollary \ref{C_wave_repr}).

At this point, the definition of the functions $(\mathtt X,\mathtt u,\mathtt a)$ is only a matter of reparametrizing the functions
\begin{equation*}
(j,w) \mapsto (\gamma_{j,w},w,(-1)^{j+1}),
\end{equation*}
where the latter is the sign of the $x$-derivative of $u$ on $\gamma_{j,w}$, assuming $j \mapsto \gamma_{j,w}$ ordered: this is done in Proposition \ref{P_map_s_jw} and Theorem \ref{T_wave_repr_cont}.

\subsection{Structure of the paper}
\label{Ss_paper_struct}

The paper is organized as follows.

In Section \ref{s:bv}, we recall some classical results about BV functions, e.g.  \emph{coarea formula} (Theorem \ref{t:cf}) and a special approximation lemma (Lemma \ref{l:approxbv}).

Section \ref{s:fta} also contains results already present in the literature. We include them for reference and also because some proofs are simpler than in the published works, and in order to underline the fact that these bounds depends only on $u_0$ and $f$. \\ First of all, after recalling the wavefront algorithm (Section \ref{s:cfta}), we generalize the notion of wave representation which is suitable also for entropy solutions (Section \ref{s:WR}), and we sketch the proof of its existence for wavefront approximate solutions. \\
In Section \ref{s:BM} we present a shortened proof of the quadratic interaction estimate found in \cite{BM}: this estimate implies that the speed of each wave $s$ is BV in time, with a bound depending only on $\tv{u_0}$. \\
The last part (Section \ref{s:elsa}) study the structure of the level sets of the approximate solutions $u^\nu$. Even if this structure is in some sense trivial, the aim is to show some uniform estimates of the boundary of these sets.

These uniform bounds are finally exploited in Section \eqref{S_level_1}. \\
The first result is the simultaneous convergence in Hausdorff and $L^1$ metric of $\mathcal L^1$-a.e. level sets, which shows that each level set $\{u > w\}$ is bounded by countably many disjoint Lipschitz curves (Section \ref{s:clsfta}). \\
Next, in Section \ref{Ss_lagr_repr_real_u}, a simple change of variable allows to build the wave representation of the solution $u$ (Theorem \ref{T_wave_repr_cont}), and to prove some useful estimates on the set where the waves $s$ are canceled (only on the jump set of $u$, Corollary \ref{C_canc_on_jumps}) and on the speed of the waves $s$ ($t \mapsto \mathtt X(t,s)$ is a characteristic of the PDE, Proposition \ref{P_cancell_points}). Finally, the quadratic interaction estimate is shown to be valid for the entropy solution in Theorem \ref{T_quadr_contin}.

The last section, Section \ref{S_pointwise_regul}, uses the wave representation to show the fine regularity structure of the solution $u$. First, it is possible to give a precise estimate on the tame variation/oscillation estimates, classically used in the dependency region of an interval: this is done in Lemmas \ref{L_exact_canc}, \ref{L_exact_l_infty_decr}, and in Corollary \ref{C_triangl_dep_tv} it is shown how to recover the standard estimates. Next, one defines the discontinuity set $\xi$ and the set where $x \mapsto u(t,x)$ is discontinuous, and shows immediately that
\begin{enumerate}
\item $\Xi$ is $1$-rectifiable (Lemma \ref{L_jump_set}),
\item $u$ is continuous outside $\Xi$ (Proposition \ref{P_cont_out_jumps}).
\end{enumerate}
The analysis of the waves $s$ passing through a point $(\bar t,\bar x)$ implies that $u$ is left/right continuous in $(\bar t,\bar x)$ in regions bounded by some Lipschitz curves $\gamma^\pm_{(\bar t,\bar x)}$ (Theorem \ref{T_regul_jumps}): this holds also in the set $\Theta$ where strong interactions/cancellations occurs. Two refinements of this result are then presented:
\begin{enumerate}
\item if $(\bar t,\bar x) \in \Xi \setminus \Theta$, then $(\bar t,\bar x)$ is an $L^1$-approximate jump point of $u$ (Theorem \ref{T_rect_out_inter});
\item for all $\delta > 0$ and $(\bar t,\bar x) \in \Xi \setminus \Theta$, $u(t,x)$ is left/right continuous in the cone regions
\begin{equation*}
\big\{ x < \bar x + \tilde \lambda(\bar t,\bar x) (t - \bar t) - \delta |t-\bar t| \big\} \quad \text{and} \quad \big\{ x > \bar x + \tilde \lambda(\bar t,\bar x) (t - \bar t) + \delta |t-\bar t| \big\},
\end{equation*}
where $\tilde \lambda(\bar t,\bar x)$ is the speed of the jump computed by the Rankine-Hugoniot condition.
\end{enumerate}
We conclude the paper with Section \ref{S_examples}, where we show that these results are in some sense optimal. In fact, we first recall the example of \cite{BY-global} showing that the jump set is a Cantor set $\mathcal J$ of positive $\mathcal H^1$-measure for which $\gamma^\pm_{(\bar t,\bar x)}(t)$, $(\bar t,\bar x) \in \mathcal J$, do not coincide in every open interval. \\
The second example (Section \ref{Sss_canc_vs_inter}) shows that the there may be a strong cancellation with negligible change in speed of the surviving waves. \\
Finally, the example in Section \ref{Sss_not_jump} shows that even if $(\bar t,\bar x) \notin \Theta$ the curves $\gamma^\pm_{(\bar t,\bar x)}(t)$ may not be tangent.

\section{Preliminary results on BV functions}
\label{s:bv}

In this section, we recall some necessary background materials on BV functions: we give only the statements without proof, since they can be easily found in the literature. We denote by $Dv$ the distributional derivative of the function $v$. 


Let $E$ be an $\L^n$-measurable subset of $\rn n$ which is of finite perimeter in the open set $\Om\subset \rn n$; we denote by $\partial^*E$ the reduced boundary of $E$ and by $\chi_E$ the characteristic function of the set $E$. 
\begin{definition}
Let $y\in \rn n$, we say $y\in \partial^* E$, the \emph{reduced boundary} of $E$, if
\begin{enumerate}[(i)]
\item $|D\chi_{E}|(B_\rho(y))>0,\tforall \rho > 0$,
\item $\pi_E(y):= \underset{\rho \searrow 0}{\lim}\frac{D\chi_E(\br (y))}{|D\chi_E|(\br (y))}$ \ \text{exists},
\item $|\pi_E(y)|=1$.
\end{enumerate}
The function $\pi_E:\partial^* E\to\S^{n-1}$ is called the \emph{generalized inner normal} to E.
\end{definition}

\begin{remark}\label{r:trc}
The following facts are well known.
\begin{enumerate}
\item If $E$ is a subset of $\R$ and has finite perimeter in the interval $(a,b)$, then its reduced boundary in $(a,b)$ consists of finitely many points.     
\item It holds (see Section 5.7 in \cite{EG})
\[
|D\chi_E|(\rn n-\partial^* E)=0,
\]
\item $\pse$ is a countably $(n-1)$-rectifiable set and the measure $|D\chi_E|$ coincides with $\H^{n-1} \mres \partial^* E$, see Section 3.5 in \cite{AFP}.
\item If $E \subset \R^2$ and $\partial E$, the topological boundary of $E$, consists of countably many injective Lipschitz curves $\{\gamma_j\}_j$, with $\mathcal H^1( \mathrm{Graph}(\gamma_j) \cap \mathrm{Graph}(\gamma_{j'})) = 0$ for $j \not= j'$, then it coincides with the reduced boundary of $E$ up to a $\mathcal H^1$-negligible set, that is
\[
\mathcal H^1 \big( \partial E \setminus \partial^* E \big) = 0,
\]
and the generalized inner normal to the boundary is equal to the inner normal to the boundary if it exists, i.e. $\mathcal H^1 \mres \partial E$-a.e..
\end{enumerate}
\end{remark}

A BV function in the open set $\Omega$ is an integrable function $v : \Omega \to \R$ such that its distributional derivative is a bounded measure. We recall the following coarea formula (see Theorem 3.40 and Definition 3.60 in \cite{AFP}): for notational convenience, we will write
\begin{equation*}
E_w(v) := v^{-1}\big( (w,+\infty) \big).
\end{equation*}

\begin{theorem}[Coarea formula in BV]
  \label{t:cf}
If $v\in \BV(\Om)$, the set $E_w(v)$ has finite perimeter in $\Om$ for $\L^1$-a.e. $w\in\R$ and
\begin{equation*}
    |Dv|(B)=\intinf\H^{n-1}\mres \partial^* E_w(v)(B)dw,
    \end{equation*}
    \begin{equation}
    \label{e:cfss}
    Dv(B)=\intinf \bigg[ \int_{B\cap \partial^*E_w(v)}\pi_{E_w(v)}(y)\H^{n-1}(dy) \bigg] dw,
\end{equation}
for each Borel set $B\subset\Om$.
\end{theorem}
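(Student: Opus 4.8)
The plan is to run the classical two-sided estimate and then localize, so let me fix notation: write $E_w := E_w(v)$ and let $P(\cdot\,;\Omega)$ denote the perimeter in $\Omega$. First I would dispose of the smooth case: if $u \in C^\infty(\Omega) \cap \BV(\Omega)$, then for $\L^1$-a.e.\ $w$ the level set $\{u=w\}$ is a regular hypersurface which, up to an $\H^{n-1}$-null set, coincides with $\partial^* E_w(u)$, and the classical coarea formula for smooth maps gives $\int_\Omega |\nabla u|\,dx = \intinf P(E_w(u);\Omega)\,dw$. For a general $v \in \BV(\Omega)$ I would then take smooth $v_k \to v$ in $L^1(\Omega)$ with $|Dv_k|(\Omega) \to |Dv|(\Omega)$ (a standard approximation, e.g.\ as in \cite{AFP}). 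Since $\intinf \|\chi_{E_w(v_k)} - \chi_{E_w}\|_{L^1(\Omega)}\,dw = \int_\Omega |v_k - v|\,dx \to 0$, after passing to a subsequence one has $\chi_{E_w(v_k)} \to \chi_{E_w}$ in $L^1(\Omega)$ for $\L^1$-a.e.\ $w$; lower semicontinuity of the perimeter together with Fatou's lemma then yields $\intinf P(E_w;\Omega)\,dw \le \liminf_k \int_\Omega |\nabla v_k|\,dx = |Dv|(\Omega)$, and in particular $E_w$ has finite perimeter in $\Omega$ for a.e.\ $w$, which is the first assertion.

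For the reverse inequality I would use the layer-cake identity $v(x) = \intinf \big( \chi_{E_w}(x) - \chi_{(-\infty,0)}(w) \big)\,dw$, valid pointwise, where the inner integral of the absolute value is $|v(x)| \in L^1(\Omega)$, so Fubini applies against any bounded compactly supported function. Hence for $g \in C^1_c(\Omega;\R^n)$ with $\|g\|_\infty \le 1$ one gets $\int_\Omega v\,\div g\,dx = \intinf \big( \int_\Omega \chi_{E_w}\,\div g\,dx \big)\,dw \le \intinf |D\chi_{E_w}|(\Omega)\,dw$, the constant term disappearing because $\int_\Omega \div g\,dx = 0$; taking the supremum over $g$ gives $|Dv|(\Omega) \le \intinf P(E_w;\Omega)\,dw$, hence equality of total masses. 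Running the same computation with $g = \phi\,e_i$, $\phi \in C^1_c(\Omega)$, produces $\int_\Omega \phi\,dD_i v = \intinf \big( \int_\Omega \phi\,dD_i\chi_{E_w} \big)\,dw$ for each $i$, i.e.\ $Dv = \intinf D\chi_{E_w}\,dw$ as $\R^n$-valued measures.

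Finally I would localize and translate. The set function $\mu(B) := \intinf |D\chi_{E_w}|(B)\,dw$ is a finite Borel measure on $\Omega$ (additivity by monotone convergence, finiteness by the previous step); restricting the test fields above to be supported in an open $U \subset \Omega$ gives $|Dv|(U) \le \mu(U)$, hence $|Dv| \le \mu$ on all Borel sets by outer regularity, and since $|Dv|(\Omega) = \mu(\Omega) < \infty$ the two measures coincide — this is the first displayed formula. Likewise $\nu_i(B) := \intinf D_i\chi_{E_w}(B)\,dw$ defines a signed measure with $|\nu_i| \le \mu < \infty$ which agrees with $D_i v$ on $C^1_c(\Omega)$, so $D_i v = \nu_i$; substituting $|D\chi_{E_w}| = \H^{n-1}\mres \partial^* E_w$ and $D\chi_{E_w} = \pi_{E_w}\,\H^{n-1}\mres \partial^* E_w$ from Remark \ref{r:trc}(3) yields both identities in \eqref{e:cfss}. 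The two points I expect to be genuinely delicate are the extraction of an a.e.-in-$w$ convergent subsequence of level sets (which is exactly what forces the "for a.e.\ $w$" in the statement) and the upgrade of the global mass identity to an identity of measures on every Borel set; the rest is routine manipulation with Fubini and the duality definition of perimeter.
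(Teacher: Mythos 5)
The paper gives no proof of this theorem at all --- it is recalled as a classical result with a pointer to Theorem 3.40 of \cite{AFP} --- and your argument is essentially the standard proof found there: the easy inequality by layer-cake plus the duality definition of perimeter, the hard inequality by smooth approximation, Sard/classical coarea, lower semicontinuity of perimeter and Fatou, and finally localization from open sets plus equality of total masses. The argument is correct; the only point a fully rigorous write-up would need to add is the $\L^1$-measurability in $w$ of $w \mapsto |D\chi_{E_w(v)}|(B)$ for a general Borel set $B$ (standard: establish it for open $B$, where the perimeter is a countable supremum of functions measurable in $w$, and extend by a monotone-class argument), which you implicitly assume when declaring $\mu$ to be a Borel measure.
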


Consider now an interval $I\subset\R$ and a map $v:\R \to \R^N$ with $N\geq 1$. We denote by $\tv{v; I}$ the \emph{Total Variation of $v$ on the interval $I$}, defined as
\begin{equation*}
\tv{v;I} = \sup_{\underset{x_i < x_{i+1}}{x_i \in I}} \sum_i \big| v(x_{i+1}) - v(x_i) \big|.
\end{equation*}
It is a well known fact that $\tv{v;I} = |Dv|(I)$ and there exists an $L^1$-representative $\tilde v$ such that $\tv{\tilde v;I} = |Dv|(I)$: in this paper we will consider right-continuous functions, so that our functions will be defined everywhere and $\tv{v;I} = |Dv|(I)$.

To obtain  the regularity results though the wavefront tracking approximation, we will use the following family of piecewise constant initial data, see Lemma 2.2 in \cite{Bre} for the construction.

\begin{lemma}
\label{l:approxbv}
Let $v:\R\to\R^N$ be right continuous with bounded total variation. Then, for each $\nu \in\N$, there exists a piecewise constant function $v^\nu$ such that
\begin{equation*}
\|v^\nu\|_\infty \leq \|v\|_\infty, \qquad \| v^\nu - v \|_\infty < 2^{-\nu},
\end{equation*}
\begin{equation*}
\mathcal H^0(\partial \{v^\nu > w\}) \leq \big|D \chi_{\{v > w\}} \big|.
\end{equation*}
\end{lemma}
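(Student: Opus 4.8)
The plan is to construct $v^\nu$ level set by level set, exploiting the fact that for a BV function $v$ the level sets $\{v > w\}$ are sets of finite perimeter for $\mathcal L^1$-a.e. $w$, and that by Remark \ref{r:trc}(1) the reduced boundary of each such level set in $\R$ consists of finitely many points. First I would normalize: since $v$ has bounded total variation and is right continuous, there exist $a < b$ such that $v$ is essentially concentrated (as far as its variation is concerned) in $[a,b]$, and $\|v\|_\infty =: M < \infty$; thus all relevant level sets correspond to $w \in (-M, M)$.

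The key idea is a discretization in the range variable. Fix $\nu$ and choose a finite grid of values $w_0 < w_1 < \dots < w_K$ in $(-M,M)$ with mesh $< 2^{-\nu}$, taking care (possible since only $\mathcal L^1$-a.e. $w$ is good) that each $w_j$ is a value for which $\{v > w_j\}$ has finite perimeter and $\mathcal H^0(\partial^* \{v > w_j\}) = |D\chi_{\{v>w_j\}}|(\R)$. Then define
\begin{equation*}
v^\nu(x) := w_0 + \sum_{j=1}^{K} (w_j - w_{j-1}) \chi_{\{v > w_j\}}(x),
\end{equation*}
which is the natural layer-cake staircase approximation of $v$ from below. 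Because the $w_j$ are good values, each $\{v > w_j\}$ agrees up to a null set with a finite union of intervals, so after choosing the canonical (right-continuous) representative of each such set $v^\nu$ is genuinely piecewise constant. The bound $\|v^\nu\|_\infty \le \|v\|_\infty$ is immediate since $w_0 \le v^\nu \le w_K$ stay within the range of $v$ (one should actually anchor $w_0$ at $-M$ or at $\inf v$ so that no undershoot occurs, and similarly $w_K$ near $\sup v$); the estimate $\|v^\nu - v\|_\infty < 2^{-\nu}$ follows from the layer-cake identity $v(x) = w_0 + \int_{w_0}^{\infty} \chi_{\{v>w\}}(x)\,dw$ together with the mesh bound, valid at every point where $v$ equals its right-continuous representative.

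For the perimeter estimate, the point is monotonicity of level sets: if $w < w'$ then $\{v^\nu > w'\} \subset \{v^\nu > w\}$, and moreover by construction $\{v^\nu > w\}$ is, for each $w$, exactly equal to $\{v > w_{j}\}$ where $w_j$ is the smallest grid value exceeding $w$ (and empty if $w \ge w_K$). Hence
\begin{equation*}
\mathcal H^0\big(\partial \{v^\nu > w\}\big) = \mathcal H^0\big(\partial \{v > w_j\}\big) = |D\chi_{\{v > w_j\}}|(\R),
\end{equation*}
and it remains to see $|D\chi_{\{v > w_j\}}|(\R) \le |D\chi_{\{v > w\}}|(\R)$ when $w \le w_j$. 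This is where the coarea formula (Theorem \ref{t:cf}) enters: integrating $|D\chi_{\{v>s\}}|(\R)$ over $s$ in a neighbourhood and using lower semicontinuity / the fact that the total variation of the indicator of a superlevel set of a BV function of one variable is nonincreasing in the threshold (each time the threshold increases, intervals can only merge or disappear, never split). I would prove this monotonicity directly: for a right-continuous BV function of one real variable, $\{v>w\}$ is a countable union of intervals whose number of boundary points is the number of sign changes of $v - w$, and raising $w$ cannot create new sign changes — this is the elementary combinatorial heart of the argument, essentially Lemma 2.2 of \cite{Bre}.

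The main obstacle I anticipate is not any single deep step but the bookkeeping needed to make the three requirements simultaneously exact: choosing the grid inside the full-measure set of good thresholds \emph{and} with small mesh \emph{and} so that the resulting staircase neither exceeds $\|v\|_\infty$ nor loses the uniform $2^{-\nu}$ closeness at every point (not just a.e.). One must be slightly careful that passing to right-continuous representatives of the finitely many level sets is consistent — i.e. yields a single well-defined piecewise constant function whose own level sets are exactly those representatives — but since finitely many nested finite unions of intervals are involved this is routine. I expect the perimeter monotonicity in one dimension to be the conceptually essential ingredient, and everything else to be careful but standard measure-theoretic verification.
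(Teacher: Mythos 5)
The decisive step in your perimeter estimate --- that $w \mapsto \big|D\chi_{\{v>w\}}\big|(\R)$ is nonincreasing, so that $\mathcal H^0(\partial\{v>w_{j^*}\}) \le |D\chi_{\{v>w\}}|(\R)$ whenever $w \le w_{j^*}$ --- is false, and with it your construction fails the required bound. Raising the threshold makes the superlevel set \emph{shrink}, and a shrinking interval can \emph{split} when the threshold crosses a local minimum of $v$ in its interior. Take $v$ continuous, rising from $0$ to $1$, dipping to $1/2$, rising back to $1$ and returning to $0$ (a double hump with a saddle at height $1/2$): then $\{v>w\}$ has two boundary points for $0<w<1/2$ but four for $1/2<w<1$. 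Choosing $w$ just below $1/2$ and the nearest grid value $w_{j^*}$ just above it, your $v^\nu$ gives $\mathcal H^0(\partial\{v^\nu>w\}) = \mathcal H^0(\partial\{v>w_{j^*}\}) = 4 > 2 = |D\chi_{\{v>w\}}|(\R)$. The count $\mathcal H^0(\partial^*\{v>w\})$ is merely an $L^1$ function of $w$ (that is all the coarea formula gives) and is monotone in neither direction, so rounding up instead of down does not repair this: no discretization purely in the range variable can produce the pointwise-in-$w$ inequality the lemma demands, and that inequality is exactly what the paper later uses to get the uniform bound $N^\nu_w \le N_w$.

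The construction the paper intends (Lemma 2.2 of \cite{Bre}, which is a sampling lemma, not the monotonicity statement you attribute to it) discretizes the \emph{domain} instead: choose $x_0 < x_1 < \dots < x_n$ so that the oscillation of $v$ on each $[x_{i-1},x_i)$ is less than $2^{-\nu}$ --- possible since $v$ is right continuous with bounded variation, inserting a partition point at each jump of size $\ge 2^{-\nu}$ --- and set $v^\nu \equiv v(x_{i-1})$ on $[x_{i-1},x_i)$. The two sup-norm bounds are then immediate because $v^\nu$ takes only values attained by $v$ and differs from $v$ by less than the local oscillation. For the perimeter bound, every boundary point of $\{v^\nu>w\}$ is a partition point $x_i$ at which the sampled values cross the level $w$, i.e. $v(x_{i-1})>w\ge v(x_i)$ or vice versa; since $v$ actually attains values on both sides of $w$ at the two endpoints, the essential boundary of $\{v>w\}$ must meet $(x_{i-1},x_i]$, and distinct indices $i$ yield points in disjoint intervals. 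Hence $\mathcal H^0(\partial\{v^\nu>w\}) \le \mathcal H^0(\partial^*\{v>w\}) = |D\chi_{\{v>w\}}|(\R)$ for \emph{every} $w$, with no exceptional set and no monotonicity needed. This sampling argument also covers the vector-valued case $v:\R\to\R^N$ stated in the lemma, which a layer-cake formula cannot address.
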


In particular by coarea formula
\begin{equation}
\label{e:approxbv}
\tv{v^\nu}\leq \tv{v}. 
\end{equation}

\section{Front tracking approximations} 
\label{s:fta}

In order to prove the existence of a \emph{wave representation} (or \emph{Lagrangian representation}) of the solution $u(t)$, we will use the well known fact that the entropy solution $u(t)$ to \eqref{e:cauchy} can be constructed as the limit of \emph{wavefront tracking approximations $\{\unu(t)\}_{\nu\geq 1}$}. We first briefly recall the wavefront tracking algorithm, with the aim of settling the notation, and then we present a simplified proof of the existence of a \emph{wave representation} for the approximate solution and an additional compactness estimate for the speed of the wavefront: the original proofs are given in \cite{BM}. We finally conclude this section with some analysis of the structure of level sets for wavefront approximate solutions, which will be useful later.

\subsection{The construction of front tracking approximations}
\label{s:cfta}


By Lemma \ref{l:approxbv} and \eqref{e:approxbv}, we consider a sequence $\{\iunu\}_{\nu\geq 1}$ of right continuous piecewise constant functions with finite jump discontinuities, such that
\begin{itemize}
\item[(1)] $\iunu(x)\in 2^{-\nu}\Z,\ \forall\, x\in \R$,
\item[(2)] $\|\iunu-\iu\|_{L^\infty} \to 0$,
\item[(3)] $\tv{\iunu}\leq\tv{\iu}$,
\item[(4)] $\infnorm{\iunu}\leq \infnorm{\iu}$.
\end{itemize}
Let $\fnu$ be the piecewise affine interpolation of $f$ with grid points of $2^{-\nu} \N$:
\[
\fnu(u) = (1-t) f\big(2^{-\nu} k\big) + t f\big(2^{-\nu}(k+1)\big) \quad \text{for} \quad u = (1-t) 2^{-\nu} k + t 2^{-\nu}(k+1), \ t \in [0,1].
\]

For a fixed $\nu\geq 1$, let $x_1<\cdots<x_p$ be the points where $\iunu$ is discontinuous: at each $x_i$, solve the Riemann problem
\begin{equation*}
 \begin{cases}
  (\tilde u_i)_t+(f^\nu(\tilde u_i))_x=0,\\
\tilde u_i(0,x)=\begin{cases}
                \iunu(x_i-) &\tif x<0,\\
                \iunu(x_i+) &\tif x>0.
                \end{cases}
\end{cases}
\end{equation*}
Recall that the solution $\tilde u_i$ is given by
\[
\tilde u_i(t,x) =
\begin{cases}
\Big( \frac{d}{du} \mathrm{conv}_{[\iunu(x_i-),\iunu(x_i+)]} f \Big)^{-1} \big( \frac{x}{t} \big) & \iunu(x_i-) < \iunu(x_i+), \\
\Big( \frac{d}{du} \mathrm{conc}_{[\iunu(x_i+),\iunu(x_i-)]} f \Big)^{-1} \big( \frac{x}{t} \big) & \iunu(x_i-) > \iunu(x_i+), \\
\end{cases}
\]
where if $\varsigma : I \to \R$ is increasing and bounded we denote by $\varsigma^{-1}$ its pseudoinverse. It is easy to see that the solution $\tilde u_i(t,x)$ assumes only values in the grid $2^{-\nu} \N$, and that its $L^\infty$-norm is decreasing.

The solution $u^\nu$ for small time is then constructed by piecing together the functions $\unu(t,x)=\tilde u_i(t,x-x_i)$. The solution can be prolonged up to a first time $t_1>0$ when two or more \emph{wavefronts} collide: one then solves again the corresponding Riemann problem and the solution can be thus continued up to a time $t_2>t_1$ where there are wavefront collisions again. Between two collisions, the jump discontinuities propagate with a constant speed, and we will refer to them as \emph{wavefronts}. We say the wavefront located at $x$ is \emph{positive} if the sign of the jump in $x$ is positive and the wavefront is \emph{negative} in the other case.
%

In \cite{Daf-poly} (see also Section 6.1 in \cite{Bre}) it is shown that this procedure can be continued for all time: in fact, at each interaction either the total variation of $\unu$ is decreasing of at least $2^{1-\nu}$ (\emph{cancellation}) or the number of wavefronts decreases by $1$ (\emph{interaction}).

A standard perturbation technique on the speed of the wavefronts allows to assume that each interaction or cancellation point involves only 2 incoming wavefronts and at each time $t$ no more than one collision occurs: let $\{t_j\}$ (with $t_0 = 0$ for convention), be the collision times, and let $x_j$ be the (unique) collision point at time $t_j$. W.l.o.g. we will assume that $u^\nu(t)$ (or $u(t)$, the real solution) is right continuous in $x$ for all $t \geq 0$.

The convergence of $u^\nu(t)$ to $u(t)$ in $L^1_\mathrm{loc}$ for all $t \in \R$ is a consequence of the compactness of BV and the uniqueness of the entropy solution $u$. W.l.o.g. in the following we will suppose that $0 \leq u(t), \unu(t) \leq M$ and that the support of $u,\unu$ is the set $\{|x| \leq C + \Lambda t\}$, $C$ large constant, because of the finite speed of propagation
\begin{equation}
\label{E_Lambda_def}
\Lambda \leq \sup_{|u| \leq \|u_0\|_\infty} |f'(u)|.
\end{equation}

%
%
%
%

\subsection{Wave representation}
\label{s:WR}


In \cite{BM} the authors introduce the \emph{wave representation} of an approximate solution $u^\nu$: this is the unique triple of functions $(\mathtt X^\nu,\mathtt u^\nu,\mathtt a^\nu)$,
%
\begin{equation*}
\begin{array}{rclll}
\mathtt X^\nu &:& \rp \times (0,\tv{\iunu}] \supset E^\nu \to \R, & & \text{the \emph{position of the wave $s$}}, \\
\mathtt u^\nu &:& (0,\tv{\iunu}] \to \R, & & \text{the \emph{value of the wave $s$}}, \\
\mathtt a^\nu &:& \rp \times (0,\tv{\iunu}] \to\{-1,0,1\}, & & \text{the \emph{signed existence interval of the wave $s$}},
\end{array}
\end{equation*}
satisfying the following conditions:
\begin{enumerate}
\item \label{Cond_1} the function $\mathtt a^\nu$ is of the form
\begin{equation*}
\mathtt a^\nu(t,s) = \mathcal S^\nu(s) \chi_{[0,\mathtt T^\nu(s))}(t)
\end{equation*}
for some functions
\begin{equation*}
\begin{array}{rclll}
\mathcal S^\nu &:& (0,\tv{\iunu}] \to \{-1,1\}, & & \text{the \emph{sign of the wave $s$}}, \\
\mathtt T^\nu &:& (0,\tv{\iunu}] \to \R^+, & & \text{the \emph{time of existence of the wave $s$}};
\end{array}
\end{equation*}
\item the set $E^\nu$ is given by
\begin{equation*}
E^\nu = \big\{ (t,s) : \mathtt T^\nu(s) \geq t \big\};
\end{equation*}
\item \label{Cond_2} $s \mapsto \mathtt X^\nu(t,s)$ is increasing for all $t$, $t \mapsto \mathtt X^\nu(t,s)$ is Lipschitz for all $s$, and 
\begin{enumerate}
\item $D_x \unu(t) = \mathtt X^\nu(t,\cdot)_\sharp \Big( a^\nu(t,\cdot)\L^1\mres (0,\tv{\iunu}] \Big)$, i.e. for all $t \geq 0$, $\phi \in C^1(\R)$
\begin{equation*}
- \int_{\R} \unu(t,x) D_x\phi(x)dx = \int_0^{\tv{\iunu}} \phi(X^\nu(t,s))a^\nu(t,s)ds,
\end{equation*}
\item $|D_x \unu|(t) = \mathtt X^\nu(t,\cdot)_\sharp \Big( |a^\nu(t,\cdot)| \L^1\mres (0,\tv{\iunu}] \Big)$;
\end{enumerate}
\item \label{Cond_3} the value $\mathtt u^\nu$ satisfies for all $t < \mathtt T(s)$
\[
\begin{split}
\mathtt u^\nu(s) =&~ D_x \unu(t)(-\infty,x) + \int_{\{s' < s : \mathtt X^\nu(t,s') = \mathtt X^\nu(t,s)\}} a^\nu(t,s') ds' \\
=&~ \unu(t,\mathtt X^\nu(t,s-)) + \int_{\{s' < s : \mathtt X^\nu(t,s') = \mathtt X^\nu(t,s)\}} |a^\nu(t,s')| ds'.
\end{split}
\]
In particular $s \mapsto \mathtt u^\nu(s)$ is a $1$-Lipschitz function of $s$ satisfying 
\begin{equation*}
\frac{d}{ds} \mathtt u^\nu(s) = \mathcal S^\nu(s).
\end{equation*}
\end{enumerate}
The uniqueness is obtained by assuming these functions to be left continuous in space (a choice in accordance with $\unu(t)$ right continuous) and right continuous in time.

We give a sketch of the construction of the wave representation for completeness: an example of the wave representation for a simple wave pattern is given in Figure \ref{f:wave_curve_approx}.

\begin{figure}
        \centering
        \begin{subfigure}[b]{0.40\textwidth}
                \def\svgwidth{180pt}
                 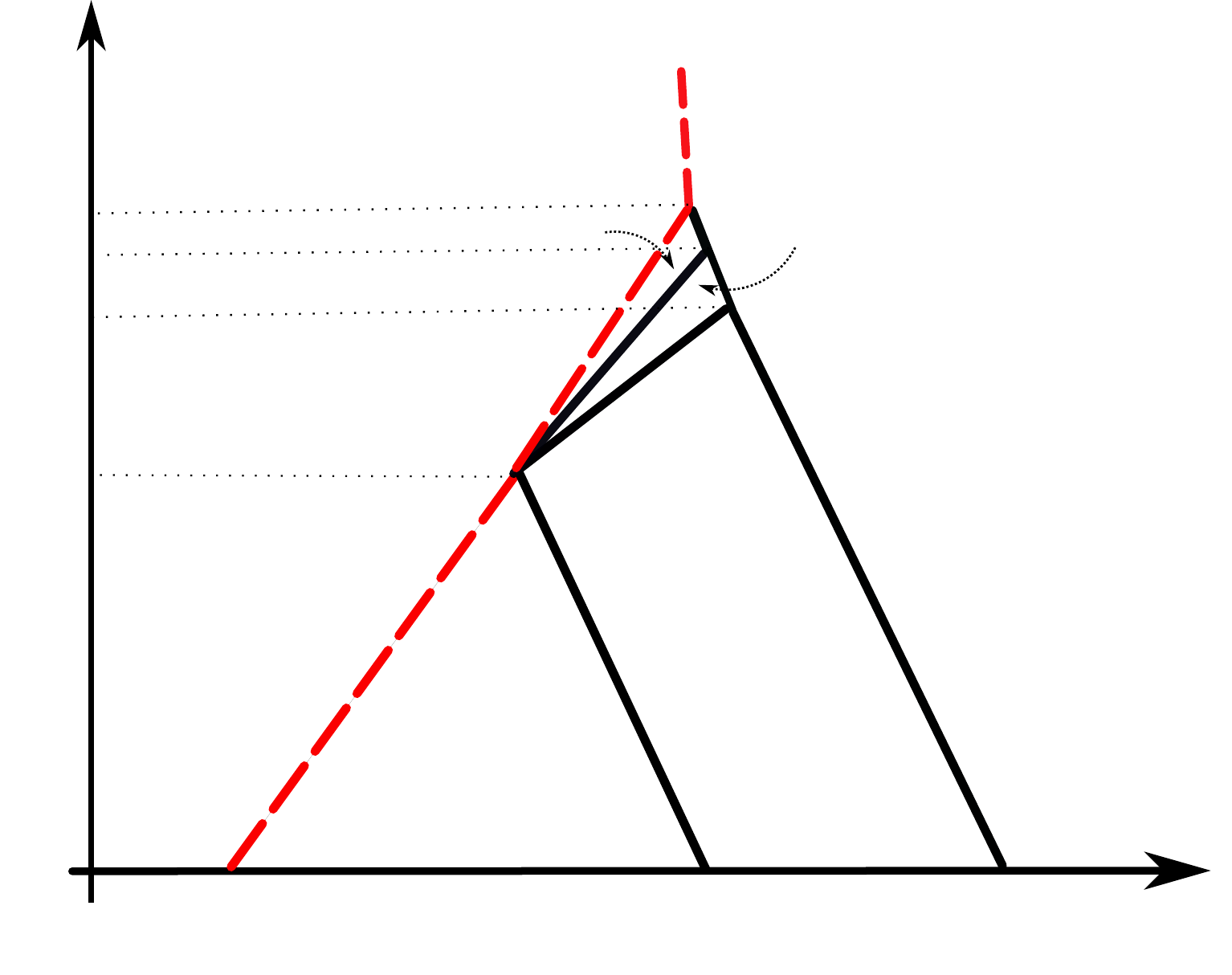
                \caption{wave curves $s\in ]0,1]$.}
                \label{f:wave_curve_survive}
        \end{subfigure}%
        \qquad\qquad 
        \begin{subfigure}[b]{0.40\textwidth}
                \def\svgwidth{180pt}
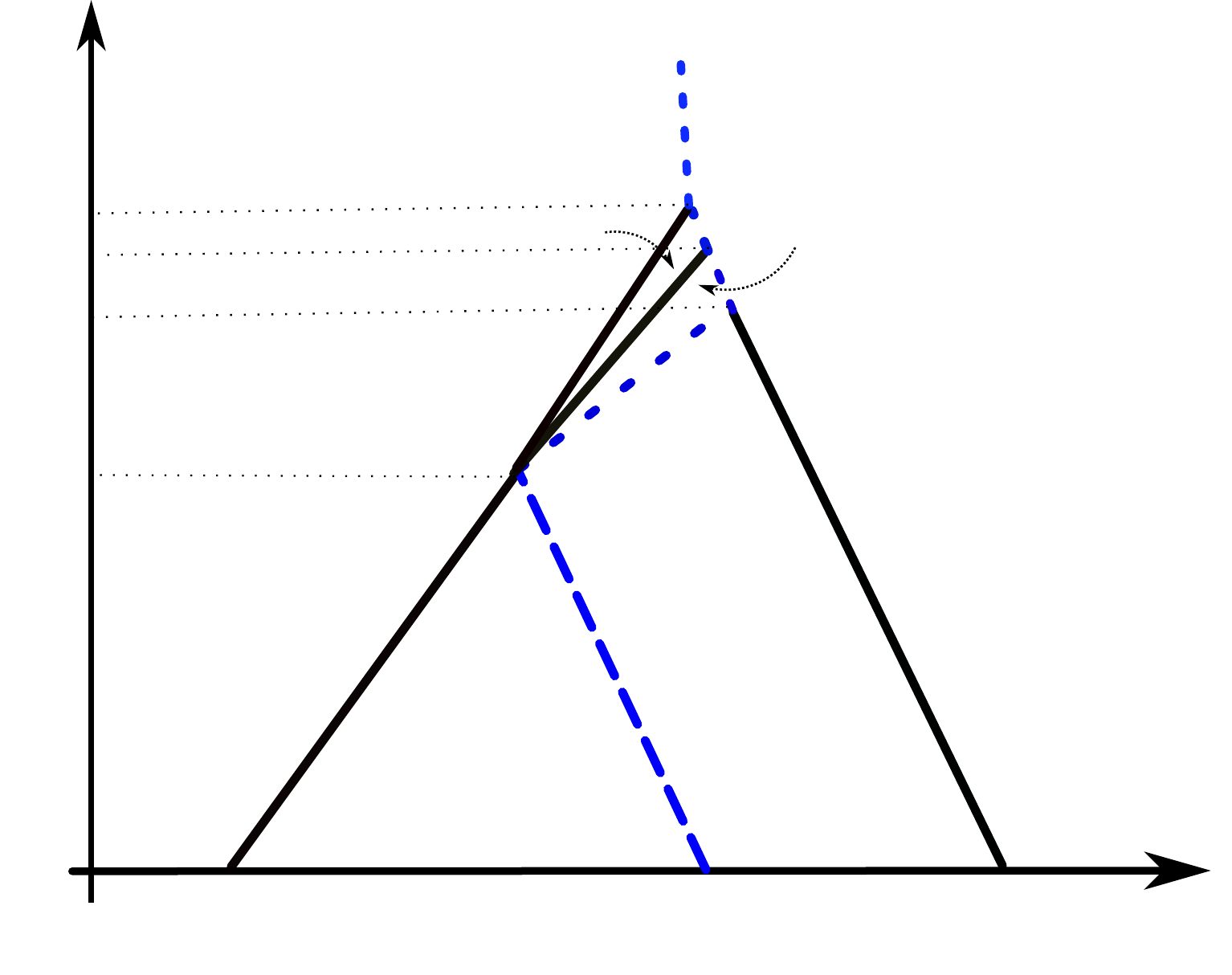            
    \caption{wave curves $s\in ]5,6]$.}
                \label{f:wave_curve_cancel}
        \end{subfigure}
               \caption{Example of the construction of wave curves}\label{f:wave_curve_approx}
\end{figure}

\begin{proposition}
\label{P_uniq_wave_repr}
For any compactly supported approximate solution $\unu(t)$ there exists a unique left continuous wave representation $(\mathtt X^\nu,\mathtt u^\nu,\mathtt a^\nu)$ satisfying conditions \eqref{Cond_1}, \eqref{Cond_2}, \eqref{Cond_3} above.
\end{proposition}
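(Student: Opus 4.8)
The plan is to build $(\mathtt X^\nu,\mathtt u^\nu,\mathtt a^\nu)$ explicitly by following the wavefronts of $\unu$ through the interaction/cancellation times and then to read the four conditions and the uniqueness off the construction; by Section \ref{s:cfta} only finitely many such times occur in any bounded time interval, and by finite speed of propagation $\unu(t)$ stays compactly supported. \emph{Initialization.} Write $\iunu$ with jumps at $x_1<\dots<x_p$, set $J^\nu:=(0,\tv{\iunu}]$, and split it from left to right into half-open intervals $I_1,\dots,I_p$ with $|I_i|=|\iunu(x_i+)-\iunu(x_i-)|$. For $s\in I_i$ put $\mathtt X^\nu(0,s):=x_i$, $\mathcal S^\nu(s):=\sgn(\iunu(x_i+)-\iunu(x_i-))$, and let $\mathtt u^\nu$ be the unique $1$-Lipschitz function on $J^\nu$ with $\frac{d}{ds}\mathtt u^\nu=\mathcal S^\nu$ a.e.\ equal to $\iunu(x_i-)$ at the left endpoint of $I_i$; since $\iunu$ is constant between consecutive jumps, $\mathtt u^\nu$ is continuous, and it is fixed once and for all (it cannot depend on $t$). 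Solving the Riemann problem at each $x_i$ with flux $\fnu$ gives finitely many outgoing fronts with pairwise distinct speeds, each a jump between two grid values whose value-intervals tile the interval spanned by the Riemann data; since the convex (resp.\ concave) envelope of $\fnu$ has increasing (resp.\ decreasing) derivative, the front speed is monotone in the carried value, with the sign of this monotonicity matching $\mathcal S^\nu$. Hence for small $t>0$ we set $\mathtt X^\nu(t,s):=x_i+\sigma t$, $\sigma$ the speed of the outgoing front whose value-interval contains $\mathtt u^\nu(s)$, and $\mathtt a^\nu(t,s):=\mathcal S^\nu(s)$; monotonicity of $\sigma$ in value and of $\mathtt u^\nu$ on $I_i$ makes $s\mapsto\mathtt X^\nu(t,s)$ nondecreasing. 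This defines the triple up to the first interaction time $t_1$.

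\emph{Inductive step over $t_1<t_2<\dots$.} The inductive hypothesis is that for every $t\le t_j$ the waves alive at $t$ (those with $\mathtt a^\nu(t,\cdot)\neq0$) sitting at a fixed jump point $x$ of $\unu(t,\cdot)$ form a \emph{consecutive} block of $s$'s on which $\mathtt u^\nu$ runs monotonically from $\unu(t,x-)$ to $\unu(t,x+)$. At $t_j$ two fronts collide at $x_j$, carrying adjacent blocks $A$ (left, values $u_L\to u_M$) and $B$ (right, values $u_M\to u_R$), so $A\cup B$ is one consecutive block on which $\mathtt u^\nu$ runs $u_L\to u_M\to u_R$. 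Solve the new Riemann problem with data $(u_L,u_R)$; its outgoing fan is again monotone in value and covers the interval spanned by $u_L,u_R$. Declare $s\in A\cup B$ cancelled at $t_j$ (set $\mathtt T^\nu(s):=t_j$) if $A\cup B$ contains an opposite-sign wave with the same $\mathtt u^\nu$-value: this pairs off, value by value, the overlapping opposite-sign parts of $A$ and $B$; the cancelled waves form an $s$-sub-block adjacent to the $A$–$B$ interface, and exactly $|u_L-u_M|+|u_M-u_R|-|u_L-u_R|$ of them are removed, which is precisely the total-variation drop. For every surviving $s$ set $\mathtt X^\nu(t,s):=x_j+\sigma(t-t_j)$ for $t\in(t_j,t_{j+1})$, $\sigma$ the speed of the outgoing front covering $\mathtt u^\nu(s)$, keeping $\mathtt a^\nu(t,s)=\mathcal S^\nu(s)$ there; $\mathcal S^\nu$ and $\mathtt u^\nu$ are untouched. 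Monotonicity of $\sigma$ in value (with matching sign), of $\mathtt u^\nu$ on the surviving block, and the fact that fronts of $\unu$ do not cross between collisions keep $s\mapsto\mathtt X^\nu(t,\cdot)$ nondecreasing; moreover the surviving waves on each outgoing front again form a consecutive block running monotonically over that front's value-interval (the cancelled sub-block returns in value to where it started, so $\mathtt u^\nu$ stays continuous), which restores the invariant. Iterating defines the triple for all $t\ge0$, with $\mathtt T^\nu(s)=+\infty$ for waves that are never cancelled.

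\emph{Verification and uniqueness.} Condition \eqref{Cond_1} and the formula for $E^\nu$ hold by definition of $\mathtt a^\nu,\mathcal S^\nu,\mathtt T^\nu$, and \eqref{Cond_3} because $\mathtt u^\nu$ is the fixed $1$-Lipschitz function with $\frac{d}{ds}\mathtt u^\nu=\mathcal S^\nu$ and the invariant says precisely that at a jump point $x$ the alive waves with parameter below $s$ accumulate $|a^\nu|$-mass starting from $\unu(t,x-)$. For \eqref{Cond_2}: at each $t$ the invariant gives, at every jump $x$ of $\unu(t,\cdot)$, a consecutive alive block with $|\mathtt a^\nu(t,\cdot)|\equiv1$, with $\L^1$-mass $|\unu(t,x+)-\unu(t,x-)|$ and signed mass $\unu(t,x+)-\unu(t,x-)$; since $\unu(t,\cdot)$ has finitely many jumps, pushing $a^\nu(t,\cdot)\L^1$ (resp.\ $|a^\nu(t,\cdot)|\L^1$) forward by $\mathtt X^\nu(t,\cdot)$ yields $D_x\unu(t)$ (resp.\ $|D_x\unu|(t)$), i.e.\ \eqref{Cond_2}(a)–(b), and $t\mapsto\mathtt X^\nu(t,s)$ is Lipschitz with constant $\le\Lambda$ by \eqref{E_Lambda_def}, being piecewise affine. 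Normalizing $s\mapsto\mathtt X^\nu(t,s)$ to be left continuous (in accordance with $\unu(t,\cdot)$ right continuous) and $t\mapsto\mathtt X^\nu(t,s)$ right continuous removes the remaining freedom. Uniqueness follows since \eqref{Cond_2}(b) forces $\mathtt X^\nu(t,\cdot)_\sharp(|a^\nu(t,\cdot)|\L^1)=|D_x\unu|(t)$, a fixed atomic measure; together with $\mathtt X^\nu(t,\cdot)$ nondecreasing and left continuous and $|a^\nu|\in\{0,1\}$ this pins down $\mathtt X^\nu(t,\cdot)$ and the support of $a^\nu(t,\cdot)$, then \eqref{Cond_2}(a) fixes the sign, \eqref{Cond_3} fixes $\mathtt u^\nu$, and \eqref{Cond_1} with right continuity in $t$ recovers $\mathcal S^\nu$ and $\mathtt T^\nu$.

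The main obstacle is the interaction step: one must verify that the value-by-value cancellation rule is consistent — that it removes exactly the total variation lost, keeps $\mathtt u^\nu$ a fixed continuous $1$-Lipschitz function, and leaves a set of surviving waves which, re-placed by value on the outgoing Riemann fan, still satisfies the "consecutive block / monotone value" invariant and the monotonicity of $s\mapsto\mathtt X^\nu(t,\cdot)$ — since this invariant is exactly what makes conditions \eqref{Cond_2}–\eqref{Cond_3} propagate across interactions. The general flux only lengthens the Riemann fans; because $\fnu$ is piecewise affine they stay finite with speed monotone in the carried value, which is all the argument uses.
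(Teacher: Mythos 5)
Your construction is correct and follows essentially the same route as the paper's own (sketched) proof: an explicit inductive definition over the collision times, with positions given by the Riemann-fan speeds of the piecewise affine flux, cancellation identified with the unique block of opposite-sign waves of equal value at a collision, and uniqueness forced by the Riemann problems. You simply fill in more of the details (the "consecutive block / monotone value" invariant and the value-by-value cancellation accounting) that the paper leaves implicit.
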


\begin{proof}
%
At $t=0$ it fairly easy to see that $\mathtt X^\nu(t=0)$, $\mathcal S^\nu$ are uniquely defined and
\begin{equation*}
\mathtt u^\nu(s) = \int_0^s \mathcal S^\nu(s') ds'.
\end{equation*}
In the interval $[0,t_1)$ where no interaction occurs, define
\begin{equation}
\label{E_tt_X_nu_sped_def}
\mathtt X^\nu(t,s) := \mathtt X^\nu(0,s) + \sigma^\nu(0,s) t,
\end{equation}
where $\sigma^\nu(0,s)$ is the speed obtained by solving the Riemann problem in the starting location of $s$, i.e.
\begin{equation}
\label{E_speed_RP}
\sigma^\nu(0,s) =
\begin{cases}
\Big( \frac{d}{du} \mathrm{conv}_{(\mathtt X^\nu)^{-1}(0,\mathtt X^\nu(0,s))} f \Big) \big( \mathtt u^\nu(s)-2^{-\nu},\mathtt u^\nu(s) \big) & \mathcal S^\nu(s) > 0, \\
\Big( \frac{d}{du} \mathrm{conc}_{(\mathtt X^\nu)^{-1}(0,\mathtt X^\nu(0,s))} f \Big) \big( \mathtt u^\nu(s),\mathtt u^\nu(s) + 2^{-\nu} \big) & \mathcal S^\nu(s) < 0. \\
\end{cases}
\end{equation}

In order to obtain the representation for every $t \geq 0$, it is thus enough to consider a Riemann problem for two colliding wavefronts at $\bar t$.

If the collision is among wavefronts of the same sign, then the only variation is the speed $\sigma$, and the same formula \eqref{E_tt_X_nu_sped_def} (replacing $\sigma(\bar t-,s)$ with the new speed of the wave exiting the collision point given by \eqref{E_speed_RP}) yields the continuation of $\mathtt X^\nu$ up to the next collision.

If the collision is a cancellation, by the assumption of binary collisions and that $\mathtt u^\nu$ is constant in time, there is a unique interval of waves such that $\mathtt T^\nu(s) = \bar t$. For the other waves the same continuation as in \eqref{E_tt_X_nu_sped_def} (changing the speed of the surviving waves $s$ by solving the Riemann problem at $\bar t$ with the speed given by \eqref{E_speed_RP}) allows to construct the triple till the next collision time.

It is fairly easy to show the validity of the conditions \eqref{Cond_1}, \eqref{Cond_2}, \eqref{Cond_3} after an interaction or a cancellation. The uniqueness follows from the fact that each Riemann problem at $\bar t$ yields a unique set where $\mathtt T^\nu =  \bar t$ and a unique trajectory of the surviving waves.
\end{proof}

\begin{remark}
\label{R_speed_not_exact}
We note that the slight modification of the speed of wavefronts needed in order to have binary collision implies a modification of \eqref{E_tt_X_nu_sped_def}. Since this modification can be chosen as small as we want, we will assume that the speed \eqref{E_tt_X_nu_sped_def} is the actual speed.
\end{remark}

Notice that as a corollary of the above construction it follows that outside the collision times $\{t_j\}_j$ it holds
\begin{equation*}
\sigma^\nu(t,s) := \frac{d}{dt} \mathtt X^\nu(t,s) =
\begin{cases}
\Big( \frac{d}{du} \mathrm{conv}_{(\mathtt X^\nu)^{-1}(t,\mathtt X^\nu(t,s))} f^\nu \Big) \big( \mathtt u^\nu(s)-\nu,\mathtt u^\nu(s) \big) & \mathcal S(s) > 0, \\
\Big( \frac{d}{du} \mathrm{conc}_{(\mathtt X^\nu)^{-1}(t,\mathtt X^\nu(t,s))} f^\nu \Big) \big( \mathtt u^\nu(s),\mathtt u^\nu(s) + \nu \big) & \mathcal S(s) < 0. \\
\end{cases}
\end{equation*}
The function $\sigma(t,s)$ will be called the \emph{speed of the wave $s$}. By assuming it to be $L^1$-right continuous in $t$ and left continuous in $s$ (in accordance with the proof of Proposition \ref{P_uniq_wave_repr}), $\sigma$ is pointwise defined.

Setting $\mathtt a^\nu = 0$ for $\tv{\iunu} < s \leq \tv{\iu}$, we can moreover assume that the interval of integration is always equal to
\begin{equation*}
J := (0,\tv{\iu}].
\end{equation*}
In the following, when not explicitly stated, the interval of integration w.r.t. $s$ will thus be $J$.

A direct computation yields the following

\begin{corollary}
\label{C_u_t_def_1}
It holds
\begin{equation*}
D_t \unu(t) = \mathtt X^\nu(t)_\sharp \big( - \sigma^\nu(t,s) \mathtt a(t,s) \mathcal L^1 \mres J \big) = \mathtt X^\nu(t)_\sharp \bigg( - \frac{d}{dt} \mathtt X^\nu(t) \mathtt a(t,s) \mathcal L^1 \mres J \bigg).
\end{equation*}
\end{corollary}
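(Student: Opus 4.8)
The plan is to start from the PDE $\unu_t = -f^\nu(\unu)_x$, which holds in the sense of distributions because $\unu$ is an exact weak solution of the conservation law with flux $f^\nu$ between collision times and the collisions form a locally finite set. Fixing a test function $\phi \in C^1_c(\R)$ and a time $t$ outside the collision set $\{t_j\}_j$, one has $\langle D_t\unu(t),\phi\rangle = \langle D_x(f^\nu(\unu(t))),-\phi\rangle = \int_\R f^\nu(\unu(t,x))\,\phi'(x)\,dx$, so it suffices to identify this last integral with $\int_J \phi(\mathtt X^\nu(t,s))\,\sigma^\nu(t,s)\,\mathtt a^\nu(t,s)\,ds$. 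Since $t \mapsto D_t\unu(t)$ and both sides are $L^1$-right continuous in $t$, establishing the identity off the collision times is enough.

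Next I would compute $f^\nu(\unu(t,\cdot))_x$ wavefront by wavefront. At a fixed time $t\notin\{t_j\}$ the function $x\mapsto \unu(t,x)$ is piecewise constant with finitely many jumps located at the distinct points of $\{\mathtt X^\nu(t,s) : \mathtt a^\nu(t,s)\neq 0\}$; at such a point $\bar x$ the jump of $f^\nu\circ\unu$ equals, by the Rankine--Hugoniot relation for the wavefront, $\sigma^\nu(t,s)$ times the jump of $\unu$, where by the representation the jump of $\unu$ at $\bar x$ is $\int_{\{s':\mathtt X^\nu(t,s')=\bar x\}}\mathtt a^\nu(t,s')\,ds'$ (condition \eqref{Cond_2}(a) applied via $D_x\unu(t)=\mathtt X^\nu(t)_\sharp(\mathtt a^\nu(t,\cdot)\L^1\mres J)$). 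Here one uses that within a single Riemann fan all waves $s$ sitting at the same location $\bar x$ at time $t$ have, between collisions, the common speed $\sigma^\nu(t,s)$ dictated by the convexification formula recorded just after Remark \ref{R_speed_not_exact} — i.e. $\sigma^\nu$ is constant on $\{s':\mathtt X^\nu(t,s')=\bar x\}$ — so that the factor $\sigma^\nu(t,s)$ can legitimately be pulled inside the $s$-integral. Summing over the finitely many jump locations and using that $f^\nu\circ\unu$ has no diffuse part gives
\begin{equation*}
\int_\R f^\nu(\unu(t,x))\,\phi'(x)\,dx = \sum_{\bar x} \phi(\bar x)\,\sigma^\nu(t,s_{\bar x})\!\!\int_{\{s':\mathtt X^\nu(t,s')=\bar x\}}\!\!\!\mathtt a^\nu(t,s')\,ds' = \int_J \phi(\mathtt X^\nu(t,s))\,\sigma^\nu(t,s)\,\mathtt a^\nu(t,s)\,ds,
\end{equation*}
which is exactly $\mathtt X^\nu(t)_\sharp(-\sigma^\nu(t,s)\mathtt a^\nu(t,s)\L^1\mres J)$ tested against $\phi$ after the sign from $f^\nu(\unu)_x = -\unu_t$. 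The second equality in the statement is immediate from $\sigma^\nu(t,s) = \frac{d}{dt}\mathtt X^\nu(t,s)$, which is \eqref{E_tt_X_nu_sped_def} together with Remark \ref{R_speed_not_exact}.

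The only genuinely delicate point is the Rankine--Hugoniot step: one must check that the piecewise-affine flux $f^\nu$, the convexification formula for $\sigma^\nu$, and the value assignment $\mathtt u^\nu$ are mutually consistent so that each wavefront really does move with the speed of the chord of $f^\nu$ over its value interval, and hence that the jump in $f^\nu\circ\unu$ across a composite front equals $\sigma^\nu$ times the jump in $\unu$. This is built into the wavefront tracking construction of Section \ref{s:cfta} and into the definition of $\sigma^\nu$ recalled after Remark \ref{R_speed_not_exact} (a single wavefront is exactly a piece of the convex/concave envelope, hence a chord of $f^\nu$), so it amounts to unwinding those definitions rather than proving anything new. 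Handling the collision times themselves needs no extra work since they are locally finite in $t$ and both sides are right continuous in time.
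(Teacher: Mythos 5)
Your proof is correct and follows essentially the same route as the paper: both reduce the identity to checking, at each discontinuity location $\bar x$, that $\int_{\{\mathtt X^\nu(t,\cdot)=\bar x\}}\sigma^\nu(t,s)\,\mathtt a^\nu(t,s)\,ds$ equals the jump of $f^\nu\circ u^\nu$ there, i.e.\ the mass of the Dirac delta of $D_x f^\nu(u^\nu(t))=-D_t u^\nu(t)$. The only cosmetic difference is that the paper carries out this computation at a collision point by integrating $\frac{d}{du}\mathrm{conv}\,f^\nu$ over the value interval of the whole Riemann fan (where the speeds need not be constant in $s$), whereas you work at non-collision times, use the constancy of $\sigma^\nu$ on each single front together with Rankine--Hugoniot, and recover the collision times by right continuity.
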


\begin{proof}
We need only to compute the above term for a Riemann problem: by inspection we have in a collision point $(t_j,x_j)$ and for positive waves
\begin{equation*}
\begin{split}
\int_{(\mathtt X^\nu)^{-1}(t_j,x_j)} \sigma^\nu(t,s) \mathtt a(t,s) ds =&~ \int_{u^\nu(t_j,x_j-)}^{u^\nu(t_j,x_j)} \bigg[ \frac{d}{du} \mathrm{conv}_{(\mathtt X^\nu)^{-1}(t,\mathtt X^\nu(t,s))} f^\nu(u) \bigg] du \\
=&~ f^\nu \big( u^\nu(t_j,x_j) \big) - f^\nu \big( u^\nu(t_j,x_j-) \big),
\end{split}
\end{equation*}
which is exactly the mass of the Dirac delta of $D_x f^\nu(u^\nu(t_j))$ at $x_j$.
\end{proof}

Since for $t \notin \{t_j\}_j$ it holds
\begin{equation*}
\big| \sigma(t,s) - \sigma(t,s') \big| = \bigg| \frac{d}{du} f(u) - \frac{d}{du} f(u') \bigg| \leq \|D^2 f\|_\infty |u - u'|
\end{equation*}
for particular values
\begin{equation*}
u \in (\mathtt X^\nu)^{-1}(t,\mathtt X^\nu(t,s)), \qquad u' \in (\mathtt X^\nu)^{-1}(t,\mathtt X^\nu(t,s')),
\end{equation*}
we conclude also that

\begin{lemma}
\label{L_bv_sigma_x}
Outside collision times, the function $s \mapsto \sigma(t,s)$ has Total Variation bounded by
\begin{equation*}
\tv{\sigma(t)} \leq \|D^2 f\|_\infty \tv{\iunu}.
\end{equation*}
\end{lemma}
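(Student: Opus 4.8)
The plan is to estimate the total variation of $s \mapsto \sigma(t,s)$ directly from the preceding pointwise bound. First I would fix a time $t \notin \{t_j\}_j$ and recall from the corollary preceding Lemma \ref{L_bv_sigma_x} that for such $t$ the speed $\sigma(t,s)$ is given by the derivative of a convex (or concave) envelope of $f$ evaluated at suitable points; in particular the displayed inequality just above the statement gives
\begin{equation*}
\big| \sigma(t,s) - \sigma(t,s') \big| \leq \|D^2 f\|_\infty |u - u'|
\end{equation*}
for appropriate representatives $u \in (\mathtt X^\nu)^{-1}(t,\mathtt X^\nu(t,s))$, $u' \in (\mathtt X^\nu)^{-1}(t,\mathtt X^\nu(t,s'))$. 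The key observation is that these representatives can be chosen \emph{monotonically}: since $s \mapsto \mathtt X^\nu(t,s)$ is increasing (Condition \eqref{Cond_2}) and, on each fibre $(\mathtt X^\nu)^{-1}(t,x)$, the value assigned to the wave $s$ ranges over an interval in a way compatible with $\mathtt u^\nu$ being $1$-Lipschitz and monotone in $s$ on that fibre, the map $s \mapsto u(s)$ entering the estimate can be taken nondecreasing in $s$. Concretely $u(s)$ is essentially $\mathtt u^\nu(s)$ (up to the $2^{-\nu}$ shift coming from \eqref{E_speed_RP}), which by Condition \eqref{Cond_3} satisfies $\frac{d}{ds}\mathtt u^\nu(s) = \mathcal S^\nu(s) \in \{-1,1\}$, hence is $1$-Lipschitz.

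Granting the monotone choice of $u(s)$, the estimate is a telescoping argument: for any partition $s_0 < s_1 < \dots < s_N$ of $J$,
\begin{equation*}
\sum_{k} \big| \sigma(t,s_{k+1}) - \sigma(t,s_k) \big| \leq \|D^2 f\|_\infty \sum_k \big| u(s_{k+1}) - u(s_k) \big| = \|D^2 f\|_\infty \tv{u(\cdot);J},
\end{equation*}
and since $u$ is monotone on each fibre and ranges inside $[\,0,\|u_0\|_\infty\,]$-type bounds with total variation controlled by the number of waves, one gets $\tv{u(\cdot);J} \leq \tv{\iunu}$ (indeed $u(\cdot)$ differs from $\mathtt u^\nu$, which is $1$-Lipschitz on $J = (0,\tv{\iunu}]$, so its total variation is at most the length of $J$, namely $\tv{\iunu}$). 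Taking the supremum over partitions yields $\tv{\sigma(t)} \leq \|D^2 f\|_\infty \tv{\iunu}$, which is the claim.

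The main obstacle, and the step requiring genuine care, is the \textbf{monotone selection of the representatives} $u(s)$ in the pointwise inequality: a priori the bound only asserts existence of \emph{some} $u, u'$ in the respective fibres, and a careless choice could produce backtracking that spoils the telescoping. To handle this cleanly I would argue fibre by fibre: on a single fibre $\{s : \mathtt X^\nu(t,s) = x\}$ the speed $\sigma(t,s)$ is constant in $s$ outside collision times when the fibre is a single jump, and when several waves of the same sign sit at $x$ the relevant envelope derivative $\frac{d}{du}\mathrm{conv}\, f^\nu$ or $\frac{d}{du}\mathrm{conc}\, f^\nu$ is itself monotone in its argument, so ordering the waves by $\mathtt u^\nu$ automatically orders the speeds; across fibres one concatenates using that $\mathtt X^\nu(t,\cdot)$ is increasing so the fibres are linearly ordered in $s$. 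One then identifies the variation of $s \mapsto u(s)$ with (a quantity bounded by) the variation of $\mathtt u^\nu$ on $J$, which is $\tv{\iunu}$ by the $1$-Lipschitz property of $\mathtt u^\nu$. The remaining verifications — that collision times form a discrete set and hence are irrelevant for the $s$-variation at fixed $t$, and that the constant $\|D^2 f\|_\infty$ (equivalently $\|D^2 f^\nu\|_\infty \leq \|D^2 f\|_\infty$ on the relevant range since $f^\nu$ interpolates $f$) is uniform in $\nu$ — are routine.
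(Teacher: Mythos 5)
Your proposal is correct and follows essentially the same route as the paper, which states the lemma as an immediate consequence of the displayed pointwise bound $|\sigma(t,s)-\sigma(t,s')| \leq \|D^2 f\|_\infty |u-u'|$ with $u,u'$ in the respective fibres, combined with the fact that the value map is $1$-Lipschitz on $J=(0,\tv{\iunu}]$. The paper leaves the telescoping and the consistent (monotone) selection of the representatives implicit; your fibre-by-fibre argument, using the monotonicity of the envelope derivative on each fibre and the ordering of the fibres by $\mathtt X^\nu(t,\cdot)$, is exactly the right way to fill in that step.
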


%

\begin{remark}
\label{R_dif_scalar}
Observe that in \cite{BM,BM-2} the variable $s$ takes values in $[0,u^\nu_0/\nu] \cap \N$ for the wavefront tracking approximation, while here we let it to assume values in a fixed interval. It is fairly easy to see that our definition is the natural extension obtained by taking $\mathtt X^\nu$ constant on intervals of the form $2^{-\nu} (k,k+1]$.
\end{remark}

\subsection{Uniform bound estimates on variation of speed}
\label{s:BM}

We next present a short proof of the following theorem: a refined estimate of this statement is given in \cite{BM}, where the interested reader can find a discussion about the importance of this result and more technical details about the proof.

\begin{theorem}
\label{T_quadratic}
The following quadratic type estimate holds:
\begin{equation}
\label{E_second_der_sig}
\int_J \tv{\sigma^\nu(s);[0,\mathtt T^\nu(s))} ds \leq 3 \|D^2 f\|_\infty (\tv{\unu})^2.
\end{equation}
\end{theorem}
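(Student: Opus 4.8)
The plan is to exploit that, by the construction in Proposition~\ref{P_uniq_wave_repr}, the map $t\mapsto\sigma^\nu(t,s)$ is constant on each interval between consecutive collision times $\{t_j\}_j$, so that
\[
\tv{\sigma^\nu(s);[0,\mathtt T^\nu(s))}=\sum_{j:\,t_j<\mathtt T^\nu(s)}\big|\sigma^\nu(t_j+,s)-\sigma^\nu(t_j-,s)\big|.
\]
Denoting by $A_j:=\{s:\ t_j<\mathtt T^\nu(s),\ \mathtt X^\nu(t_j-,s)=x_j\}$ the set of waves present at time $t_j$ and sitting at the (unique) collision point $x_j$ (note that the waves cancelled exactly at $t_j$ do not belong to $A_j$ and do not contribute to the sum above), Fubini's theorem yields
\[
\int_J\tv{\sigma^\nu(s);[0,\mathtt T^\nu(s))}\,ds=\sum_j\int_{A_j}\big|\sigma^\nu(t_j+,s)-\sigma^\nu(t_j-,s)\big|\,ds.
\]
It thus suffices to prove a per-collision estimate and then sum it against a Glimm-type interaction functional.

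Fix a binary collision at $(t_j,x_j)$, let $L_j$ and $R_j$ be the sets of waves carried by the left and right incoming wavefronts, and write $\ell_j:=\L^1(L_j)$, $r_j:=\L^1(R_j)$ for their strengths. Right after $t_j$ every surviving wave sits at $x_j$, so $\sigma^\nu(t_j+,s)$ is the slope at $\mathtt u^\nu(s)$ of the convex (resp.\ concave, according to $\mathcal S^\nu(s)$) envelope of $f^\nu$ over the merged range of values, while $\sigma^\nu(t_j-,s)$ is the analogous slope over the narrower range of the incoming front containing $s$. Comparing these two nested envelopes, only the waves whose value lies in the single ``new'' linear/rarefaction piece straddling the intermediate state $m_j$ can change speed; and since any such slope is a secant slope $\frac{f^\nu(v)-f^\nu(w)}{v-w}$ of $f^\nu$ (or a value of $(f^\nu)'$), and $(v,w)\mapsto\frac{f^\nu(v)-f^\nu(w)}{v-w}$ is Lipschitz in each variable with constant $\le\tfrac32\|D^2f\|_\infty$, the speed change of an affected wave is bounded by $\tfrac32\|D^2f\|_\infty$ times the displacement of the endpoints of its shock segment. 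A case analysis on the Riemann problem at $(t_j,x_j)$ — distinguishing whether the two incoming fronts have equal or opposite sign — then shows that the total such displacement, integrated over $A_j$, is of order $\ell_j r_j$, so that
\[
\int_{A_j}\big|\sigma^\nu(t_j+,s)-\sigma^\nu(t_j-,s)\big|\,ds\ \le\ C\,\|D^2f\|_\infty\,\ell_j r_j
\]
for an explicit constant $C$.

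To sum over $j$, I would introduce the Glimm interaction potential $\mathcal Q(t):=\sum|I_\alpha(t)|\,|I_\beta(t)|$, the sum running over all pairs of approaching wavefronts of $\unu(t)$. Then $0\le\mathcal Q(t)\le(\tv{\unu})^2$: indeed $\mathcal Q(0)\le(\sum_\alpha|I_\alpha(0)|)^2=(\tv{\unu})^2$, and $\mathcal Q$ cannot increase at an interaction (where, the total variation being locally conserved, the term $\ell_j r_j$ contributed by the colliding pair simply disappears), while it may increase at a cancellation with a gain bounded by $\tv{\unu}$ times the (nonnegative) drop of $\tv{\unu(t)}$ caused by that cancellation. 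Since $t\mapsto\tv{\unu(t)}$ is non-increasing and drops by at least $2^{1-\nu}$ at each cancellation (Section~\ref{s:cfta}), summing the relations $\mathcal Q(t_j+)-\mathcal Q(t_j-)\le-\ell_j r_j+\tv{\unu}\big(\tv{\unu(t_j-)}-\tv{\unu(t_j+)}\big)$ over all collisions gives $\sum_j\ell_j r_j\le\mathcal Q(0)+\tv{\unu}\cdot\tv{\unu(0)}\le 2(\tv{\unu})^2$. Combining this with the per-collision bound and keeping track of the constants yields
\[
\int_J\tv{\sigma^\nu(s);[0,\mathtt T^\nu(s))}\,ds\ \le\ 3\,\|D^2f\|_\infty\,(\tv{\unu})^2.
\]

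The main obstacle will be the per-collision analysis of the envelopes: one must describe precisely how the shock/rarefaction decomposition of the convex envelope changes when its interval of definition is enlarged (a linear piece can grow, and an interior point can switch between the linear and the rarefaction regime), identify exactly which waves feel a speed change, bound that change by the endpoint displacement, and sum these displacements to order $\ell_j r_j$ — separating genuine interactions from cancellations and tracking all constants so that the sharp value $3$ comes out. Once this local estimate is in hand, the summation against $\mathcal Q$ is routine; a refined version of this argument, with full details, is given in \cite{BM}.
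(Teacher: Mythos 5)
Your Step--0 decomposition of $\tv{\sigma^\nu(s);[0,\mathtt T^\nu(s))}$ into jumps at the collision times, the separate treatment of cancellations, and the per--collision bound of order $\|D^2f\|_\infty\,\ell_j r_j$ are all sound (and the last one is easy: for two same--sign fronts the outgoing front is a single shock and the integrated speed change equals $2\tfrac{(\sigma(w)-\sigma(w'))|w||w'|}{|w+w'|}\le 2\|D^2f\|_\infty |w||w'|$ by the mean value theorem). The genuine gap is in the summation. Your claim that the classical Glimm potential $\mathcal Q(t)=\sum_{\text{approaching}}|I_\alpha||I_\beta|$ ``cannot increase at an interaction'' is false for a non-convex flux: take three positive admissible fronts $w_1,w_2,w_3$ (left to right) with $\sigma_2<\sigma_3<\sigma_1$ and $|w_2|$ small; before the $w_1$--$w_2$ collision the pair $(w_2,w_3)$ is not approaching, but the merged front $w_{12}$ has speed $\sigma_{12}\approx\sigma_1>\sigma_3$, so $\mathcal Q(t_j+)-\mathcal Q(t_j-)=|w_2|\big(|w_3|-|w_1|\big)>0$ whenever $|w_3|>|w_1|$. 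More fundamentally, the quantity you are trying to bound, $\sum_j\ell_j r_j$ over all interactions, is \emph{not} controlled by $(\tv{\unu})^2$: after a cancellation with a third (arbitrarily small) front, a shock can be resolved into outgoing fronts with different speeds, so the same pair of waves can separate and re-collide many times, each re-collision contributing $\ell r$ again at negligible cost in total variation. This is exactly the obstruction that makes Theorem \ref{T_quadratic} nontrivial, and it is why no choice of constant $C$ in your per-collision estimate can be summed this way.

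The paper's proof circumvents this by working with pairs of \emph{Lagrangian waves} rather than pairs of wavefronts: the functional $\mathfrak Q(t)=\mathcal L^2(\mathcal N^\nu(t))$, where $\mathcal N^\nu(t)$ is the set of pairs $(s,s')$ that have never occupied the same position, is non-increasing by construction. The price is that one must prove a sharper local estimate: the speed change at an interaction is bounded not by $\|D^2f\|_\infty|w||w'|$ but by $\|D^2f\|_\infty\big(|\mathcal L_1||w'|+|\mathcal R_2||w|\big)=\|D^2f\|_\infty\big(\mathfrak Q(t_j-)-\mathfrak Q(t_j)\big)$, where $\mathcal L_1,\mathcal R_2$ consist only of the waves that have \emph{never} met the other front. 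This rests on the identity $f(\ell_0)=\conv_{[\ell_1,r_1]}f(\ell_0)$ at the junction between already-interacted and never-interacted waves (Step 4 of the paper, Propositions 2.12 and 3.21 of \cite{BM}), which encodes the memory that re-colliding waves must have nearly equal speeds. So the difficulty is located exactly where you declared it ``routine'': not in the envelope analysis of a single Riemann problem, but in replacing $\ell_j r_j$ by the decrease of a memory-carrying functional before summing over collisions.
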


\begin{proof}
The proof is based on the fact that, using the wave representation, we can distinguish among \emph{pair of waves which have already interacted $\mathcal I^\nu(t)$},
\begin{equation*}
\mathcal I^\nu(t) := \Big\{ (s,s') : s < s' \ \text{and} \ \exists\, t' \leq t \ \big( \mathtt X^\nu(t',s) = \mathtt X^\nu(t',s') \big) \Big\},
\end{equation*}
and \emph{pair of waves which have not yet interacted $\mathcal N^\nu(t)$},
\begin{equation*}
\mathcal N^\nu(t) := \Big\{ (s,s') : s < s' \ \text{and} \ \forall\, t' \leq t \ \big( \mathtt X^\nu(t',s) < \mathtt X^\nu(t',s') \big) \Big\}.
\end{equation*}
Clearly
\begin{equation*}
\mathcal I^\nu(t) \cup \mathcal N^\nu(t) = \big\{ (s,s') : \mathtt a^\nu(t,s), \mathtt a^\nu(t,s') \not= 0 \big\} = \big\{ s : \mathtt \mathtt T^\nu(s) > t \big\} \times \big\{ s : \mathtt \mathtt T^\nu(s) > t \big\}.
\end{equation*}

{\it Step 0.} If $\{(t_j,x_j)\}_j$ are the collision points, we can rewrite (\ref{E_second_der_sig}) as
\begin{equation*}
\begin{split}
\int_J \tv{\sigma^\nu(s);[0,\mathtt T^\nu(s))} ds =&~ \sum_{t_j} \int_{(\mathtt X^\nu)^{-1}(t_j,x_j)} \big| \mathtt a^\nu(t_j,s) \big| \big| \sigma^\nu(t_j,s) - \sigma^\nu(t_j-,s) \big| ds \\
=&~ \sum_{t_j \ \text{cancellation}} \int_{(\mathtt X^\nu)^{-1}(t_j,x_j)} \big| \mathtt a^\nu(t_j,s) \big| \big| \sigma^\nu(t_j,s) - \sigma^\nu(t_j-,s) \big| ds \\
&~ + \sum_{t_j \ \text{interaction}} \int_{(\mathtt X^\nu)^{-1}(t_j,x_j)} \big| \mathtt a^\nu(t_j,s) \big| \big| \sigma^\nu(t_j,s) - \sigma^\nu(t_j-,s) \big| ds.
\end{split}
\end{equation*}
We will estimate the two terms separately.

{\it Step 1.} If a cancellation occurs at $t_j$, then the standard estimate (see Proposition 2.15 of \cite{BM})
\begin{equation*}
\big\| \conv_{[0,b]} f \mres [0,a] - \conv_{[0,a]} f \big\|_{C^1([0,a])} \leq \|D^2 f\|_\infty (b - a), \qquad 0 \leq a \leq b,
\end{equation*}
yields that the variation in speed of the surviving waves is proportional to the cancellation $\mathcal C^\nu(t_j)$ occurring at $t_j$,
\begin{equation*}
\mathcal C^\nu(t_j) := \tv{\unu(t_j-)} - \tv{\unu(t_j)},
\end{equation*}
and thus 
\begin{equation*}
\begin{split}
\sum_{t_j \ \text{cancellation}}& \int_{(\mathtt X^\nu)^{-1}(t_j,x_j)} \big| \mathtt a^\nu(t_j,s) \big| \big| \sigma^\nu(t_j,s) - \sigma^\nu(t_j-,s) \big| ds \\
\leq&~ \|D^2 f\|_\infty \sum_{t_j \ \text{cancellation}} \bigg( \int_{(\mathtt X^\nu)^{-1}(t_j,x_j)} \big| \mathtt a^\nu(t_j,s) \big| ds \bigg) \Big( \tv{\unu(t_j-)} - \tv{\unu(t_j)} \Big) \\
\leq&~ \|D^2 f\|_\infty \sum_{t_j \ \text{cancellation}} \tv{\unu(t_j)} \Big( \tv{\unu(t_j-)} - \tv{\unu(t_j)} \Big) \\
\leq&~ \|D^2 f\|_\infty \tv{\iunu}^2.
\end{split}
\end{equation*}

{\it Step 2.} Define the functional
\begin{equation*}
\mathfrak Q(t) := \mathcal L^2(\mathcal N^\nu(t)).
\end{equation*}
It is easy to see that this functional is decreasing at any collision. 

A simple computation (based on the Rankine-Hugoniot formula of the speed on discontinuities) shows that at each interaction between the wavefronts $w,w'$, with $w$ on the left of $w'$, it holds
\begin{align*}
\int_{(\mathtt X^\nu)^{-1}(t_j,x_j)} \big| \mathtt a^\nu(t_j,s) \big| \big| \sigma^\nu(t_j,s) - \sigma^\nu(t_j-,s) \big| ds =&~ |w| \big( \sigma(w) - \sigma(w+w') \big) + |w'| \big( \sigma(w+w') - \sigma(w') \big) \\
=&~ 2 \frac{(\sigma(w) - \sigma(w')) |w||w'|}{|w+w'|},
\end{align*}
where for shortness we have used the notation $\sigma(w)$ for the speed of the wavefront $w$ and $|w|$ for its strength.

W.l.o.g. in the rest of the proof we will assume $w,w'$ positive, the other case being analogous.

{\it Step 3.} Let $(t_j,x_j)$ be an interaction point of the wavefronts $w,w'$, with $w$ on the left of $w'$: split these wavefronts into the regions
\begin{align*}
\mathcal L_1 :=&~ \Big\{ s \in w : s \ \text{has not interacted with any wave} \ s' \in w' \Big\}, \\
\mathcal L_2 :=&~ \Big\{ s \in w : s \ \text{has already interacted with some wave} \ s' \in w' \Big\}, \\
\mathcal R_1 :=&~ \Big\{ s' \in w' : s \ \text{has already interacted with some wave} \ s \in w \Big\}, \\
\mathcal R_2 :=&~ \Big\{ s' \in w' : s \ \text{has not interacted with any wave} \ s \in w \Big\}.
\end{align*}
A simple argument shows that if $s,s'$ have already interacted then every wave $p$ such that $s \leq p \leq s'$ has already interacted with $s$ and $s'$, so that all the above regions are of the form
\begin{equation*}
\mathcal L_2 = (\ell_1,\ell_0] \cap \big\{ \mathtt a(t_j,s) \not= 0 \big\}, \quad \mathcal R_1 = (\ell_0=r_0,r_1] \cap \big\{ \mathtt a(t_j,s) \not= 0 \big\}.
\end{equation*}

{\it Step 4.} The key observation is now the following:
\begin{equation*}
f(\ell_0 = r_0) = \conv_{[\ell_1,r_1]} f(\ell_0=r_0).
\end{equation*}
For the proof of this fact, we refer to \cite{BM} (see Propositions 2.12 and 3.21, in particular), but one can deduce it from the observation that $\ell_0$ have been split by a previous Riemann problem from the waves $(r_0,r_1]$, and the same can be said for $r_0+\delta$ and $(\ell_1,\ell_0]$ for all $\delta > 0$. \\
Using the above fact and observing that by elementary computations on the graph of the function $f$ one obtains
\begin{equation*}
\frac{(\sigma(w) - \sigma(w'))|w||w'|}{|w+w'|} = f(\ell_0) - \conv_{w \cup w'} f(\ell_0),
\end{equation*}
a simple geometric argument shows that
\begin{align*}
f(\ell_0) - \conv_{w \cup w'} f(\ell_0) \leq&~ \|D^2 f\|_\infty \big( |\mathcal L_1| |w'| + |\mathcal R_2| |w| \big) \\
=&~ \|D^2 f\|_\infty \big( \mathfrak Q(t_j-) - \mathfrak Q(t_j) \big).
\end{align*}

{\it Step 5.} Adding up these contributions and noticing that $\mathfrak Q(0) \leq \tv{\iunu}^2$ we obtain
\begin{equation*}
\begin{split}
\sum_{t_j \ \text{interaction}}& \int_{(\mathtt X^\nu)^{-1}(t_j,x_j)} \big| \mathtt a^\nu(t_j,s) \big| \big| \sigma^\nu(t_j,s) - \sigma^\nu(t_j-,s) \big| ds \\
\leq&~ 2 \sum_{\substack{t_j \ \text{interaction} \\ \text{involving} \ w_j,w_j'}} \frac{(\sigma(w_j) - \sigma(w'_j))|w_j||w'_j|}{|w_j + w'_j|} \\
\leq&~ 2 \sum_{t_j}  \|D^2 f\|_\infty \big( \mathfrak Q(t_j-) - \mathfrak Q(t_j) \big) \\
\leq&~ 2 \|D^2 f\|_\infty \tv{\iunu}^2,
\end{split}
\end{equation*}
which concludes the proof.
\end{proof}

By using the estimate of Lemma \ref{L_bv_sigma_x} we thus conclude that

\begin{proposition}
\label{P_BV_sigma}
The function $\sigma^\nu$ is BV in the region $\{s \in J, 0 \leq t < \mathtt \mathtt T^\nu(s) \}$.
\end{proposition}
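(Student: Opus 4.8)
The statement is in essence a corollary of the two variation estimates already in hand, and the plan is simply to combine them. The starting observation is that on the region $\Omega:=\{(t,s):s\in J,\ 0\le t<\mathtt T^\nu(s)\}$ the function $\sigma^\nu$ is piecewise constant with only finitely many pieces. Indeed, between two consecutive collision times no wavefront disappears, the wavefronts keep their relative order, and each travels at a constant speed (cf.\ \eqref{E_tt_X_nu_sped_def}); since there are only finitely many collision times $\{t_j\}$ (cf.\ Section~\ref{s:cfta}) and at every time the number of wavefronts is at most $2^\nu\tv{\iu}$ — each jump of $\unu(t)$ has size $\ge 2^{-\nu}$ and their sizes add up to $\tv{\unu(t)}\le\tv{\iu}$ — the set $\Omega$ is partitioned into finitely many rectangles of the form $I_k\times(t_j,t_{j+1})$, on each of which $\sigma^\nu$ is constant. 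Hence $\sigma^\nu$ is a finite sum of multiples of characteristic functions of sets of finite perimeter, so $D\sigma^\nu$ is a locally finite $\R^2$-valued measure carried by the (horizontal and vertical) interfaces between these rectangles, i.e.\ $\sigma^\nu\in\BV_{\loc}(\Omega)$. It then remains only to quantify the mass of $D\sigma^\nu$, and this is exactly where Theorem~\ref{T_quadratic} and Lemma~\ref{L_bv_sigma_x} enter.

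For the horizontal interfaces, which lie on the lines $t=t_j$ and carry the jumps of $t\mapsto\sigma^\nu(t,s)$, hence account for $D_t\sigma^\nu$, one integrates in $s$: for a.e.\ $s$ the map $t\mapsto\sigma^\nu(t,s)$ is piecewise constant with $\mathrm{Var}\big(\sigma^\nu(\cdot,s);[0,\mathtt T^\nu(s))\big)=\tv{\sigma^\nu(s);[0,\mathtt T^\nu(s))}$, and Theorem~\ref{T_quadratic} gives $|D_t\sigma^\nu|(\Omega)=\int_J\tv{\sigma^\nu(s);[0,\mathtt T^\nu(s))}\,ds\le 3\|D^2f\|_\infty\,\tv{\unu}^2<\infty$. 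For the vertical interfaces, which carry the jumps of $s\mapsto\sigma^\nu(t,s)$ and account for $D_s\sigma^\nu$, one uses Lemma~\ref{L_bv_sigma_x}: for every $t$ outside the finite set $\{t_j\}$ one has $\tv{\sigma^\nu(t)}\le\|D^2f\|_\infty\tv{\iunu}$, a bound uniform in $t$, so integrating over $t\in(0,T)$ yields $|D_s\sigma^\nu|(\Omega\cap\{t<T\})\le T\|D^2f\|_\infty\tv{\iunu}$ for each $T>0$. Together these exhibit $D\sigma^\nu$ as a locally finite measure, which is the claim. (Equivalently one could argue through the classical characterisation of BV functions by their one-dimensional sections, cf.\ \cite{AFP}: being $L^1_{\loc}$ with BV sections in both coordinate directions and transverse-summable sectional variations implies BV regularity; the two estimates above supply precisely the sectional variations.)

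Two remarks on the scope of the conclusion. First, $D\sigma^\nu$ is in general only \emph{locally} finite: since surviving shocks and rarefaction fans persist for all time while $\Omega$ is unbounded in $t$, the $s$-variation keeps accumulating, so one cannot expect $|D_s\sigma^\nu|(\Omega)<\infty$; only $|D_t\sigma^\nu|(\Omega)<\infty$ holds, by Theorem~\ref{T_quadratic}. Second, the time-section estimate of Lemma~\ref{L_bv_sigma_x} holds only off collision times, but these form an $\mathcal L^1$-negligible set meeting every bounded interval in finitely many points, so they cause no loss in the integration above. The only genuinely structural input — and the step I would be most careful about — is the reduction of $\Omega$ to a finite union of rectangles, i.e.\ the finiteness of the number of collisions together with the uniform bound $\tv{\unu(t)}\le\tv{\iu}$ on the number of wavefronts; both are recalled from the wavefront tracking construction in Section~\ref{s:cfta}, and everything else in the proof is a direct substitution of the two quoted estimates.
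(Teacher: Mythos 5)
Your proposal is correct and follows essentially the same route as the paper, which gives no explicit proof but clearly intends exactly this combination: Lemma \ref{L_bv_sigma_x} for the variation of the $s$-sections and Theorem \ref{T_quadratic} for the integrated variation of the $t$-sections, assembled via the sectional characterization of BV. Your added caveat that the conclusion is really a \emph{local} BV bound on the unbounded region $\{0\le t<\mathtt T^\nu(s)\}$ (since the $s$-variation does not decay in time) is a fair and worthwhile precision that the paper glosses over.
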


\subsection{Estimates on the level sets of the front tracking approximations}
\label{s:elsa}

Since we are considering a right continuous approximate wavefront solution $u^\nu(t)$ with compact support and taking values in $[0,M]$, it follows that for all $w \in [0,M]$ the level set $\{u^\nu > w\}$ is bounded by finitely many Lipschitz curves $\gamma^\nu_{j,w}$, $1 \leq j \leq N^\nu_w$.

%

More precisely, we can take the curves $t \mapsto \gamma^\nu_{j,w}(t)$ piecewise linear, with Lipschitz constant $\Lambda$ independent of $\nu$ and disjoint up to the time $\mathtt T^\nu_{j,w}$. 

It is clear 
that the number $N^\nu_w$ of curves is bounded by the initial datum through coarea formula,
\begin{equation*}
\int_0^M N^\nu_{w} dw = \tv{\iunu},
\end{equation*}
and by Lemma \ref{l:approxbv} we obtain the uniform bound
\begin{equation*}
N^\nu_{w} \leq N_w := \mathcal H^0(\partial \{u>w\}) \in L^1((0,M)).
\end{equation*}

In particular, up to a $\mathcal L^1$-negligible set in $[0,M]$, we have

\begin{lemma}
\label{L_comap_A_nu_iw}
The sets $\{u^\nu > w\}$ are precompact w.r.t. the Hausdorff distance up to a $\mathcal L^1$-negligible set of $w$, and every limit  $A_w$ is a set bounded by the graph of finitely many Lipschitz curves $\gamma_{j,w} : [0,T_{j,w}] \to \R$, $1 \leq j \leq N_w$.
%
\end{lemma}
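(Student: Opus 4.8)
The plan is to exploit the uniform structural bounds already established for the level sets of the wavefront approximations, namely that $\{u^\nu>w\}$ is bounded by at most $N_w$ piecewise-linear curves with a common Lipschitz constant $\Lambda$ (independent of $\nu$), together with the finite-speed-of-propagation confinement of the support to $\{|x|\le C+\Lambda t\}$. First I would fix a value $w$ in the full-measure subset of $[0,M]$ for which $N^\nu_w\le N_w<\infty$ for all $\nu$ and for which the initial datum $u_0$ has no atom of $|D\chi_{\{u_0>w\}}|$ at an endpoint. For each $\nu$, write the boundary $\partial\{u^\nu>w\}$ as the graphs of the curves $\gamma^\nu_{j,w}:[0,T^\nu_{j,w}]\to\R$, $1\le j\le N^\nu_w$, extended by convention (say) to be constant after $T^\nu_{j,w}$ or simply discarded; since $N^\nu_w$ is uniformly bounded, after passing to a subsequence I may assume $N^\nu_w\equiv N_w^*$ is constant in $\nu$.

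Next I would apply an Arzel\`a--Ascoli argument family by family. Each $\gamma^\nu_{j,w}$ is $\Lambda$-Lipschitz on its interval of existence and takes values in the compact set $\{|x|\le C+\Lambda T\}$ for $t\le T$; the existence times $T^\nu_{j,w}$ lie in $[0,\infty]$, so up to a further subsequence $T^\nu_{j,w}\to T_{j,w}\in[0,\infty]$. Extending each $\gamma^\nu_{j,w}$ to all of $[0,\infty)$ in a $\Lambda$-Lipschitz way (e.g. constant continuation past $T^\nu_{j,w}$), Arzel\`a--Ascoli with a diagonal extraction over compact time intervals yields uniform-on-compacts convergence $\gamma^\nu_{j,w}\to\gamma_{j,w}$ to a $\Lambda$-Lipschitz curve, and I restrict $\gamma_{j,w}$ to $[0,T_{j,w}]$. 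The Hausdorff precompactness of $\{u^\nu>w\}$ then follows because, on each compact time slab $[0,T]\times\R$, the sets $\{u^\nu>w\}\cap([0,T]\times\R)$ are contained in the fixed compact box and their boundaries lie in a $\delta$-neighborhood of $\bigcup_j\G(\gamma_{j,w})$ for $\nu$ large; one checks the two-sided Hausdorff inclusion between $\{u^\nu>w\}$ and the limit set $A_w:=\{(t,x): t\le T_{j,w}\text{ for the relevant }j,\ x\text{ between consecutive }\gamma_{j,w}(t)\}$ using the ordering $\gamma^\nu_{1,w}(t)<\gamma^\nu_{2,w}(t)<\cdots$ preserved in the limit as a non-strict ordering. (A global Hausdorff-precompactness statement then follows by localizing, i.e. intersecting with the compact boxes, since on each such box we have genuine compactness.)

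I then identify the limit more carefully: on a further full-measure set of $w$ (removing the $\L^1$-null set of values where the coarea identities fail) the $L^1_{\loc}$-convergence $u^\nu\to u$ forces $\chi_{\{u^\nu>w\}}\to\chi_{\{u>w\}}$ in $L^1_{\loc}$ along a subsequence for a.e. $w$, by a Fubini/coarea argument applied to $\int|u^\nu-u|\,dx\,dt=\int_0^M\L^2(\{u^\nu>w\}\triangle\{u>w\})\,dw$; combined with the Hausdorff convergence this pins $A_w$ down as a representative of $\{u>w\}$, with topological boundary contained in $\bigcup_j\G(\gamma_{j,w})$, which is exactly the structure claimed. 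Crucially the number $N_w$ of limit curves is still bounded by $\mathcal H^0(\partial\{u_0>w\})$ because $N^\nu_w\le N_w$ for every $\nu$ and this bound survives the limit.

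The main obstacle I anticipate is not the Arzel\`a--Ascoli compactness itself but the bookkeeping needed to make the family-by-family extraction legitimate and to handle curves whose existence time shrinks to $0$ or whose graphs coalesce in the limit (two approximate boundary curves merging, so that $\gamma_{j,w}\le\gamma_{j+1,w}$ with equality on a time interval): one must argue that such coalescence is harmless for describing $A_w$ as ``the region between finitely many Lipschitz curves'' and that the discarded-curve phenomenon only happens for a null set of $w$ or can be absorbed into the bound $N_w$. The other delicate point is the simultaneous extraction of a single subsequence that works for $\mathcal L^1$-a.e. $w$ at once; this is handled by a standard diagonal argument over a countable dense set of $w$ together with the $L^1$-in-$w$ integrability of $N_w$, deferring the finer ``simultaneous Hausdorff and $L^1$ convergence for a.e. $w$'' to the later section as the paper indicates. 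These are all routine once the uniform Lipschitz bound $\Lambda$ and the uniform count $N_w$ are in hand, both of which are supplied by the preceding discussion.
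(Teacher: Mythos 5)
Your proposal is correct and follows essentially the same route as the paper: the paper's own proof is a two-line sketch observing that $N^\nu_w$ stabilizes to $N_w$ for a.e.\ $w$ and that the uniformly Lipschitz curves $\gamma^\nu_{j,w}$ are compact by Arzel\`a--Ascoli, which is exactly the argument you spell out in detail. Your additional identification of the limit with $\{u>w\}$ via the coarea/Fubini argument is not needed for this lemma (the paper defers it to Lemma \ref{l:level_set_stable} and Theorem \ref{T_right_cts_u}) but is consistent with how the paper proceeds afterwards.
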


\begin{proof}
First of all one notices that by monotone convergence definitively $N^\nu_{w} = N_w = \mathcal H^0(\{u>w\}$ up to a $\mathcal L^1$-negligible set of values $w \in [0,M]$. The compactness of the curves $\gamma_{j,w}$, $j = 1,\dots,N_w$, is thus immediate (being uniformly Lipschitz), and the conclusion follows. 
\end{proof}

It is easy to see that the curves $\gamma_{j,w}$, $j=0,\dots,N_w$, are ordered:
\begin{equation*}
\gamma_{j,w}(t) \leq \gamma_{j+1,w}(t),
\end{equation*}
because the same holds for the approximating family $\gamma^\nu_{j,w}$, and moreover that if $n_x$ is the $x$ component of the outer normal vector in \eqref{e:cfss} then
\begin{equation*}
\mathrm{sign}(n_x) = (-1)^{1+j}.
\end{equation*}

For the wavefront approximation $\unu$ one can construct a map $\mathtt U$ between
\begin{equation*}
\mathrm L^\nu := \big\{ (j,w) : 1 \leq j \leq N^\nu_w \big\} \quad \text{and} \quad \mathrm{Graph}\,\mathcal S^\nu.
\end{equation*}
Define in fact
\begin{equation*}
s(j,w) = \tv{\iunu,(-\infty,\gamma^\nu_{j,\nu}(0))} + \big| w - u(\gamma^\nu_{j,w}(0)-) \big|, \quad \mathcal S^\nu(s) = (-1)^j.
\end{equation*}
If we equip the sets with the measures
\begin{equation*}
\mu^\nu := \sum_j \mathcal L^1 \otimes \delta_j, \qquad \omega^\nu := (\id,\mathcal S)_\sharp \mathcal L^1 \mres [0,\tv{u_0^\nu}],
\end{equation*}
respectively, then $\mathtt U$ is a measure preserving bijection out of finitely values $\{0\} \cup \N 2^{-\nu} \cap [0,M]$, whose inverse is
\begin{equation}
\label{E_inverse_sS_wi}
w = \int_0^s \mathcal S(s') ds', \quad j = \sharp \bigg\{ s' < s :  \int_0^{s'} \mathcal S^\nu(s'') ds'' =  \int_0^s \mathcal S^\nu(s'') ds'' \bigg\}.
\end{equation}
The proof of this fact is elementary due to the piecewise constant structure of $u_0^\nu$.

\section{Level sets of the entropy solution}
\label{S_level_1}

In this section we will extend the properties of the level sets for the approximate solutions $\unu(t)$ to the BV solution $u(t)$, showing that there is an $L^1$-representative of $u(t)$ (right continuous in $t$ w.r.t. the $L^1$-norm and right continuous in $x$) enjoying a particularly nice level set structure. As a consequence we will obtain a wave representation for the entropy solution $u(t)$.

Recall that we are restricting our analysis to some compact set $\bar \Omega$, and $u \in [0,M]$ has compact support contained in $\bar \Omega$.


\subsection{Convergence of level sets of front tracking approximations}
\label{s:clsfta}

First, we give an easy lemma about the convergence of the level sets for the approximate solutions $u^\nu$: this lemma is trivial, we give the proof for completeness and only in the case we are interested in.

\begin{lemma}
\label{l:level_set_stable}
Suppose that $v_n$ converges to $v$ as $n \to \infty$ in $L^1(\Omega)$. Then 
\begin{equation}
\label{E_level_set_conv}
\{ v_{n} > w \} \underset{n}{\longrightarrow} \{ v > w \} \quad \text{in} \ L^1(\Omega) 
\end{equation}
for all $w \in \R$ up to a countable set, and viceversa if the convergence \eqref{E_level_set_conv} holds for $\mathcal L^1$-a.e. $w \in [0,M]$ then $v_n \to v$ in $L^1(\Omega)$.

Moreover the values $w$ for which \eqref{E_level_set_conv} does not holds are the ones such that $\{v = w\}$ has positive $\mathcal L^2$-measure.
\end{lemma}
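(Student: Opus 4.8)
The plan is to reduce everything to the pointwise identity $\int_\Omega |v_n - v|\,dx = \int_{\R} \L^2\big(\{v_n > w\} \triangle \{v > w\}\big)\,dw$, which is the "layer cake" (Cavalieri/coarea for characteristic functions) representation of the $L^1$ distance. Indeed, writing $|v_n(x) - v(x)| = \int_{\R} \big| \chi_{\{v_n > w\}}(x) - \chi_{\{v > w\}}(x) \big|\,dw$ pointwise and integrating in $x$ with Fubini gives exactly this. From here the first implication is immediate: if $v_n \to v$ in $L^1$, the left side tends to $0$, so $g_n(w) := \L^2\big(\{v_n > w\} \triangle \{v > w\}\big) \to 0$ in $L^1(\R)$, hence along a subsequence for $\L^1$-a.e. $w$; but since we want convergence for \emph{all but countably many} $w$, I would instead argue by monotonicity. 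The key observation is that for fixed $v$, the map $w \mapsto \{v > w\}$ is monotone decreasing (in the sense of set inclusion, up to null sets) and is "continuous" at every $w$ such that $\L^2(\{v = w\}) = 0$; and only countably many $w$ can have $\L^2(\{v = w\}) > 0$, since these level sets are disjoint and $\Omega$ has finite measure. For such a "continuity point" $w$, one shows $\{v_n > w\} \to \{v > w\}$ in $L^1$ directly: pick $\e > 0$, choose $\delta$ with $\L^2(\{w - \delta < v \le w + \delta\}) < \e$ (possible by continuity at $w$), and note that $\{v_n > w\} \triangle \{v > w\} \subset \{|v - w| \le \delta\} \cup \{|v_n - v| > \delta\}$; the second set has measure at most $\delta^{-1}\|v_n - v\|_{L^1} \to 0$, so $\limsup_n g_n(w) \le \e$.

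For the converse, suppose \eqref{E_level_set_conv} holds for $\L^1$-a.e. $w \in [0,M]$, i.e. $g_n(w) \to 0$ for a.e. $w$. Since $v, v_n \in [0,M]$ with support in the fixed compact set $\bar\Omega$, we have $g_n(w) = 0$ for $w \notin [0,M]$ (both level sets being empty, resp. $\{|x| \le C + \Lambda t\}$ intersected appropriately — in any case bounded) and $g_n(w) \le \L^2(\bar\Omega) < \infty$ uniformly. Dominated convergence on the finite-measure interval $[0,M]$ then gives $\int_0^M g_n(w)\,dw \to 0$, which is exactly $\|v_n - v\|_{L^1} \to 0$ by the layer-cake identity. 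The last sentence of the statement — that the bad values of $w$ are precisely those with $\L^2(\{v = w\}) > 0$ — follows from the two halves: continuity points are good (shown above), and conversely if $\L^2(\{v = w_0\}) > 0$ one exhibits a sequence $v_n \to v$ in $L^1$ with $\{v_n > w_0\} \not\to \{v > w_0\}$, e.g. $v_n = v + \frac{1}{n}\,\mathrm{sgn}(\text{something})$ shifting the plateau across $w_0$; concretely $v_n := v - 2^{-n}\chi_{\{v = w_0\}}$ works, since then $\{v_n > w_0\} = \{v > w_0\}$ but $\{v_n \ge w_0\}$ loses the plateau — one must be slightly careful and instead take $v_n := v + 2^{-n}\chi_{\{v = w_0\}}$ so that $\{v_n > w_0\} = \{v \ge w_0\}$ differs from $\{v > w_0\}$ by the plateau, which has positive measure, while $\|v_n - v\|_{L^1} = 2^{-n}\L^2(\{v=w_0\}) \to 0$.

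The only genuinely delicate point is the uniformity needed to pass from "$g_n \to 0$ in $L^1(\R)$, hence a.e. along a subsequence" to "$g_n(w) \to 0$ for all but countably many $w$" in the \emph{first} implication — a subsequence argument alone gives a null set, not a countable set, and does not pin down \emph{which} $w$ are bad. This is exactly why I route the proof through the monotonicity/continuity-point argument rather than through $L^1(\R)$-convergence of $g_n$: the estimate $\{v_n > w\} \triangle \{v > w\} \subset \{|v - w| \le \delta\} \cup \{|v_n - v| > \delta\}$ is elementary and valid for every individual continuity point $w$, and there are only countably many non-continuity points because $\{\{v = w\}\}_{w}$ is an uncountable family of pairwise disjoint subsets of the finite-measure set $\bar\Omega$, so at most countably many can have positive measure. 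Everything else is Fubini and dominated convergence on a finite measure space, which I would not spell out in detail.
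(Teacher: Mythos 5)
Your proof is correct, and it shares the paper's backbone (the layer--cake/Fubini identity $\int_\Omega|v_n-v| = \int_\R \mathcal L^2\big(\{v_n>w\}\triangle\{v>w\}\big)\,dw$, which is exactly the paper's set $\Delta_n(w)$, and dominated convergence for the converse), but you handle the crucial ``all but countably many $w$'' claim by a genuinely different and arguably cleaner route. The paper first extracts a subsequence along which the symmetric differences vanish for $\mathcal L^1$-a.e.\ $w$, and then, to upgrade the exceptional null set to a countable one, runs a separate monotonicity argument: if some subsequence keeps $\mathcal L^2(\{v_{n''}>\bar w\}\setminus\{v>\bar w\})\ge\e$, then passing to a further a.e.-convergent subsequence and letting $w\nearrow\bar w$ forces $\mathcal L^2(\{v=\bar w\})\ge\e$. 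You instead fix any $w$ with $\mathcal L^2(\{v=w\})=0$ and prove convergence of the \emph{full} sequence of level sets directly from the inclusion $\{v_n>w\}\triangle\{v>w\}\subset\{|v-w|\le\delta\}\cup\{|v_n-v|>\delta\}$ together with Chebyshev and continuity of the measure as $\delta\searrow 0$; the countability of the bad set then comes for free from disjointness of the level sets $\{v=w\}$ in the finite-measure domain. This avoids subsequence extraction altogether and identifies the exceptional values in one stroke, at the cost of not literally exhibiting the Fubini mechanism the paper reuses; the paper's route, conversely, gets the a.e.\ statement with no work and only needs the monotonicity argument to sharpen it. Your final construction $v_n:=v+2^{-n}\chi_{\{v=w_0\}}$ showing that a positive-measure level set can actually fail to converge is not needed for the lemma as used (only the inclusion ``bad $\Rightarrow$ positive-measure level set'' is required, and that is the contrapositive of what you already proved), but it is a correct remark on sharpness.
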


\begin{proof}
Define
\begin{equation*}
\Delta_n(w) := \Big\{ (t,x) \in \Omega : v_n(t,x) > w \geq v(t,x) \ \text{or } v(t,x) > w \geq v_n(t,x) \Big\}, 
\end{equation*}
so that it holds
\[
|v_n-v|(t,x) = \intinf \chi_{\Delta_n(w)}(t,x) dw.
\]
Then by Fubini Theorem,
\begin{equation}
\label{E_Fubini_level_set}
\begin{split}
\int_\Omega|v_n-v|(t,x)dtdx =&~ \int_\Omega\left[\intinf \chi_{\Delta_n(w)}(t,x)dw\right]dtdx \\
=&~ \intinf \left[\int_\Omega\chi_{\Delta_n(w)}(t,x)dtdx\right]dw.
\end{split}
\end{equation}
This yields that, up to a subsequence $\{v_{n'}\}\subset\{v_n\}$, for $\mathcal L^1$-a.e. $w\in \R$,
\[
\int_\Omega\chi_{\Delta_{n'}(w)}(t,x)dtdx\longrightarrow 0 \quad \tas n'\to \infty,
\]
which gives the convergence of level sets in $L^1$-norm up to subsequences.

It is immediate to see that the validity of \eqref{E_level_set_conv} for $\mathcal L^1$-a.e. $w$ implies the $v_n \to v$ in $L^1$ because of \eqref{E_Fubini_level_set} and the finite measure of $\Omega$.

Assume now that that for a fixed $\bar w$ and for some subsequence $v_{n''}$ it holds
\begin{equation*}
\mathcal L^2 \Big( \{ v_{n''} > \bar w \} \setminus \{ v > \bar w \} \Big) \geq \epsilon > 0.
\end{equation*}
By again extracting a subsequence (not relabeled), the previous computation shows that the convergence \eqref{E_level_set_conv} holds $\mathcal L^1$-a.e. $w$, and thus 
\begin{equation*}
\mathcal L^2 \Big( \{ v > w \} \setminus \{v > \bar w\} \Big) = \lim_{n''} \mathcal L^2 \Big( \{ v_{n''} > w \} \setminus \{v > \bar w\} \Big) \geq \epsilon.
\end{equation*}
The monotonicity of the level set gives finally
\begin{equation*}
\mathcal L^2(\{v = \bar w\}) \geq \mathcal L^2 \bigg( \bigcap_{w \nearrow \bar w} \{v > w\} \setminus \{v > \bar w\} \bigg) \geq \epsilon > 0.
\end{equation*}
An analogous reasoning can be done for $\{v > \bar w\} \setminus \{v_n > \bar w\}$.

Thus the only level sets for which there can be a subsequence $\{v_n > w\}$ not converging are the level sets of positive Lebesgue measure, which are at most countable.
\end{proof}

We now use the compactness of the level sets given by Lemma \ref{L_comap_A_nu_iw} for the approximate solution $\unu$ in order to prove the following theorem.

\begin{theorem}
\label{T_right_cts_u}
If $u(t)$ is the right-continuous solution to \eqref{e:cauchy}, then up to a $\mathcal L^1$-negligible set $S \in \R$ the set $\{u > w\}$ is bounded by the graph of finitely many Lipschitz curves $\gamma_{j,w} : [0,T_{j,w}] \to \R$, $1 \leq j \leq N_w$, and moreover 
$\gamma_{j,w}(t) \leq \gamma_{j+1,w}(t)$, $j=1,\dots,N_w$ and $0 \leq t \leq \min\{T_{j,w},T_{j+1,w}\}$.
\end{theorem}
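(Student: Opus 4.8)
The strategy is to transfer the level-set structure of the approximations $\unu$ (established in Lemma \ref{L_comap_A_nu_iw}) to the entropy solution $u$ via the $L^1$-convergence $\unu(t) \to u(t)$ and the Hausdorff-compactness of the sets $\{\unu > w\}$. First I would fix the exceptional set: by Lemma \ref{l:level_set_stable}, outside a countable set of values $w$ (precisely those $w$ for which $\{u = w\}$ has positive $\mathcal{L}^2$-measure, together with the $\mathcal L^1$-negligible set of Lemma \ref{L_comap_A_nu_iw}), we have $\{\unu > w\} \to \{u > w\}$ in $L^1(\bar\Omega)$ along the full sequence. Call the union of these bad values $S$; it is $\mathcal L^1$-negligible. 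For $w \notin S$, we also know from Lemma \ref{L_comap_A_nu_iw} that, up to a subsequence, $\{\unu > w\}$ converges in Hausdorff distance to a set $A_w$ bounded by the graphs of finitely many Lipschitz curves $\gamma_{j,w} : [0,T_{j,w}] \to \R$, $1 \le j \le N_w$, with $\gamma_{j,w}(t) \le \gamma_{j+1,w}(t)$ inherited from the ordering of the $\gamma^\nu_{j,w}$.

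The key step is to reconcile the two limits: Hausdorff convergence of $\{\unu > w\}$ to $A_w$ and $L^1$ convergence of $\chi_{\{\unu > w\}}$ to $\chi_{\{u > w\}}$ force $A_w$ and $\{u > w\}$ to coincide up to an $\mathcal L^2$-null set. Indeed, on the closed complement of any neighborhood of $A_w$, $\unu$ is eventually $\le w$ for $\nu$ large (by Hausdorff convergence), so $\chi_{\{u > w\}} = 0$ there a.e.; hence $\{u > w\} \subseteq \overline{A_w}$ up to null sets, and similarly $\mathrm{Int}(A_w) \subseteq \{u > w\}$ up to null sets, using that $\unu$ takes only the value-set structure forcing $\unu > w$ on a neighborhood of interior points. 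Since $A_w$ is bounded by finitely many Lipschitz graphs, its topological boundary is $\mathcal L^2$-null, so $\{u > w\}$ differs from $A_w$ by a null set, and therefore — choosing the right-continuous-in-$x$ representative of $u(t)$ — the set $\{u(t) > w\}$ is, for each $t$, an interval-union whose endpoints are exactly the finitely many values $\gamma_{j,w}(t)$. This gives the desired structure.

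Finally I would eliminate the dependence on subsequences: since for every $w \notin S$ the Hausdorff limit is uniquely characterized by the $L^1$-limit $\{u > w\}$ (up to null sets, which pins down the finitely many boundary curves as the curves bounding $\{u>w\}$), the full sequence $\{\unu > w\}$ converges, and the $\gamma_{j,w}$ are intrinsically defined from $u$. The ordering $\gamma_{j,w}(t) \le \gamma_{j+1,w}(t)$ on $[0,\min\{T_{j,w},T_{j+1,w}\}]$ passes to the limit directly from the ordering of the approximating curves.

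The main obstacle I anticipate is the interchange between the two notions of convergence at the level of the boundary curves — in particular, showing that no ``extra'' boundary components appear or disappear in the limit (e.g. curves pinching together or a thin filament of $\{\unu>w\}$ collapsing to a set of zero width but nonzero Hausdorff limit), and that $\mathcal L^2(\{u > w\} \triangle A_w) = 0$ genuinely holds rather than just $\{u>w\}\subseteq \overline{A_w}$. This is controlled by the uniform bound $N^\nu_w \le N_w \in L^1((0,M))$ together with the fact that, for $w \notin S$, definitively $N^\nu_w = N_w$, so the number of curves stabilizes and the only degeneration possible is two Lipschitz graphs touching — which is harmless for the $\mathcal L^2$-identification and is exactly why the ordering is stated only up to the minimum of the existence times.
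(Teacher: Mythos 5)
Your proposal follows essentially the same route as the paper: use Lemma \ref{l:level_set_stable} for the $L^1$-convergence of the level sets, Lemma \ref{L_comap_A_nu_iw} for Hausdorff precompactness, identify the two limits (which is where the paper simply says the $L^1$-convergence to $A_w$ ``follows easily'' from the boundary being finitely many uniformly Lipschitz curves), and pass the ordering of the $\gamma^\nu_{j,w}$ to the limit. The one vaguely worded step --- that $\mathrm{Int}(A_w)\subseteq\{u>w\}$ up to null sets --- is indeed repaired exactly as you indicate, by the stabilization $N^\nu_w=N_w$ and the fact that $\{\unu>w\}$ is a union of strips between uniformly Lipschitz graphs, so the proposal is correct.
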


\begin{proof}
Lemma \ref{l:level_set_stable} gives that up to a countable set in $[0,M]$ the sets $\{\unu > w\}$ converge in $L^1$ to $\{u > w\}$.

By Lemma \ref{L_comap_A_nu_iw}, it follows that for $\mathcal L^1$-a.e. $w \in [0,M]$ there exists a subsequence $\nu'$ depending on $w$ such that the level set $\{\unu > w\}$ converges in Hausdorff distance to a set $A_w$ satisfying Lemma \ref{L_comap_A_nu_iw}. 

Since the boundary of the sets $A_w$ are union of finitely many uniformly Lipschitz curves, it follows easily that
\begin{equation*}
\{\unu > w\} \to A_w \quad \text{also in} \ L^1(\Omega),
\end{equation*}
for all subsequences, i.e. the $L^1$-limit of $A^\nu_{i,w}$ is independent on the subsequence.
%

Since the curves $\gamma^\nu_{j,w}$, $j=1,\dots,N_w$, are ordered, the same holds for their limits $\gamma_{j,w}$. 
\end{proof}

It is quite natural to reduce the time interval of existence $[0,T_{j,w}]$ of the curve $\gamma_{j,w}$ by roughly speaking assuming that in the interval $[0,T_{j,w})$ the curve is disjoint from all the others. The idea is that for entropy solutions the total variation decreases also at each level set.

Toward this result, we first prove that the number of disjoint curves is decreasing.

\begin{lemma}
\label{L_number_decre}
For $\mathcal L^1$-a.e $w \in [0,M]$ the following holds: if $\gamma_{j,w}(\bar t) = \gamma_{j',w}(\bar t)$, then we can assume that for all $t \geq \bar t$
\begin{equation*}
\gamma_{j,w}(t) = \gamma_{j',w}(t).
\end{equation*}
\end{lemma}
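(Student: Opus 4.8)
The plan is to transfer the corresponding statement for the wavefront approximations, where it is essentially a tautology, to the limit curves $\gamma_{j,w}$. For a fixed $\nu$ and a generic level $w$, the level set $\{u^\nu>w\}$ is bounded by the piecewise linear curves $\gamma^\nu_{j,w}$, and by the very construction of the wavefront algorithm two such curves that meet at some time $\bar t^\nu$ either exchange waves (interaction, in which case they merge: the whole portion of $\{u^\nu>w\}$ between them disappears) or stay together; in any case, once $\gamma^\nu_{j,w}$ and $\gamma^\nu_{j',w}$ coincide at a time they may be taken to coincide forever afterwards, since the boundary of $\{u^\nu>w\}$ can only lose components. Thus for the approximations the statement holds with $\bar t^\nu$ in place of $\bar t$, i.e. setting $\gamma^\nu_{j,w}(t):=\gamma^\nu_{j',w}(t)$ for $t\ge\bar t^\nu$ is consistent with the actual level set.

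Next I would pass to the limit. By Theorem \ref{T_right_cts_u} and Lemma \ref{L_comap_A_nu_iw}, for $\mathcal L^1$-a.e.\ $w\in[0,M]$ we have, along a subsequence $\nu'$, that $N^{\nu'}_w=N_w$ eventually, the curves $\gamma^{\nu'}_{j,w}$ converge uniformly on compact sets to $\gamma_{j,w}$ (they are uniformly Lipschitz with constant $\Lambda$), and $\{u^{\nu'}>w\}\to\{u>w\}$ in $L^1$. Suppose $\gamma_{j,w}(\bar t)=\gamma_{j',w}(\bar t)$ with $j<j'$. Since the intermediate curves are squeezed, $\gamma_{p,w}(\bar t)=\gamma_{j,w}(\bar t)$ for all $j\le p\le j'$, so it suffices to treat consecutive indices, say $j'=j+1$. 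Because $\gamma^{\nu'}_{j,w}\le\gamma^{\nu'}_{j+1,w}$ and both converge to the same value at $\bar t$, the quantity $\gamma^{\nu'}_{j+1,w}(\bar t)-\gamma^{\nu'}_{j,w}(\bar t)\to 0$. The region of $\{u^{\nu'}>w\}$ caught between these two curves has, at time $\bar t$, width going to $0$; I want to conclude that for $t>\bar t$ this region stays negligible in the limit, so that $\gamma_{j,w}(t)=\gamma_{j+1,w}(t)$ for $t\ge\bar t$.

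The mechanism for this is monotonicity of the level sets under the semigroup: if at some time the "tongue" of $\{u>w\}$ between $\gamma_{j,w}$ and $\gamma_{j+1,w}$ is degenerate (has empty interior at $\bar t$), it cannot re-open later, because the $x$-section $\{x:u(t,x)>w\}$ of the level set is a finite union of intervals whose number is nonincreasing in $t$ (this is the monotonicity/total-variation-decay argument alluded to before the lemma: for the entropy solution the comparison principle forces $\mathcal H^0(\partial\{u(t)>w\})$ to be nonincreasing in $t$ for a.e.\ $w$). Concretely: if the interval $(\gamma_{j,w}(\bar t),\gamma_{j+1,w}(\bar t))$ is empty, then at times $\bar t^{\nu'}$ slightly after $\bar t$ the corresponding wavefront picture at level $w$ for $u^{\nu'}$ has had the interaction or cancellation that kills that component (or it was already merged), so $\gamma^{\nu'}_{j,w}=\gamma^{\nu'}_{j+1,w}$ on $[\bar t^{\nu'},\infty)$, and taking limits gives $\gamma_{j,w}(t)=\gamma_{j+1,w}(t)$ for $t\ge\bar t$. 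There is a subtlety if the two limit curves touch at $\bar t$ but separate again for $t$ slightly larger in the limit; this is ruled out by the monotonicity of the number of components of the $x$-sections, which is the point I expect to require the most care to state cleanly (it is the same fact that will be promoted to Corollary \ref{C_wave_repr}).

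So the main obstacle is not the limiting argument itself — that is routine uniform-convergence bookkeeping — but justifying rigorously that "a level-set tongue, once closed, stays closed," i.e.\ that the number of connected components of $\{x:u(t,x)>w\}$ is a nonincreasing function of $t$ for $\mathcal L^1$-a.e.\ $w$. This follows from the $L^1$-contraction/comparison principle for entropy solutions (comparing $u$ with the entropy solution started from $\max\{u_0,w\}$ and with $\min\{u_0,w\}$, or directly from the monotonicity of the semigroup combined with the coarea formula and \eqref{e:approxbv}), together with the fact that for the approximations the count $N^\nu_w$ is manifestly nonincreasing at each interaction/cancellation time. Once this is in hand, the statement of the lemma — that the curves may be redefined to coincide after their first contact, enlarging nothing — is immediate.
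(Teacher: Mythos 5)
Your reduction to the wavefront approximations does not close the key step, and you have correctly identified where: the claim that ``a level-set tongue, once closed, stays closed.'' The problem is that uniform convergence $\gamma^{\nu'}_{j,w} \to \gamma_{j,w}$, $\gamma^{\nu'}_{j+1,w} \to \gamma_{j+1,w}$ with $\gamma_{j,w}(\bar t) = \gamma_{j+1,w}(\bar t)$ does \emph{not} force the approximate curves to touch at or near $\bar t$: for each $\nu'$ the approximate tongue may simply pinch to a small but positive width and then re-expand, with no interaction or cancellation ever occurring in $u^{\nu'}$. So the sentence ``at times $\bar t^{\nu'}$ slightly after $\bar t$ the corresponding wavefront picture \dots\ has had the interaction or cancellation that kills that component'' is unjustified, and the (true) monotonicity of $N^\nu_w(t)$ for the piecewise constant approximations transfers nothing to the limit. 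The fallback you invoke --- that the number of connected components of $\{x: u(t,x)>w\}$ is nonincreasing in $t$ for the entropy solution itself --- is exactly the content of the lemma in disguise, and it is asserted rather than proved: the $L^1$-contraction/comparison principle alone does not yield it in one line, and lower semicontinuity of the component count under $L^1$ convergence runs in the wrong direction precisely at the pinch time $\bar t$ (one only gets $N_w(\bar t) \leq \liminf_{\nu} N^\nu_w(\bar t)$, which is compatible with the count dropping at $\bar t$ and recovering afterwards).

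The paper avoids this entirely by an argument on the limit solution itself: it observes that if $u$ tends to the value $w$ on one side of a Lipschitz curve $\bar\gamma$, then replacing $u$ by the constant $w$ on that side again produces an entropy solution; applying this on both curves $\gamma_{j,w}$, $\gamma_{j+1,w}$ one truncates $u$ to the tongue between them, obtaining an entropy solution whose datum at time $\bar t$ is the constant $w$ (the tongue being degenerate there). Uniqueness of entropy solutions forces this truncation to remain $\equiv w$, so the tongue cannot re-open. If you want to keep your approximation-based strategy, the fix is not to use the original sequence $u^\nu$ but to restart the wavefront tracking at time $\bar t$ from an approximation of $u(\bar t)$ built via Lemma \ref{l:approxbv}, so that $\mathcal H^0(\partial\{u^\nu(\bar t) > w\})$ is controlled by $|D\chi_{\{u(\bar t)>w\}}|$, i.e.\ the restarted approximations already see the closed tongue; but as written the proposal has a genuine gap at its central step.
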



\begin{proof}
%
W.l.o.g. we can assume that $t=0$, and that $w$ is one of the values of the level sets for which Theorem \ref{T_right_cts_u} holds: the set $\partial \{u > w\}$ is the union of finitely many Lipschitz curves $\{\gamma_{j,w}\}_{j=1,\dots,N_w}$, which are uniform limit of the wavefront approximations. In particular the points $\{\gamma_{j,w}(0)\}_j$ are isolated. 

We first notice the following basic fact: if on a Lipschitz curve $\bar \gamma(t)$ it holds
\begin{equation*}
\lim_{x \searrow \bar \gamma(t)} u(t,x) = w,
\end{equation*}
then the function
\begin{equation*}
\tilde u(t,x) =
\begin{cases}
w & x \leq \bar \gamma(t), \\
u(t,x) & x > \bar \gamma(t),
\end{cases}
\end{equation*}
is an entropy solution. The proof is elementary, since on $\bar \gamma$ the function $\tilde u$ has no jumps, and it is entropic in the two open sets $\{x \gtrless \bar \gamma(t) \}$. The same holds if
\begin{equation*}
\lim_{x \nearrow \bar \gamma(t)} u(t,x) = w.
\end{equation*}
In this latter case one verifies that the jump conditions are satisfied on the curve $\bar \gamma(t)$.

Assume by contradiction that two curves $\gamma_{j,w}$, $\gamma_{j+1,w}$ start at $(0,0)$ with $\gamma_{j,w}(t) < \gamma_{j+1,w}(t)$ for $t > 0$. Then the function
\begin{equation*}
\hat u(t,x) :=
\begin{cases}
0 & x < \gamma_{j,w}(t), \\
u(t,x) & \gamma_{j,w}(t) \leq x < \gamma_{j+1,w}(t), \\
0 & x \geq \gamma_{j+1,w}(t),
\end{cases}
\end{equation*}
is an entropy solution with initial datum $\hat u(t=0) = w$, which forces $\hat u(t) = w$ ans $\gamma_{j,w} \equiv \gamma$. This concludes the proof.
\end{proof}

It is now clear that for the level sets $\{u > w\}$ made of finitely many curves we can define the time of existence $\mathtt T_{j,w}$ as follows: if in $(\bar t,\bar x)$ the curves $\gamma_{j,w}$, $j = N_1,\dots,N_2$, join, then set
\begin{equation*}
\mathtt T_{j,w} = \bar t \quad \text{for} \quad j = N_1,\dots,N_1 + 2 \bigg[ \frac{N_2 - N_1 + 1}{2} \bigg] - 1,
\end{equation*}
where $[a]$ is the integer part of $a$. We notice that our choice is not unique: in fact, we decide to prolong the rightmost curve if an odd number of curves meet in a point.

With this choice we obtain the next corollary.

\begin{corollary}
\label{C_wave_repr}
For $\mathcal L^1$-a.e. $w \in [0,M]$ the Lipschitz curves $\gamma_{j,w}$ of Theorem \ref{T_right_cts_u} can be restricted to an interval $[0,\mathtt T_{j,w})$ such that $\gamma_{j,w}(t) \not= \gamma_{j',w}(t)$ for all $0 \leq t \leq \min\{\mathtt T_{j,w},\mathtt T_{j',w}\}$. 
%
\end{corollary}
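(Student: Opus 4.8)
The statement is essentially a bookkeeping consequence of Theorem~\ref{T_right_cts_u}, of Lemma~\ref{L_number_decre}, and of the definition of $\mathtt T_{j,w}$ recalled just above: the plan is to convert the qualitative fact ``once two boundary curves of a level set meet they stay together'' into the quantitative disjointness of the restricted curves. First I would fix $w$ in the complement of the union of the two $\mathcal L^1$-negligible sets discarded by Theorem~\ref{T_right_cts_u} and by Lemma~\ref{L_number_decre}; this is still a full measure set of levels. For such $w$, $\partial\{u>w\}$ is the graph of the finitely many ordered Lipschitz curves $\gamma_{1,w}\le\dots\le\gamma_{N_w,w}$, the points $\gamma_{j,w}(0)$ are the (isolated, hence pairwise distinct) points of $\partial\{u_0>w\}$, and by Lemma~\ref{L_number_decre} the coincidence relation $\gamma_{j,w}(t)=\gamma_{j',w}(t)$ is monotone in $t$: if it holds at $\bar t$, it holds for all $t\ge\bar t$.

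The second step is to exploit finiteness. By the ordering, for each $t$ the set $\{p:\gamma_{p,w}(t)=\gamma_{j,w}(t)\}$ is an interval of consecutive indices, and by Lemma~\ref{L_number_decre} these intervals only grow with $t$; since $N_w<\infty$, the resulting partition of $\{1,\dots,N_w\}$ stabilizes outside finitely many times $0<\bar t_1<\dots<\bar t_m$. Going through these times in increasing order, at each $\bar t_\ell$ the still–alive (i.e.\ not yet assigned) curves meeting at a common spatial point form, again by the ordering, a block of consecutively indexed alive curves (possibly several disjoint such blocks at distinct points), and the definition recalled above sets $\mathtt T_{j,w}=\bar t_\ell$ for every curve of the block if the block has an even number of curves, and for all of them but the rightmost one if it has an odd number, the surviving curve being tracked further until it possibly enters a later block. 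In this way each $j$ receives a well defined value $\mathtt T_{j,w}\in(0,T_{j,w}]$ — equal to $\bar t_\ell$ if $\gamma_{j,w}$ is killed at $\bar t_\ell$, and to the original $T_{j,w}$ otherwise — it is strictly positive since $\bar t_1>0$ and $T_{j,w}>0$, the curve $\gamma_{j,w}$ is alive precisely on $[0,\mathtt T_{j,w})$, and at each block at most one curve survives.

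Finally I would prove disjointness by contradiction. Suppose $j<j'$ and $\gamma_{j,w}(t_0)=\gamma_{j',w}(t_0)$ for some $t_0$ with $0\le t_0<\min\{\mathtt T_{j,w},\mathtt T_{j',w}\}$, and take $t_0$ to be the first coincidence time of $\gamma_{j,w}$ and $\gamma_{j',w}$ (so that $t_0<\min\{\mathtt T_{j,w},\mathtt T_{j',w}\}$ still, by the choice of the pair). Since $t_0<\mathtt T_{j,w}$ and $t_0<\mathtt T_{j',w}$, both curves are alive on a left neighbourhood of $t_0$, and since any collision admits at most one survivor, no two alive curves with index in $[j,j']$ can have coincided strictly before $t_0$; hence $t_0$ is one of the times $\bar t_\ell$, the alive curves with index in $[j,j']$ — which all coincide at $t_0$ by the ordering, and in particular include $\gamma_{j,w}$ and $\gamma_{j',w}$ — form the block processed at $\bar t_\ell$, and by the previous step at least one of $\gamma_{j,w},\gamma_{j',w}$ is then assigned $\mathtt T=\bar t_\ell=t_0$, contradicting $t_0<\min\{\mathtt T_{j,w},\mathtt T_{j',w}\}$. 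Thus $\gamma_{j,w}$ and $\gamma_{j',w}$ are distinct on their common interval of definition $[0,\min\{\mathtt T_{j,w},\mathtt T_{j',w}\})$, which is the claim. The only point requiring care is exactly this bookkeeping of collisions — that merges occurring at the same time but at different spatial points are independent, that the curve prolonged past an odd merge is the one followed afterwards, and that the even/odd alternative is exhaustive — all of which is routine once Lemma~\ref{L_number_decre} is available; and the choice of prolonging the \emph{rightmost} curve is immaterial for the disjointness, any rule leaving exactly one survivor in odd blocks and none in even blocks giving the same conclusion (it only serves to make $\mathtt T_{j,w}$ unambiguous).
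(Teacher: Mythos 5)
Your proof is correct and follows the same route as the paper, which in fact offers no explicit proof: it simply defines $\mathtt T_{j,w}$ by killing all but (in the odd case) the rightmost curve at each merge point and asserts that the corollary follows from this choice together with Lemma \ref{L_number_decre}. Your write-up merely makes explicit the routine bookkeeping (finitely many merge times, blocks of consecutive alive indices, first-coincidence contradiction) that the paper leaves implicit, including the sensible reading that only not-yet-killed curves count toward the parity of a block.
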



%
%
%
%
%
%
%
%

\subsection{Wave representation of the entropy solution}
\label{Ss_lagr_repr_real_u}

We now reparametrize the curves $\gamma_{j,w}$ in order to obtain a Lagrangian representation, i.e. a wave representation for the entropy solution. This has to be done at $t=0$, because the parametrization is independent of time for the wave representation as well as for the level set description of Theorem \ref{T_right_cts_u}.

Define the maps
\begin{equation}
\label{E_mathtt_s_def}
\begin{split}
\mathtt s(j,w) :=&~ \sum_{j'} \int \chi_{\gamma_{j',w'}(0) < \gamma_{j,w}(0)}(w') dw' + \sum_{j'} \int_{\gamma_{j',w'}(0) = \gamma_{j,w}(0)} \chi_{(-1)^j(w - w') < 0}(w') dw' \\
=&~ \tv{u_0,(-\infty,\gamma_{j,w})(0)} + \sum_{j'} \int_{\gamma_{j',w'}(0) = \gamma_{j,w}(0)} \chi_{(-1)^j(w - w') < 0}(w') dw' \crcr
=&~ \tv{u_0,(-\infty,\gamma_{j,w})(0)} + \big| w - u_0(\gamma_{j,w}(0)-) \big|,
\end{split}
\end{equation}
and consider
\begin{equation*}
D := \big\{ (j,w) : 1 \leq j \leq N_w \big\}, \quad \mu := \sum_j \delta_j \times \mathcal L^1 \mres D(j).
\end{equation*}
We have used the multifunction notation, i.e. $D(j) = \big\{ w : (j,w) \in D \big\}$. The map $\mathtt s$ has domain $D$ and range $J = (0,\tv{u_0}]$.

\begin{proposition}
\label{P_map_s_jw}
The map $\mathtt s$ is invertible up to a $\mathcal L^1$-negligible subset of $J =( 0,\tv{u_0}]$ and defines a measure preserving bijection.
\end{proposition}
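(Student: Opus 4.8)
The plan is to show that $\mathtt s$ is (up to null sets) a bijection between $D$ and $J$ which pushes $\mu$ forward to $\mathcal L^1 \mres J$; this is essentially the continuous analogue of the elementary statement at the end of Section \ref{s:elsa} for $\mathtt U$, and I would prove it by exhibiting the inverse explicitly and checking measure preservation on intervals. First I would fix a value $w \in [0,M]$ in the full-measure set $S$ of Theorem \ref{T_right_cts_u} (and Corollary \ref{C_wave_repr}), so that $\{u>w\}$ is bounded by the finitely many ordered Lipschitz curves $\gamma_{1,w},\dots,\gamma_{N_w,w}$ and the points $\gamma_{j,w}(0)$ are the (finitely many) jump points of $u_0$ at level $w$, with $\mathrm{sign}$ of the $x$-derivative equal to $(-1)^{1+j}$ there. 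Reading the third line of \eqref{E_mathtt_s_def}, for each fixed $j$ the map $w \mapsto \mathtt s(j,w)$ is, on each maximal interval of levels $w$ for which $\gamma_{j,w}(0)$ equals a fixed jump point $\bar x$ of $u_0$, just an affine function of $w$ with slope $\pm 1$ (slope $(-1)^{1+j}\cdot(\mathrm{sign})$ chosen so that $\mathtt s$ increases), plus the constant $\tv{u_0,(-\infty,\bar x)}$; hence on each such interval it is an isometry onto a subinterval of $J$.

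The key combinatorial point, which I would state as a lemma, is that the images $\mathtt s(\{j\}\times D(j))$, as $(j,w)$ ranges over $D$, tile $J=(0,\tv{u_0}]$ up to overlaps of $\mathcal L^1$-measure zero. This is exactly the coarea-type bookkeeping: by the coarea formula $\int_0^M N_w\,dw = \tv{u_0}$, so $\mu(D) = \mathcal L^1(J)$; and by construction $\mathtt s(j,w)$ records the total variation of $u_0$ accumulated strictly to the left of $\gamma_{j,w}(0)$ plus the partial variation $|w - u_0(\gamma_{j,w}(0)-)|$ contributed at the point $\gamma_{j,w}(0)$ by the jump of $u_0$ there. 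Since the variation of $u_0$ decomposes (for right-continuous $u_0$) as an absolutely continuous/Cantor part plus the atomic part at its jump points, and since $u_0$ is the $L^1$-limit with $\tv{u_0^\nu}\le\tv{u_0}$ and the level sets converge, the ranges of $\mathtt s(j,\cdot)$ corresponding to distinct pairs $(j,\bar x)$ are pairwise disjoint intervals whose lengths are the masses of the corresponding pieces of $|Du_0|$, and these masses sum to $\tv{u_0}$. Thus $\mathtt s$ is injective off a null set and surjective off a null set. I would make the inverse explicit, mirroring \eqref{E_inverse_sS_wi}: given $s\in J$, set $\mathcal S(s) := \mathrm{sign}\,\frac{d}{ds}\mathtt u^\nu$-analogue, $w(s) = \int_0^s \mathcal S(s')\,ds'$ where $\mathcal S$ is the sign pattern read off from the ordering of the curves, and $j(s) = \#\{s' < s : w(s') = w(s)\}+1$; one checks $\mathtt s(j(s),w(s)) = s$ a.e.

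Finally, measure preservation: since on each of the finitely many pieces $\mathtt s$ is an affine isometry of an interval onto an interval, $\mathtt s_\sharp(\mu) = \mathcal L^1 \mres J$ follows by additivity over the pieces, using that the pieces partition $D$ (up to $\mu$-null sets) and their images partition $J$ (up to $\mathcal L^1$-null sets). I expect the main obstacle to be a clean justification of the claim that the images of the pieces are \emph{pairwise disjoint} and \emph{exhaust} $J$: this requires knowing that the level-set curves $\gamma_{j,w}(0)$, as $(j,w)$ varies, run through the points of $\partial^* E_w(u_0)$ in a way compatible with the coarea formula \eqref{e:cfss}, i.e. that no variation of $u_0$ is lost or double-counted. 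The cleanest route is to invoke the coarea formula directly: for each fixed $j$ and each interval of $w$'s on which $\gamma_{j,w}(0)\equiv\bar x$, the contribution to $\mu$ is the length of that $w$-interval, while $\mathtt s$ maps it isometrically, and summing over $j$ and over the jump points recovers $\int_0^M N_w\,dw = \tv{u_0} = \mathcal L^1(J)$, leaving no room for overlap. Everything else is the routine verification already sketched in the piecewise-constant case before \eqref{E_inverse_sS_wi}.
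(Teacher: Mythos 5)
Your overall strategy --- show that $\mathtt s$ pushes $\mu$ forward to $\mathcal L^1\mres J$ and read off a.e.\ invertibility --- is the same as the paper's, but the way you try to establish the pushforward has a gap. Your decomposition of $D$ into ``maximal intervals of levels $w$ for which $\gamma_{j,w}(0)$ equals a fixed jump point $\bar x$ of $u_0$'' only accounts for the atomic part of $Du_0$: on the diffuse (absolutely continuous and Cantor) part the starting points $\gamma_{j,w}(0)$ move with $w$, the index $j$ itself changes as $N_w$ varies, and $w\mapsto\mathtt s(j,w)$ is an isometry only where $u_0$ is monotone between consecutive crossings --- where $u_0$ oscillates, the increment of $\mathtt s$ strictly exceeds $|w'-w|$, the excess being exactly the $\mu$-mass of the pieces interleaved in between, so the ``affine isometric pieces'' do not tile $D$. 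Moreover the pairwise disjointness of the images is asserted rather than proved: the counting argument $\sum(\text{lengths})=\tv{u_0}=\mathcal L^1(J)$ cannot by itself exclude the possibility that the images both overlap on a set of positive measure \emph{and} fail to exhaust $J$ (take two copies of $(0,1]$ inside $(0,2]$); one needs the monotonicity of $\mathtt s$ with respect to the ordering of the starting points, which you never actually establish.

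The paper avoids the tiling altogether with two short global computations. For injectivity, $\mathtt s(j,w)=\mathtt s(j',w')$ (with, say, $\gamma_{j,w}(0)\le\gamma_{j',w'}(0)$) is equivalent to
\[
\big| w - u_0(\gamma_{j,w}(0)) \big| + \tv{u_0,(\gamma_{j,w}(0),\gamma_{j',w'}(0))} + \big| w' - u_0(\gamma_{j',w'}(0)-) \big| = 0,
\]
which forces $w=w'$ and zero variation in between; this degenerate situation occurs only for the countably many levels $w$ with $\mathcal L^1(\{u_0=w\})>0$. For measure preservation, the coarea formula gives the increment identity
\[
\mathtt s(j',w')-\mathtt s(j,w)=\mu\Big(\mathtt s^{-1}\big( (\mathtt s(j,w),\mathtt s(j',w')) \big)\Big)
\]
for \emph{arbitrary} pairs, which together with $\sup\mathtt s=\tv{u_0}$ is precisely the statement $\mathtt s_\sharp\mu=\mathcal L^1\mres J$; the disjointness and exhaustion you identify as the main obstacle are built into this single identity. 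To repair your argument, replace the piecewise-isometry picture by this increment computation; your explicit inverse formula then goes through unchanged.
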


\begin{proof}
First of all, $\mathtt s(j,w) = \mathtt s(j',w')$ if and only if
\begin{equation*}
\begin{split}
& \big| w - u_0(\gamma_{j,w}(0)) \big| + \tv{u_0,(\gamma_{j,w}(0),\gamma_{j',w'}(0))} + \big| w' - u_0(\gamma_{j',w'}(0)-) \big| = 0,
\end{split}
\end{equation*}
assuming for definiteness $\gamma_{j,w}(0) \leq \gamma_{j',w'}(0)$. Hence we conclude that
\begin{equation*}
w = w' \quad \text{and} \quad \tv{u_0,(\gamma_{j,w}(0),\gamma_{j',w}(0))} = 0.
\end{equation*}
Thus either $j = j'$ or $w$ is one of the at most countable set of values for which $\mathcal L^1(\{u = w\}) > 0$. We thus conclude that up to a countable set $\mathtt s$ is injective. Moreover, b

By coarea formula and the definition of $\mathtt s$
\begin{equation*}
\begin{split}
\mathtt s(j',w') - \mathtt s(j,w) =&~ \big| w - u_0(\gamma_{j,w}(0)) \big| + \tv{u_0,(\gamma_{j,w}(0),\gamma_{j',w'}(0))} + \big| w' - u_0(\gamma_{j',w'}(0)-) \big| \crcr
=&~ \mu \bigg( \Big\{ (j'',w''), \gamma_{j,w}(0) < \gamma_{j'',w''}(0) < \gamma_{j',w'}(0) \Big\} \crcr
&~ \qquad \bigcup \Big\{ (j'',w''): \gamma_{j'',w''}(0) = \gamma_{j,w}(0) \ \text{and} \ (-1)^{j}(w - w'') < 0 \Big\} \crcr
&~ \qquad \bigcup \Big\{ (j'',w''): \gamma_{j'',w''}(0) = \gamma_{j',w'}(0) \ \text{and} \ (-1)^{j'}(w' - w'') < 0 \Big\} \bigg) \crcr
=&~ \mu \big( \mathtt s^{-1} \big( \mathtt s(j,w),\mathtt s(j',w') \big) \big).
\end{split}
\end{equation*}
Since
\begin{equation*}
\sup_{j,w} \mathtt s(j,w) = \tv{u_0},
\end{equation*}
the above computation shows that $\mathtt s_\sharp \mu = \mathcal L^1 \mres J$, and thus $\mathtt s$ is invertible $\mathcal L^1$-a.e. and the proof is complete.
\end{proof}

Now we naturally extend the \emph{wave representation} of an entropy solution $u$.

\begin{definition}
\label{D_wave_repres}
A wave representation is a triple of functions $(\mathtt X,\mathtt u,\mathtt a)$,
%
\begin{equation*}
\begin{array}{rclll}
\mathtt X &:& \rp \times (0,\tv{u_0}] \supset E \to \R, & & \text{the \emph{position of the wave $s$}}, \\
\mathtt u &:& (0,\tv{u_0}] \to \R, & & \text{the \emph{value of the wave $s$}}, \\
\mathtt a &:& \rp \times (0,\tv{u_0}] \to \{-1,0,1\}, & & \text{the \emph{signed existence interval of the wave $s$}},
\end{array}
\end{equation*}
satisfying the following conditions:
\begin{enumerate}
\item \label{Cond_1_entrop} the function $\mathtt a$ is of the form
\begin{equation*}
\mathtt a(t,s) = \mathcal S(s) \chi_{[0,\mathtt T(s))}(t)
\end{equation*}
for some functions
\begin{equation*}
\begin{array}{rclll}
\mathcal S &:& (0,\tv{u_0}] \to \{-1,1\}, & & \text{the \emph{sign of the wave $s$}}, \\
\mathtt T &:& (0,\tv{u_0}] \to \R^+, & & \text{the \emph{time of existence of the wave $s$}};
\end{array}
\end{equation*}
\item the set $E$ is given by
\begin{equation*}
E = \big\{ (t,s) : \mathtt T(s) \geq t \big\};
\end{equation*}
\item \label{Cond_2_entrop} $s \mapsto \mathtt X(t,s)$ is increasing for all $t$, $t \mapsto \mathtt X(t,s)$ is Lipschitz for all $s$, and 
\begin{enumerate}
\item $D_x u(t) = \mathtt X(t,\cdot)_\sharp \Big( a(t,\cdot)\L^1 \mres (0,\tv{u_0}] \Big)$, i.e. for all $t \geq 0$, $\phi \in C^1(\R)$
\begin{equation*}
- \int_{\R} u(t,x) D_x\phi(x)dx = \int_0^{\tv{u_0}} \phi(X(t,s)) a(t,s) ds,
\end{equation*}
\item $|D_x u|(t) = \mathtt X(t,\cdot)_\sharp \Big( |a(t,\cdot)| \L^1 \mres (0,\tv{u}] \Big)$;
\end{enumerate}
\item \label{Cond_3_entrop} the value $\mathtt u$ satisfies for all $t < \mathtt T(s)$
\[
\begin{split}
\mathtt u(s) =&~ D_x u(t)(-\infty,x) + \int_{\{s' < s : \mathtt X(t,s') = \mathtt X(t,s)\}} a(t,s') ds' \\
=&~ u(t,\mathtt X(t,s-)) + \int_{\{s' < s : \mathtt X(t,s') = \mathtt X(t,s)\}} |a(t,s')| ds'.
\end{split}
\]
\end{enumerate}
\end{definition}

Once we have constructed the function $\mathtt s$ in \eqref{E_mathtt_s_def}, we define accordingly
\begin{equation*}
\mathtt X(t,s(j,w)) := \gamma_{j,w}(t),
\end{equation*}
\begin{equation*}
\mathcal S(s(j,w)) := (-1)^{j+1},
\end{equation*}
\begin{equation*}
\mathtt T(s(j,w)) := \mathtt T_{j,w}.
\end{equation*}
These maps are defined $\mathcal L^1$-a.e. on $J = [0,\tv{u_0}]$ by the previous proposition. 

\begin{theorem}
\label{T_wave_repr_cont}
The functions
\begin{equation*}
\mathtt X(t,\mathtt s(j,w)) = \gamma_{j,w}(t), \quad \mathtt u(\mathtt s(j,w)) := w, \quad \mathtt a(t,\mathtt s(j,w)) := (-1)^{j+1} \chi_{[0,\mathtt T_{j,w})}(t)
\end{equation*}
provide a wave representation of the entropy solution $u(t)$.
\end{theorem}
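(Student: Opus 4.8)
The plan is to verify directly the four conditions of Definition \ref{D_wave_repres} for the triple displayed above. All three functions are well defined $\L^1$-a.e.\ on $J=(0,\tv{u_0}]$ because, by Proposition \ref{P_map_s_jw}, $\mathtt s$ is a measure preserving bijection of $(D,\mu)$ onto $(J,\L^1\mres J)$. The first condition and the description of $E$ are then immediate: one sets $\mathcal S(\mathtt s(j,w)):=(-1)^{j+1}\in\{-1,1\}$, $\mathtt T(\mathtt s(j,w)):=\mathtt T_{j,w}$ (a function of $\mathtt s(j,w)$ alone, thanks to Corollary \ref{C_wave_repr}), and $E:=\{(t,s):\mathtt T(s)\geq t\}$, the natural domain of $\mathtt X$; then $\mathtt a(t,s)=\mathcal S(s)\chi_{[0,\mathtt T(s))}(t)$ holds by construction.

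For the second condition I would argue as follows. The Lipschitz regularity of $t\mapsto\mathtt X(t,s)=\gamma_{j,w}(t)$, with constant $\Lambda$, is inherited from the uniform Lipschitz bound on the approximating curves $\gamma^\nu_{j,w}$ recalled in Section \ref{s:elsa}. Monotonicity of $s\mapsto\mathtt X(t,s)$ holds at $t=0$ by the very definition of $\mathtt s$ (waves are ordered by the initial position $\gamma_{j,w}(0)$, and inside a single jump of $u_0$ the factor $\chi_{(-1)^j(w-w')<0}$ orders them consistently with the opening of the corresponding Riemann fan), and it persists for $t>0$ because the boundary curves of the level sets stay ordered (Theorem \ref{T_right_cts_u} and Corollary \ref{C_wave_repr}); alternatively one passes to the limit in $s\mapsto\mathtt X^\nu(t,s)$, which is increasing by Proposition \ref{P_uniq_wave_repr}. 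The two pushforward identities are the coarea formula of Theorem \ref{t:cf} applied, at each fixed $t$, to $u(t,\cdot)\in\BV(\R)$: for $w$ outside the null set of Theorem \ref{T_right_cts_u}, Remark \ref{r:trc} identifies $\partial^*\{x:u(t,x)>w\}$ with the finite set $\{\gamma_{j,w}(t):\mathtt T_{j,w}>t\}$ and the generalized inner normal with the sign $(-1)^{1+j}$; changing variables through the measure preserving $\mathtt s$ then turns $\int_\R\big[\int_{B\cap\partial^*\{u(t)>w\}}\pi\,d\H^0\big]dw$ into $\int_{\{s:\mathtt X(t,s)\in B\}}a(t,s)\,ds$, and similarly for $|D_xu|(t)$ with $|a(t,\cdot)|$.

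For the last condition I would first establish the identity $\mathtt u(\mathtt s(j,w))=w=\int_0^{\mathtt s(j,w)}\mathcal S(s')\,ds'$: at $t=0$ no wave has yet been cancelled, so $a(0,\cdot)=\mathcal S$ on $[0,\mathtt s(j,w))$, and the change of variables already carried out in Proposition \ref{P_map_s_jw} gives $\int_0^{\mathtt s(j,w)}\mathcal S(s')\,ds'=D_xu_0(-\infty,\gamma_{j,w}(0))+\int_{\{s'<\mathtt s(j,w):\,\mathtt X(0,s')=\gamma_{j,w}(0)\}}\mathcal S(s')\,ds'=u_0(\gamma_{j,w}(0)-)+\big(w-u_0(\gamma_{j,w}(0)-)\big)=w$. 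For a general $t<\mathtt T(\mathtt s(j,w))$, set $x_0:=\gamma_{j,w}(t)$; by monotonicity of $\mathtt X(t,\cdot)$ the sets $\{s':\mathtt X(t,s')<x_0\}$ and $\{s'<\mathtt s(j,w):\mathtt X(t,s')=x_0\}$ are, up to null sets, consecutive intervals $[0,\alpha)$ and $[\alpha,\mathtt s(j,w))$, so by the pushforward identity the right-hand side of the condition equals $\int_0^{\mathtt s(j,w)}a(t,s')\,ds'=\int_0^{\mathtt s(j,w)}\mathcal S(s')\,ds'-\int_{\{s'<\mathtt s(j,w):\,\mathtt T(s')\leq t\}}\mathcal S(s')\,ds'$. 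The correction integral vanishes: at each cancellation time the set of dying waves is a single $s$-interval — the disappearing portion of the larger colliding wavefront sits, in the parameter $s$, right next to the (entirely disappearing) smaller one, cf.\ the discussion around Step~4 of Theorem \ref{T_quadratic} — and on that interval $\int\mathcal S=0$; since $\mathtt s(j,w)$ is still alive at time $t$ no such interval can straddle it, so the dying waves below $\mathtt s(j,w)$ form a disjoint union of balanced intervals whose contribution cancels. Hence the right-hand side is $w=\mathtt u(\mathtt s(j,w))$, and the second form of the condition follows from $D_xu(t)(-\infty,x_0)=u(t,x_0-)=u(t,\mathtt X(t,\mathtt s(j,w)-))$ together with the fact that the waves sitting at $x_0$ below $\mathtt s(j,w)$ all have the same sign.

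I expect the genuinely delicate points to be, first, the persistence of the monotonicity of $s\mapsto\mathtt X(t,s)$ for $t>0$ — equivalently, that boundary curves of (possibly different) level sets issuing from ordered points never cross — which is cleanest to obtain by passing to the limit from the wavefront approximations, and, second and above all, the vanishing of the cancelled-wave contribution in the last condition: this rests on the precise combinatorics of binary cancellations in the wavefront algorithm (the fact, proved in \cite{BM}, that $\mathtt u^\nu$ is constant in time), so the safest route is to prove the statement first for $u^\nu$ and then pass to the limit.
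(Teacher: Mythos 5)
Your handling of conditions (1)--(2) --- monotonicity and Lipschitz regularity of $\mathtt X$ inherited from the ordered, uniformly Lipschitz curves $\gamma_{j,w}$, and the two pushforward identities obtained from the coarea formula at fixed $t$ combined with the measure-preserving property of $\mathtt s$ --- is exactly the paper's argument. The genuine divergence, and the gap, is in condition (3). You establish the identity at $t=0$ and then propagate it forward in time by claiming that the correction term $\int_{\{s'<s,\ \mathtt T(s')\le t\}}\mathcal S(s')\,ds'$ vanishes because the dying waves decompose into balanced $s$-intervals, one per binary cancellation. That combinatorial picture is available only for the piecewise constant approximations $u^\nu$: for the entropy solution, cancellation need not occur through a locally finite sequence of binary collisions (the measure $(\id,\mathtt T)_\sharp\mathcal L^1$ can have a diffuse part), so ``a disjoint union of balanced intervals below $s$'' is not justified. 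The balanced-cancellation statement you actually need for $u$ is precisely Lemma \ref{L_no_canc_proj}, which the paper deduces \emph{from} the wave representation (from $\partial_t\mathtt u=0$, i.e.\ from condition (3) itself), so invoking it here would be circular. Your fallback --- prove the identity for $u^\nu$ and pass to the limit --- does not close the gap either: the triple $(\mathtt X,\mathtt u,\mathtt a)$ is built directly from the level sets of $u$, not as a limit of $(\mathtt X^\nu,\mathtt u^\nu,\mathtt a^\nu)$, and no convergence of $\mathtt T^\nu$ to $\mathtt T$ is established (nor is it clear that it holds).

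The paper sidesteps the issue by verifying condition (3) directly at time $t$, with no reference to what happened on $[0,t)$: since $\mathtt T_{j,w}>t$, the point $x=\gamma_{j,w}(t)$ still lies on the boundary of $\{u(t)>w\}$ (``the value $w$ is constant on the curve $\gamma_{j,w}$''), and the same computation you carried out at $t=0$ --- coarea formula for $u(t,\cdot)$ at the point $x$ together with monotonicity of $\mathtt X(t,\cdot)$, which makes $\{s'<s:\mathtt X(t,s')=x\}$ an interval of live waves whose values sweep monotonically from $u(t,x-)$ to $w$ --- yields $u(t,x-)+\int_{\{s'<s:\,\mathtt X(t,s')=x\}}|a(t,s')|\,ds'=w$ directly. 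Replacing your cancellation-tracking step with this time-$t$ computation repairs the proof; the rest of your argument can stand as written.
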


\begin{proof}
The fact that $\mathtt X(t,s)$ is increasing in $s$ and Lipschitz in $t$ is immediate from the fact that $(\mathtt s(j,w),\gamma_{j,w}(t))$ is monotone and $t \mapsto \gamma_{j,w}$ is Lipschitz. In order to prove that it is a wave representation, we are left to prove that
\begin{equation}
\label{E_wave_cts_1}
D_x u(t) = \mathtt X(t)_\sharp \big( \mathtt a(t) \mathcal L^1 \big), \quad |D_x u(t)| = \mathtt X(t)_\sharp \big( |\mathtt a(t)| \mathcal L^1 \big),
\end{equation}
and that
\begin{equation*}
\mathtt u(s) = \int_{s' < s} \mathtt a(t,s) ds.
\end{equation*}
This last equation follows immediately from the definition of $\mathtt s(j,w)$ and the fact that the value $w$ is constant on the curve $\gamma_{j,w}$, while the other two follow from coarea formula, the measure preserving property of the map $\mathtt s$ and the fact that the curves $\gamma_{i,w}$ are disjoint for $\mathcal L^1$-a.e. $w \in [0,\tv{u_0}]$.
\end{proof}

In order to make the wave representation defined pointwise, just extend it in order to be left continuous.

To conclude this section we prove two additional estimates on the wave representation, which are trivially verified for the wavefront approximations. The first is an auxiliary result, which is a consequence of the fact that $\mathtt u(s)$ is not time dependent.

\begin{lemma}
\label{L_no_canc_proj}
The following holds:
\begin{equation*}
\mathtt X_\sharp \big( \mathcal S(s) \partial_t \chi_{t < \mathtt T(s)} \big) = 0.
\end{equation*}
\end{lemma}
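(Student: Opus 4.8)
The statement $\mathtt X_\sharp \big( \mathcal S(s) \partial_t \chi_{t < \mathtt T(s)} \big) = 0$ is a statement about a measure on $\R^+ \times \R$, obtained by pushing forward (through the map $(t,s) \mapsto (t,\mathtt X(t,s))$) the distributional time derivative of $\mathtt a(t,s) = \mathcal S(s)\chi_{[0,\mathtt T(s))}(t)$. Since $t \mapsto \chi_{t<\mathtt T(s)}$ jumps from $1$ to $0$ exactly at $t = \mathtt T(s)$, the measure $\mathcal S(s)\partial_t\chi_{t<\mathtt T(s)}$ is $-\mathcal S(s)\,\delta_{\mathtt T(s)}(dt)\otimes\mathcal L^1(ds)$, concentrated on the ``death set'' $\{(t,s): t = \mathtt T(s)\}$ of the waves. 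The plan is to test this pushforward measure against an arbitrary $\psi \in C^1_c(\R^+\times\R)$ and show the result vanishes, by exploiting that the value $\mathtt u(s)$ is \emph{independent of time} (Condition \eqref{Cond_3_entrop}), which is precisely what constrains how waves can be cancelled.

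First I would write, for $\psi \in C^1_c$,
\[
\Big\langle \mathtt X_\sharp \big( \mathcal S(s)\partial_t\chi_{t<\mathtt T(s)}\big),\psi\Big\rangle = -\int_J \mathcal S(s)\,\psi\big(\mathtt T(s),\mathtt X(\mathtt T(s),s)\big)\,ds,
\]
where $\mathtt X(\mathtt T(s),s)$ is understood as the left-continuous-in-time value, i.e. the location of the cancellation point where wave $s$ disappears. The key structural input, inherited from Corollary \ref{C_wave_repr} and the construction of $\mathtt T_{j,w}$, is that for $\mathcal L^1$-a.e.\ $w$ the waves of a given level $w$ die in pairs at the same point: when curves $\gamma_{j,w}$ join at $(\bar t,\bar x)$, an \emph{even} number of them get finite time of existence $\bar t$, and consecutive ones have opposite signs $\mathcal S = (-1)^{j+1}$. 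Hence, reparametrizing the integral back through $\mathtt s$ to the $(j,w)$ variables and using the coarea/measure-preserving property of $\mathtt s$ from Proposition \ref{P_map_s_jw}, the integral becomes a sum over cancellation points $(\bar t,\bar x)$ of terms of the form $\sum_{j} (-1)^{j+1}\, (\text{length of } w\text{-interval of waves of index } j \text{ dying there})\,\psi(\bar t,\bar x)$, and within each cancellation point the signed lengths telescope to zero because $\mathtt u(s) = \int_0^s \mathcal S$ is constant in time: the set of values $\{\mathtt u(s) : s \text{ dies at } (\bar t,\bar x)\}$ is a single interval $[u^-,u^+]$ traversed with alternating sign, and $\int_{u^-}^{u^+}\mathcal S\,du$ over this interval integrates $\tfrac{d}{ds}\mathtt u$, which being a death set means $u(\bar t,\bar x-) = u(\bar t,\bar x+)$ — the left and right traces coincide — so the net signed wave measure cancelled is zero.

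The cleanest way to organize this is probably to \emph{not} pass to the level-set picture at all, but instead use Lemma \ref{L_no_canc_proj}'s hypotheses directly: differentiate Condition \eqref{Cond_3_entrop} formula $\mathtt u(s) = u(t,\mathtt X(t,s-)) + \int_{\{s'<s:\mathtt X(t,s')=\mathtt X(t,s)\}} |a(t,s')|\,ds'$ in $t$ in the sense of distributions. The left side is $0$ since $\mathtt u$ is time-independent. Combined with Corollary \ref{C_u_t_def_1}'s expression for $D_t u(t)$ and the fact that $D_x u(t) = \mathtt X(t)_\sharp(\mathtt a(t)\mathcal L^1)$, the change in $\mathtt u$ along time can only come from (i) transport of $u(t,\mathtt X(t,s-))$ along the characteristic $\mathtt X(t,s)$, which is compensated by the PDE, and (ii) waves $s'$ with $\mathtt X(t,s')=\mathtt X(t,s)$ either being created (impossible — total variation in $s$ is fixed) or cancelled. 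So the vanishing of $\partial_t\mathtt u$ forces the cancelled-wave contribution to have zero signed total, which is exactly $\mathtt X_\sharp(\mathcal S(s)\partial_t\chi_{t<\mathtt T(s)}) = 0$.

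\textbf{Main obstacle.} The delicate point is making rigorous the ``differentiate Condition \eqref{Cond_3_entrop} in time'' step, because $u(t,\mathtt X(t,s-))$ is only BV in $t$ (the characteristic $\mathtt X(t,s)$ can cross many other fronts) and the set $\{s' : \mathtt X(t,s') = \mathtt X(t,s)\}$ is itself time-varying. One must argue at the level of the approximations $u^\nu$ — where the identity $\mathtt X^\nu_\sharp(\mathcal S^\nu \partial_t\chi_{t<\mathtt T^\nu}) = 0$ holds trivially because a binary cancellation at $(t_j,x_j)$ removes an interval of waves $[\ell,\ell']$ with $\mathtt u^\nu(\ell) = \mathtt u^\nu(\ell')$, hence $\int_\ell^{\ell'}\mathcal S^\nu\,ds = 0$ — and then pass to the limit using the $L^1$-convergence of the level sets (Theorem \ref{T_right_cts_u}) and the uniform Lipschitz bounds on the curves $\gamma^\nu_{j,w}$, checking that the death-set measures converge weakly-$*$. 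The bookkeeping for odd-cardinality joins (where we chose to prolong the rightmost curve) needs a short separate check that the prolongation convention is consistent with $\mathcal S = (-1)^{j+1}$ and does not spoil the cancellation.
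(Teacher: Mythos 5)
Your route (b) is in essence the paper's argument: the lemma follows from differentiating in time the identity $\mathtt u(\bar s) = \int_0^{\bar s}\mathcal S(s)\chi_{t<\mathtt T(s)}\,ds$, which holds for $t<\mathtt T(\bar s)$ by Condition (3) of the wave representation together with the monotonicity of $s\mapsto\mathtt X(t,s)$ (dead waves contribute nothing, and the alive waves $s'<\bar s$ are exactly those with $\mathtt X(t,s')<\mathtt X(t,\bar s)$, or with equal position and smaller parameter). The paper makes this rigorous with no detour: it first observes that $\partial_t\chi_{t\le\mathtt T(s)} = -(\id,\mathtt T)_\sharp\mathcal L^1\mres J$ as a measure on $\R^+\times J$, disintegrates it as $\int\mu_t\,m(dt)$, and then the constancy in $t$ of the BV function $t\mapsto\int_0^{\bar s}\mathcal S(s)\chi_{t<\mathtt T(s)}\,ds$ yields $\int_0^{\bar s}\mathcal S\,d\mu_t=0$ for every $\bar s$ and $m$-a.e.\ $t$; evaluating this on $\mathtt X(t)^{-1}([x,x'])$ for continuity points $x<x'$ that are not images of jumps of $\mathtt X(t)$ gives exactly the vanishing of the pushforward. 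So the step you flag as the ``main obstacle'' is not an obstacle: no return to the approximations $u^\nu$ and no weak-$*$ convergence of the death-set measures is needed (and none is established in the paper --- proving that the measures concentrated on $\{t=\mathtt T^\nu(s)\}$ converge to the limit object would itself be nontrivial, since $\mathtt T^\nu$ need not converge pointwise). Leaving the central step unresolved and proposing a heavier, unjustified workaround is the genuine gap in your write-up.

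A second, smaller point concerns your route (a): you justify the pointwise cancellation by asserting that at a death point ``$u(\bar t,\bar x-)=u(\bar t,\bar x+)$ --- the left and right traces coincide''. This is false for partial cancellations (a shock meeting a rarefaction of smaller strength leaves a surviving shock, so the traces differ while waves still die there). The correct mechanism --- which you also state --- is the pairing built into the definition of $\mathtt T_{j,w}$: when the curves $\gamma_{N_1,w},\dots,\gamma_{N_2,w}$ join at a point, an even number of consecutive ones are assigned $\mathtt T_{j,w}=\bar t$, and their signs $(-1)^{j+1}$ alternate, so the signed contribution at each level $w$ vanishes; integrating in $w$ through the measure-preserving map of Proposition \ref{P_map_s_jw} gives the claim. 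Either mechanism (the even pairing, or the time-independence of $\mathtt u$) suffices, but the trace-coincidence claim should be deleted.
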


\begin{proof}
It is immediate to see that
\begin{equation*}
\partial_t \chi_{t \leq \mathtt T(s)}(t,s) = - (\id,\mathtt T)_\sharp \mathcal L^1 \mres J.
\end{equation*}
In fact, for all $\phi \in C^1_c(\R^+ \times J)$
\begin{equation*}
\begin{split}
\int \partial_t \phi \chi_{t \leq \mathtt T(s)} dtds =&~ \int \phi(\mathtt T(s),s) ds = \int \phi(t,s) \big( (\id,\mathtt T)_\sharp \mathcal L^1\big) (dtds).
\end{split}
\end{equation*}
Write the disintegration
\begin{equation*}
(\id,\mathtt T)_\sharp \mathcal L^1 \mres J = \int \mu_t(ds) m(dt).
\end{equation*}

Using the fact that $\partial_t \mathtt u(s) = 0$ one obtains immediately
\begin{equation*}
\begin{split}
0 =&~ \partial_t \int_0^{\bar s} \mathcal S(s) \chi_{t < \mathtt T(s)}(s) ds = - \int_0^{\bar s} \mathcal S(s) \mu_t(ds).
\end{split}
\end{equation*}
%
If $x < x'$ are continuity points of $u$ which are not image points of any jump of $\mathtt X(t)$, then there exists unique point $s < s'$ such that
\begin{equation*}
x = \mathtt X(t,s) , \qquad x' = \mathtt X(t,s').
\end{equation*}
In particular we obtain
\begin{equation*}
\mathtt X(t)_\sharp \big( \mathcal S(t) \mu_t \big)([x,x']) = \int_s^{s'} \mathcal S(s) \mu_t(ds) = 0.
\end{equation*}
This proves the statement.
\end{proof}

Define the \emph{cancellation measure} as
\begin{equation*}
\xi^\mathrm{canc} := \mathtt X_\sharp (\partial_t \chi_{t < \mathtt T}) = \big( \mathtt X \circ (\mathtt T,\id) \big)_\sharp \mathcal L^1. 
\end{equation*}
An immediate corollary is the following.

\begin{corollary}
\label{C_canc_on_jumps}
The cancellation measure is concentrated on the jumps of $u(t)$.
\end{corollary}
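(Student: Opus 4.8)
The plan is to combine the previous lemma with an explicit description of where the cancellation measure can possibly be supported. The cancellation measure $\xi^{\mathrm{canc}} = \mathtt X_\sharp(\partial_t \chi_{t<\mathtt T})$ records, at the level of the wave parametrization, the times and positions at which waves cease to exist. From the definition $\xi^{\mathrm{canc}} = (\mathtt X \circ (\mathtt T,\id))_\sharp \mathcal L^1$ we see that $\xi^{\mathrm{canc}}$ is a (positive) finite measure on $\R^+ \times \R$, and we must show it gives zero mass to the complement of the jump set of $u$.

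The first step is to reduce the claim to a statement about the sign measure. By Lemma \ref{L_no_canc_proj} we already know $\mathtt X_\sharp(\mathcal S(s)\,\partial_t\chi_{t<\mathtt T(s)}) = 0$, i.e. the \emph{signed} projection vanishes identically. So if at a point $(\bar t,\bar x)$ which is a point of approximate continuity of $u$ the measure $\xi^{\mathrm{canc}}$ had positive mass, then, since every wave $s$ carries a fixed sign $\mathcal S(s) = \pm 1$, the cancellation at $(\bar t,\bar x)$ would have to come from waves of both signs in exactly balanced amounts (otherwise the signed projection would not cancel at that atom, contradicting Lemma \ref{L_no_canc_proj}). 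Thus I would first note: at any atom of $\xi^{\mathrm{canc}}$, equal amounts of positive and negative waves must disappear simultaneously; and, more locally, on any Borel set $B$ avoiding the jump set, $\mathtt X_\sharp(|\partial_t\chi_{t<\mathtt T}|)(B)$ equals twice the amount of positive (equivalently negative) waves cancelled there.

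The second step is the geometric input. Cancellation of a positive and a negative wave at $(\bar t,\bar x)$ means two curves $\mathtt X(\cdot,s) = \gamma_{j,w}$ and $\mathtt X(\cdot,s') = \gamma_{j',w}$ from the \emph{same} level $w$ (indices of opposite parity) collide at $(\bar t,\bar x)$; equivalently, recalling the level-set picture of Theorem \ref{T_right_cts_u} and Corollary \ref{C_wave_repr}, the boundary $\partial\{u>w\}$ has two curves merging there for a set of $w$ of positive $\mathcal L^1$-measure (since $\xi^{\mathrm{canc}}$ has positive mass on $B$, coarea forces positively many levels to contribute). When two such curves merge at $(\bar t,\bar x)$, for $t$ slightly less than $\bar t$ the set $\{u(t,\cdot)>w\}$ contains a thin sliver between $\gamma_{j,w}(t)$ and $\gamma_{j',w}(t)$ pinching off at $x = \bar x$; hence on one side of $\bar x$ (the side of the pinch) $u$ takes values $\le w$ in a neighborhood while on the portion of the sliver $u > w$. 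Running this for a positive-measure family of levels $w$, together with the balanced-sign structure from Step 1, one concludes that at $(\bar t,\bar x)$ the left and right $L^1$-approximate limits of $u$ must differ — i.e. $(\bar t,\bar x)$ lies in the jump set. I would make this precise by testing the identity $D_x u(t) = \mathtt X(t)_\sharp(\mathtt a(t)\mathcal L^1)$ against the relevant intervals and passing $t \uparrow \bar t$, using the Lipschitz (hence equicontinuous) character of the curves $\gamma_{j,w}$.

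The main obstacle is the second step: turning "a positive-measure family of level-set curves merges at $(\bar t,\bar x)$" into "$u$ has a genuine jump at $(\bar t,\bar x)$" in the measure-theoretic (approximate-limit) sense, rather than merely "$u$ is not continuous there." The subtlety is that a single merging pair contributes nothing to $\mathcal L^2$, so one genuinely needs the coarea bookkeeping: the mass $\xi^{\mathrm{canc}}(B) > 0$ must be distributed across an $\mathcal L^1$-positive set of levels $w$, and for each such $w$ the sliver that pinches off has, for $t$ near $\bar t$, width bounded below on one side — which after integrating in $w$ produces a definite jump in the $L^1$-approximate trace of $u$ at $(\bar t,\bar x)$. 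Controlling the interplay between "width in $x$" and "measure of levels" uniformly enough to get a nonzero approximate jump is the delicate point; everything else is a routine consequence of Lemma \ref{L_no_canc_proj}, the coarea formula, and the wave representation established in Theorem \ref{T_wave_repr_cont}.
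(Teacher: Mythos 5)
Your first step coincides with the paper's: Lemma \ref{L_no_canc_proj} forces the disintegration $\mu_t$ of the cancellation measure to give no mass to waves $s$ whose fibre $\mathtt X^{-1}(t,\mathtt X(t,s))$ is a singleton, since such a fibre carries a single sign and the signed projection vanishes. The problem is your second step, and it is twofold. First, you are aiming at the wrong notion of jump. The corollary asserts concentration on $\Xi=\{(t,x):u(t,x-)\neq u(t,x+)\}$, a statement about the one-variable function $x\mapsto u(t,x)$ at fixed $t$; you instead try to show that the two-dimensional $L^1$-approximate left and right limits of $u$ differ at $(\bar t,\bar x)$. That is strictly stronger, and it is exactly the property the paper can only establish \emph{outside} the countable set $\Theta$ (Theorem \ref{T_rect_out_inter}). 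Atoms of $\xi^{\mathrm{canc}}$ lie in $\Theta^{\mathrm{canc}}\subset\Theta$, so your sliver construction is trying to prove, at precisely the bad points, a regularity that the paper's framework does not guarantee there. Second, even for the correct target the level-set/coarea detour is unnecessary and, as you yourself concede, not closed: the ``width in $x$ versus measure of levels'' estimate is the whole content of your argument and it is never controlled.

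The paper avoids all of this by reading the conclusion directly off the wave representation: by \eqref{E_wave_cts_1} one has $|D_x u(t)|(\{x\})=\mathcal L^1\big(\{s\in\mathtt X(t)^{-1}(x): t<\mathtt T(s)\}\big)$, so any point whose fibre is a nondegenerate interval containing a positive measure of surviving waves is an atom of $|D_xu(t)|$, i.e.\ a point where $u(t,x-)\neq u(t,x+)$. Combined with the first step, this gives the corollary in two lines, with no pinching argument and no passage $t\uparrow\bar t$. The one genuine loose end, which neither your proposal nor the short argument confronts explicitly, is total cancellation: if the entire fibre $\mathtt X^{-1}(t,x)$ were killed at time $t$, no surviving wave would remain to produce the jump. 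If you want to strengthen the write-up, that is the case worth addressing (two admissible shocks with the same endpoints and opposite signs have equal Rankine--Hugoniot speeds, so they cannot collide and annihilate completely), rather than the approximate-limit analysis you sketched.
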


\begin{proof}
We just need to observe that from Lemma \ref{L_no_canc_proj} it holds
\begin{equation*}
\mu_t \big( \big\{ s : \mathtt X^{-1}(t,\mathtt X(t,s)) \ \text{singleton} \big\} \big) = 0,
\end{equation*}
and that the points such that
\begin{equation*}
\int_{\mathtt X(t)^{-1}(x)} \chi_{t < \mathtt T(s)} ds > 0
\end{equation*}
are the discontinuities of $u(t)$. 
\end{proof}


The second result shows that the curves $t \mapsto \mathtt X(t,s)$, $t \in [0,\mathtt T(s))$, are characteristic curve of
\begin{equation*}
u_t + f(u)_x = 0.
\end{equation*}
Writing the above equation as
\begin{equation*}
D_t u + \tilde \lambda D_x u = 0,
\end{equation*}
with $\tilde \lambda$ given by Volpert rule,
\begin{equation*}
\tilde \lambda(t,x) :=
\begin{cases}
f'(u(t,x)) & u(t) \ \text{continuous in} \ x, \\
\frac{f(u(t,x-)) - f(u(t,x+))}{u(t,x-) - u(t,x+)} & \text{otherwise},
\end{cases}
\end{equation*}
we obtain
\begin{equation*}
D_t u = \mathtt X_\sharp \Big( - \tilde \lambda(t,\mathtt X(t)) \mathcal S \chi_{t<\mathtt T(s)} \mathcal L^1 \Big).
\end{equation*}

\begin{proposition}
\label{P_cancell_points}
The following holds: for $\mathcal L^1$-a.e. $t \in [0,\mathtt T(s))$ 
\begin{equation*}
\partial_t \mathtt X(t,s) = \tilde \lambda(t,\mathtt X(t,s)).
\end{equation*}
\end{proposition}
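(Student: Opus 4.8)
The plan is to combine the two available expressions for $D_t u(t)$ — one coming from the conservation law, one from the wave representation — and then to upgrade the resulting identity from an integrated form to a pointwise one, using that the maps $s\mapsto\mathtt X(t,s)$ are monotone increasing (waves cannot cross).

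\textbf{Step 1: a weak identity.} I would differentiate in time the relation $D_x u(t) = \mathtt X(t)_\sharp(\mathtt a(t)\,\mathcal L^1)$. Fix $\psi\in C^1_c((0,\infty))$ and $\phi\in C^2_c(\R)$ and test $u_t+f(u)_x=0$ against $\Theta(t,x)=\psi(t)\phi'(x)$. Using the wave representation to rewrite $\int u(t,x)\phi'(x)\,dx = -\int\phi(\mathtt X(t,s))\mathtt a(t,s)\,ds$, and the Volpert chain rule $D_x f(u)=\tilde\lambda D_x u$ (exactly as in the line preceding the statement) to rewrite $\int f(u(t,x))\phi''(x)\,dx = -\int\phi'(\mathtt X(t,s))\tilde\lambda(t,\mathtt X(t,s))\mathtt a(t,s)\,ds$, one gets $\int\psi'(t)\big[\int\phi(\mathtt X)\mathtt a\,ds\big]dt = -\int\psi(t)\big[\int\phi'(\mathtt X)\tilde\lambda(\cdot,\mathtt X)\mathtt a\,ds\big]dt$. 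Since $\mathtt a(t,s)=\mathcal S(s)\chi_{[0,\mathtt T(s))}(t)$ and $t\mapsto\mathtt X(t,s)$ is Lipschitz, I integrate by parts in $t$ on the left; the boundary term $-\int\mathcal S(s)\psi(\mathtt T(s))\phi(\mathtt X(\mathtt T(s),s))\,ds$ vanishes by Lemma~\ref{L_no_canc_proj} (disintegrate $(\mathtt T,\id)_\sharp\mathcal L^1\mres J$ in time and use $\mathtt X(t)_\sharp(\mathcal S\mu_t)=0$ for a.e.\ $t$, as in its proof), and what survives is $\int\psi(t)\big[\int\phi'(\mathtt X(t,s))\,\partial_t\mathtt X(t,s)\,\mathtt a(t,s)\,ds\big]dt$. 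Comparing and letting $\psi,\phi$ vary, for $\mathcal L^1$-a.e.\ $t$ and every $\phi\in C^2_c(\R)$,
\[
\int_J \phi'(\mathtt X(t,s))\big[\partial_t\mathtt X(t,s) - \tilde\lambda(t,\mathtt X(t,s))\big]\mathtt a(t,s)\,ds = 0 .
\]
Hence the signed measure $\mu_t^* := \mathtt X(t)_\sharp\big(\big[\partial_t\mathtt X(t) - \tilde\lambda(t,\mathtt X(t))\big]\mathtt a(t)\,\mathcal L^1\mres J\big)$, which is finite (density bounded by $2\Lambda$, mass $\le 2\Lambda\tv{u_0}$) and has compact support, has vanishing distributional derivative, so $\mu_t^*=0$.

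\textbf{Step 2: unraveling $\mu_t^*=0$.} For fixed $t$ split $\{s:\mathtt a(t,s)\neq 0\}$ into $J_1(t)$, the $s$ for which $\mathtt X(t,s)$ is a jump point of $u(t,\cdot)$, and its complement $J_0(t)$. The set $J_1(t)$ is a countable union of intervals $I_x=\mathtt X(t,\cdot)^{-1}(x)$, on each of which $\mathtt a(t,\cdot)$ has constant sign (compare $D_x u(t)=\mathtt X(t)_\sharp(\mathtt a\,\mathcal L^1)$ with $|D_x u|(t)=\mathtt X(t)_\sharp(|\mathtt a|\,\mathcal L^1)$), while on $J_0(t)$ the map $\mathtt X(t,\cdot)$ is injective (a flat part of $\mathtt X(t,\cdot)$ forces an atom of $|D_x u|(t)$, i.e.\ a jump). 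Injectivity on $J_0(t)$ and $\mu_t^*=0$ give $\partial_t\mathtt X(t,s)=f'(u(t,\mathtt X(t,s)))=\tilde\lambda(t,\mathtt X(t,s))$ for a.e.\ $s\in J_0(t)$; on each $I_x$, vanishing of the corresponding atom of $\mu_t^*$ only yields the averaged identity $\int_{I_x}\partial_t\mathtt X(t,s)\,ds=\tilde\lambda(t,x)\,\mathcal L^1(I_x)$.

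\textbf{Step 3: pointwise on shocks.} This last point — which I expect to be the main obstacle, since the weak computation is inherently blind to what happens inside a shock — I would settle from the monotonicity of $\mathtt X(h,\cdot)$. Let $D:=\{(t,s): h\mapsto\mathtt X(h,s)$ is differentiable at $h=t\}$; each $s$-slice of the complement is $\mathcal L^1$-null, so by Fubini for a.e.\ $t$ the set $\{s:(t,s)\notin D\}$ is null. Fix such a $t>0$ satisfying also Steps 1--2. If $s_1<s_2$, $(t,s_1),(t,s_2)\in D$ and $\mathtt X(t,s_1)=\mathtt X(t,s_2)$, then $h\mapsto\mathtt X(h,s_2)-\mathtt X(h,s_1)$ is nonnegative, vanishes at the interior point $h=t$, and is differentiable there, hence $\partial_t\mathtt X(t,s_1)=\partial_t\mathtt X(t,s_2)$. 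Thus $s\mapsto\partial_t\mathtt X(t,s)$ is constant on $I_x\cap\{s:(t,s)\in D\}$, which is co-null in $I_x$, and by the averaged identity this constant is $\tilde\lambda(t,x)$. Combining, $\partial_t\mathtt X(t,s)=\tilde\lambda(t,\mathtt X(t,s))$ for $\mathcal L^2$-a.e.\ $(t,s)$ with $t<\mathtt T(s)$, which by Fubini is the assertion. (If the identity $D_t u(t)=\mathtt X(t)_\sharp(-\partial_t\mathtt X(t)\,\mathtt a(t)\,\mathcal L^1)$ — the entropy-solution analogue of Corollary~\ref{C_u_t_def_1}, obtained in the limit via the uniform bound of Theorem~\ref{T_quadratic} — is already available, then Step 1 collapses to comparing it with $D_t u(t)=\mathtt X(t)_\sharp(-\tilde\lambda(t,\mathtt X(t))\,\mathtt a(t)\,\mathcal L^1)$, and only Steps 2--3 remain.)
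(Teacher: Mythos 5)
Your proposal is correct and follows essentially the same route as the paper's proof: both derive the weak identity $\mathtt X(t)_\sharp\big((\sigma-\tilde\lambda)\mathcal S\chi_{t<\mathtt T}\mathcal L^1\big)=0$ by testing the conservation law through the wave representation and killing the boundary term with Lemma~\ref{L_no_canc_proj}, then treat the injective part directly and reduce the shock intervals $\mathtt X^{-1}(t,x)$ to an averaged identity that must be upgraded using monotonicity of $s\mapsto\mathtt X(t,s)$. Your Step 3 executes that last upgrade slightly differently — a first-derivative test on the nonnegative function $h\mapsto\mathtt X(h,s_2)-\mathtt X(h,s_1)$ at its interior zero, rather than the paper's appeal to $L^1$-continuity of $t\mapsto\sigma(t,s)$ — which is a cleaner and arguably more rigorous way of extracting the same consequence of monotonicity.
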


\begin{proof}
%
%
%
By writing
\begin{equation*}
\begin{split}
\int \partial_x \phi D_t u dx =&~ \int \partial_t \phi D_x u dt \\
(\text{by} \ \eqref{E_wave_cts_1}) =&~ \int \partial_t \phi(t,X(t,s)) \mathcal S(s) \chi_{t < \mathtt T(s)} dsdt \\
=&~ \int \bigg( \frac{d}{dt} \phi(t,X(t,s)) - \partial_x \phi(t,\mathtt X(t,s)) \frac{d}{dt} \mathtt X(t,s) \bigg) \mathcal S(s) \chi_{t < \mathtt T(s)} dsdt \\
=&~ \int \phi(\mathtt T(s),\mathtt X(\mathtt T(s),s)) \mathcal S(s) ds - \int \partial_x \phi(t,\mathtt X(t,s)) \sigma(t,s) \mathcal S(s) \chi_{t < \mathtt T(s)} ds dt \\
(\text{by Lemma \ref{L_no_canc_proj}}) =&~ - \int \partial_x \phi(t,\mathtt X(t,s)) \sigma(t,s) \mathcal S(s) \chi_{t < \mathtt T(s)} ds dt \\
\end{split}
\end{equation*}
where we have defined
\begin{equation*}
\sigma(t,s) := \frac{d}{dt} \mathtt X(t,s).
\end{equation*}
We thus conclude that
\begin{equation*}
\int \partial_x \phi(t,\mathtt X(t,s)) \big( \sigma(t,s) - \tilde \lambda(t,\mathtt X(t,s)) \big) \mathcal S(s) \chi_{t < \mathtt T(s)} ds dt = 0.
\end{equation*}
Using the arbitrariness of $\phi$ one thus obtains that up to a $\mathcal L^1$-negligible set of times
\begin{equation*}
\mathtt X(t)_\sharp \Big( \big( \sigma(t,s) - \tilde \lambda(t,\mathtt X(t,s)) \big) \mathcal S(s) \chi_{t < \mathtt T(s)} \mathcal L^1 \Big) = 0.
\end{equation*}

We thus conclude that for $\mathcal L^1$-a.e. $s$ such that $\mathtt X^{-1}(t,\mathtt X(t,s))$ is single valued it holds
\begin{equation*}
\sigma(t,s) = \tilde \lambda(t,\mathtt X(t,s)) = f'(u(t,\mathtt X(t,s))).
\end{equation*}
In the intervals of the form $\mathtt X^{-1}(t,x)$ for some $x$ we obtain
\begin{equation*}
\int_{\mathtt X^{-1}(t,x)} \big( \sigma(t,s) - \tilde \lambda(t,\mathtt X(t,s)) \big) \mathcal S(s) \chi_{t < \mathtt T(s)} ds = 0.
\end{equation*}
Since the points of $L^1$-continuity of $t \mapsto \sigma(t,s)$ are dense in $t$, by removing a $\mathcal L^1$-negligible set of $t$ we have that $t \mapsto \sigma(t,s)$ is continuous up to a $\mathcal L^1$-negligible set of $s$.

In these times the only possibility in order to have $s \mapsto \mathtt X(t,s)$ monotone is that
\begin{equation*}
\sigma(t,s) = \tilde \lambda(t,\mathtt X(t,s)).
\end{equation*}
This concludes the proof.
\end{proof}

Finally the convergence of the curves $\gamma_{i,w}(t)$ as shown in the proof of Theorem \ref{T_right_cts_u} gives the following estimate.

\begin{theorem}
\label{T_quadr_contin}
The following holds:
\begin{equation*}
\int_J \tv{\sigma(t,s), [0,\mathtt T(s)) } ds \leq 3\|f''\|_\infty \tv{u_0,\R}^2.
\end{equation*}
\end{theorem}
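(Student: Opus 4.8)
The plan is to obtain this as the limiting counterpart of the quadratic estimate of Theorem~\ref{T_quadratic}, by combining the uniform bound there with the lower semicontinuity of the total variation and the uniform convergence $\gamma^\nu_{j,w}\to\gamma_{j,w}$ of the level--set curves established in the proof of Theorem~\ref{T_right_cts_u}. The first step is to rewrite both sides in the $(j,w)$ coordinates. Since $\mathtt s$ is measure preserving onto $J$ (Proposition~\ref{P_map_s_jw}) and, by Theorem~\ref{T_wave_repr_cont}, $\mathtt X(t,\mathtt s(j,w))=\gamma_{j,w}(t)$ and $\mathtt T(\mathtt s(j,w))=\mathtt T_{j,w}$, the change of variables $s=\mathtt s(j,w)$ together with $\sigma(t,s)=\tfrac{d}{dt}\mathtt X(t,s)$ gives
\begin{equation*}
\int_J \tv{\sigma(t,s);[0,\mathtt T(s))}\,ds = \int_0^M \sum_{j=1}^{N_w} \tv{\dot\gamma_{j,w};[0,\mathtt T_{j,w})}\,dw ,
\end{equation*}
and the same computation for the wavefront approximations, based on the measure preserving map $\mathtt U$ with inverse \eqref{E_inverse_sS_wi}, yields
\begin{equation*}
\int_J \tv{\sigma^\nu(t,s);[0,\mathtt T^\nu(s))}\,ds = \int_0^M \sum_{j=1}^{N^\nu_w} \tv{\dot\gamma^\nu_{j,w};[0,\mathtt T^\nu_{j,w})}\,dw .
\end{equation*}
By Theorem~\ref{T_quadratic} and $\tv{u^\nu_0}\leq\tv{u_0}$ (see \eqref{e:approxbv}), the right--hand side of the second identity is bounded by $3\|f''\|_\infty\tv{u_0,\R}^2$ for every $\nu$.

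Next I would pass to the limit by a slicing argument. Fix a subsequence realizing $\liminf_\nu\int_J\tv{\sigma^\nu(t,s);[0,\mathtt T^\nu(s))}\,ds$; by Fatou's lemma in the variable $w$ it suffices to prove that for $\mathcal L^1$--a.e.\ $w\in[0,M]$
\begin{equation*}
\sum_{j=1}^{N_w}\tv{\dot\gamma_{j,w};[0,\mathtt T_{j,w})} \leq \liminf_\nu \sum_{j=1}^{N^\nu_w}\tv{\dot\gamma^\nu_{j,w};[0,\mathtt T^\nu_{j,w})} .
\end{equation*}
Restrict to a full--measure set of $w$ for which $N^\nu_w=N_w$ eventually, $\gamma^\nu_{j,w}\to\gamma_{j,w}$ uniformly on compact time sets (both true for a.e.\ $w$ by Theorem~\ref{T_right_cts_u} and Lemma~\ref{L_comap_A_nu_iw}), and the right--hand side is finite. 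Along a further, possibly $w$--dependent, subsequence realizing that $\liminf$, each $\tv{\dot\gamma^\nu_{j,w};[0,\mathtt T^\nu_{j,w})}$ is bounded; since also $|\dot\gamma^\nu_{j,w}|\leq\Lambda$, Helly's selection theorem gives a subsequence along which $\dot\gamma^\nu_{j,w}$ converges pointwise in $t$ for each $j$, and dominated convergence combined with $\gamma^\nu_{j,w}\to\gamma_{j,w}$ forces the limit to be $\dot\gamma_{j,w}$. Hence $\dot\gamma^\nu_{j,w}\to\dot\gamma_{j,w}$ in $L^1_{\mathrm{loc}}$ in $t$, and lower semicontinuity of the total variation gives, for every $\delta>0$ and all $\nu$ large with $\mathtt T^\nu_{j,w}>\mathtt T_{j,w}-\delta$,
\begin{equation*}
\liminf_\nu\tv{\dot\gamma^\nu_{j,w};[0,\mathtt T^\nu_{j,w})} \geq \liminf_\nu\tv{\dot\gamma^\nu_{j,w};[0,\mathtt T_{j,w}-\delta)} \geq \tv{\dot\gamma_{j,w};[0,\mathtt T_{j,w}-\delta)} ;
\end{equation*}
letting $\delta\downarrow0$ and summing over $j=1,\dots,N_w$ yields the displayed a.e.\ inequality, and hence the theorem.

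The main obstacle is the bookkeeping of the existence times: one must verify that $\liminf_\nu\mathtt T^\nu_{j,w}\geq\mathtt T_{j,w}$, i.e.\ that in the limit the curve $\gamma_{j,w}$ cannot merge with a neighbour strictly before the approximating curves do, so that the restriction to $[0,\mathtt T_{j,w}-\delta)$ is legitimate; this uses the construction of $\mathtt T_{j,w}$ from the merging pattern of the level sets (Lemma~\ref{L_number_decre}, Corollary~\ref{C_wave_repr}) and the Hausdorff convergence $\{u^\nu>w\}\to\{u>w\}$, together with the relabelling of the indices $j$ when components of $\{u^\nu>w\}$ disappear. The layered extraction of subsequences (a $w$--dependent one inside a statement which is an integral over $w$) is harmless once absorbed by Fatou. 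The analytic core — Helly compactness, made available for a.e.\ $w$ precisely by the uniform bound of Theorem~\ref{T_quadratic}, and lower semicontinuity of total variation under $L^1$ convergence — is routine.
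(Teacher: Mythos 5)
Your argument is correct and follows exactly the route the paper intends: the paper offers no written proof beyond the remark that ``the convergence of the curves $\gamma_{j,w}(t)$ as shown in the proof of Theorem \ref{T_right_cts_u} gives the estimate,'' i.e.\ passing the uniform bound of Theorem \ref{T_quadratic} to the limit along the level-set curves. Your write-up simply supplies the details of that assertion (change of variables via Proposition \ref{P_map_s_jw}, Fatou in $w$, Helly plus lower semicontinuity of the total variation in $t$, and the check that $\liminf_\nu \mathtt T^\nu_{j,w} \geq \mathtt T_{j,w}$), and is a faithful completion of the paper's one-line proof.
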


The measure
\begin{equation*}
\xi^\mathrm{inter} := \mathtt X_\sharp \bigg( \int_J \partial_t \sigma(s) ds \bigg)
\end{equation*}
will be called the \emph{interaction measure}.

\section{Pointwise regularity}
\label{S_pointwise_regul}

This last section concerns the additional regularity which can be deduced from the wave representation.

We recall that a spacelike curve $\R \ni x \mapsto \tau(x) \in \R^+$ is a Lipschitz curve such that $|\tau'| \leq \Lambda^{-1}$, where $\Lambda$ is given by \eqref{E_Lambda_def}. For any $s \in J$ and spacelike curve $\tau$ let $\mathtt t_\tau(s)$ be the unique time such that
\begin{equation*}
\mathtt t_\tau(s) = \tau \big( \mathtt X(\mathtt t_\tau(s),s) \big).
\end{equation*}

Let $\tau$ be a spacelike curve and fix two points $(\tau(x),x)$, $(\tau(x'),x')$, $x \leq x'$. Let
\begin{equation*}
s \in \mathtt X^{-1}(\tau(x),x), \qquad s' \in \mathtt X^{-1}(\tau(x'),x'),
\end{equation*}
with $s \leq s'$.

The first lemma is an immediate consequence of the wave representation.

\begin{lemma}
\label{L_exact_canc}
The following holds: if $\tau' \leq \tau$ is another spacelike curve, then
\begin{equation*}
\begin{split}
\tv{u \circ \tau,(x,x')}& + \big| \mathtt u(s) - (u \circ \tau)(x+) \big| + \big| \mathtt u(s') - (u \circ \tau)(x'-) \big| \\
=&~ \tv{u \circ \tau',(\mathtt X(\mathtt t_{\tau'}(s),s),\mathtt X(\mathtt t_{\tau'}(s'),s')} \\
&~ + \big| \mathtt u(s) - (u \circ \tau')(\mathtt X(\mathtt t_{\tau'}(s),s)+) \big| + \big| \mathtt u(s') - (u \circ \tau')(\mathtt X(\mathtt t_{\tau'}(s'),s')-) \big| \\
&~ - \mathcal L^1 \Big( \Big\{ s'' \in (s,s'), \mathtt t_{\tau'}(s'') < \mathtt T(s'') \leq \mathtt t_{\tau}(s'') \Big\} \Big).
\end{split}
\end{equation*}
\end{lemma}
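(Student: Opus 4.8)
The plan is to reduce the stated identity to a single ``master formula'': for an \emph{arbitrary} spacelike curve $\sigma$, points $z_1\le z_2$, and waves $r_1\in\mathtt X^{-1}(\sigma(z_1),z_1)$, $r_2\in\mathtt X^{-1}(\sigma(z_2),z_2)$ with $r_1\le r_2$, one should have
\begin{equation*}
\tv{u\circ\sigma,(z_1,z_2)}+\big|\mathtt u(r_1)-(u\circ\sigma)(z_1+)\big|+\big|\mathtt u(r_2)-(u\circ\sigma)(z_2-)\big|=\mathcal L^1\Big(\big\{\,r\in(r_1,r_2):\mathtt t_\sigma(r)<\mathtt T(r)\,\big\}\Big),
\end{equation*}
with the convention $\mathtt t_\sigma(r):=+\infty$ if the wave $r$ does not meet $\sigma$ while alive. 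Once this is known the lemma is pure set algebra, so I would put the effort into the master formula. To prove it, introduce the trace $\mathtt Y_\sigma(r):=\mathtt X(\mathtt t_\sigma(r),r)$ of the wave $r$ on $\sigma$, defined on $\{\mathtt t_\sigma(r)<\mathtt T(r)\}$: it is well defined because $t\mapsto\mathtt X(t,r)$ is $\Lambda$-Lipschitz (Theorem \ref{T_right_cts_u}, or Proposition \ref{P_cancell_points} together with \eqref{E_Lambda_def}) while $\sigma$ is spacelike, so the characteristic $t\mapsto\mathtt X(t,r)$ crosses $\sigma$ at most once; and $\mathtt Y_\sigma$ is nondecreasing because $s\mapsto\mathtt X(t,s)$ is increasing for every $t$, so distinct characteristics keep their order. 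Reading condition \eqref{Cond_2_entrop} along horizontal slices and patching along $\sigma$ by the coarea formula yields the Lagrangian identity $|D(u\circ\sigma)|=(\mathtt Y_\sigma)_\sharp\big(\chi_{\{\mathtt t_\sigma<\mathtt T\}}\mathcal L^1\big)$, whence $\tv{u\circ\sigma,(z_1,z_2)}=\mathcal L^1\big((\overline r,\underline r\,)\cap\{\mathtt t_\sigma<\mathtt T\}\big)$ with $\overline r:=\sup\mathtt Y_\sigma^{-1}(z_1)$, $\underline r:=\inf\mathtt Y_\sigma^{-1}(z_2)$. Condition \eqref{Cond_3_entrop} makes $\mathtt u$ monotone with unit slope along each fibre $\mathtt X^{-1}(\sigma(z_i),z_i)$, so the two boundary terms equal $\mathcal L^1((r_1,\overline r\,])$ and $\mathcal L^1([\underline r,r_2))$ (all waves in these fibres being alive, as they are present at the crossing time); adding the three contributions and using $(r_1,r_2)=(r_1,\overline r\,]\cup(\overline r,\underline r\,)\cup[\underline r,r_2)$ — a disjoint union when $z_1<z_2$, the case $z_1=z_2$ being checked directly — gives the master formula.

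Granting this, apply the master formula with $\sigma=\tau$, $(z_1,z_2)=(x,x')$, $(r_1,r_2)=(s,s')$: the left-hand side of the lemma equals $\mathcal L^1(\{\,r\in(s,s'):\mathtt t_\tau(r)<\mathtt T(r)\,\})$. Since $\tau'\le\tau$ we have $\mathtt t_{\tau'}(s)\le\mathtt t_\tau(s)=\tau(x)<\mathtt T(s)$, and similarly for $s'$, so $s$ and $s'$ are alive where they meet $\tau'$; hence $s\in\mathtt X^{-1}(\tau'(y),y)$ with $y:=\mathtt X(\mathtt t_{\tau'}(s),s)$, $s'\in\mathtt X^{-1}(\tau'(y'),y')$ with $y':=\mathtt X(\mathtt t_{\tau'}(s'),s')$, and $y\le y'$ by order-preservation of the characteristics. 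The master formula with $\sigma=\tau'$, $(z_1,z_2)=(y,y')$, $(r_1,r_2)=(s,s')$ then identifies the first three terms on the right-hand side of the lemma with $\mathcal L^1(\{\,r\in(s,s'):\mathtt t_{\tau'}(r)<\mathtt T(r)\,\})$. Finally, $\tau'\le\tau$ forces $\mathtt t_{\tau'}(r)\le\mathtt t_\tau(r)$ for every $r$, so
\begin{equation*}
\big\{r\in(s,s'):\mathtt t_{\tau'}(r)<\mathtt T(r)\big\}=\big\{r\in(s,s'):\mathtt t_\tau(r)<\mathtt T(r)\big\}\ \sqcup\ \big\{r\in(s,s'):\mathtt t_{\tau'}(r)<\mathtt T(r)\le\mathtt t_\tau(r)\big\},
\end{equation*}
and taking $\mathcal L^1$-measures gives exactly the asserted identity.

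The expected main obstacle is the Lagrangian identity $|D(u\circ\sigma)|=(\mathtt Y_\sigma)_\sharp(\chi_{\{\mathtt t_\sigma<\mathtt T\}}\mathcal L^1)$ for a general spacelike curve, since condition \eqref{Cond_2_entrop} only states its horizontal-slice version; one must also fix the meaning of $(u\circ\sigma)(z_i\pm)$ at points where a positive $\mathcal L^1$-mass of waves is cancelled exactly on $\sigma$, and dispose of the usual $\mathcal L^1$-null exceptional set of levels. The cleanest route around these technicalities is to prove the master formula first for the wavefront approximations $u^\nu$, where it is finite bookkeeping, and then pass to the limit, using $u^\nu\to u$ in $L^1$ (hence $u^\nu\circ\sigma\to u\circ\sigma$ for $\mathcal L^1$-a.e.\ $\sigma$ in a one-parameter family), the convergence of the wave representations of Section \ref{Ss_lagr_repr_real_u}, lower semicontinuity of the total variation for the inequality ``$\le$'', the convergence of $\mathtt t^\nu_\sigma$ and $\mathtt T^\nu$ for the reverse inequality, and continuity of both sides in $\sigma$ to remove the a.e.\ restriction.
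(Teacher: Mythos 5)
Your proposal is correct and follows essentially the same route as the paper, whose entire proof is the one-line remark that the identity is immediate from the definition of total variation once one knows that the variation of $u$ along a spacelike curve between two waves equals the Lebesgue measure of the waves still alive at the crossing --- exactly your master formula, after which the cancellation term is the set-algebra bookkeeping you describe. Your explicit flagging of the one genuine technical point (extending the horizontal-slice identity $|D_x u(t)|=\mathtt X(t)_\sharp\big(|\mathtt a(t)|\mathcal L^1\big)$ to a general spacelike curve, either by coarea patching or by proving the formula for $u^\nu$ and passing to the limit) addresses an issue the paper silently glosses over, and both routes you suggest are viable.
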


Note that the last term is equal to the cancellation occurring the the region bounded by the two spacelike curves and the interval $(s,s')$.

\begin{proof}
The proof is immediate from the definition of total variation.
\end{proof}

Observing that $\mathtt u(s)$ is constant along the trajectory $t \mapsto \mathtt X(t,s)$, we obtain the corresponding result for the $L^\infty$-estimate.

\begin{lemma}
\label{L_exact_l_infty_decr}
In the same setting as above, it holds
\begin{equation*}
\begin{split}
\sup_{(x,x')} \Big\{ \big|u \circ \tau - \bar u\big| + \big| \mathtt u(s) - \bar u \big|& + \big| \mathtt u(s') - \bar u \big| \Big\} + \mathcal L^1 \Big( \Big\{ s'' \in (s,s'), \mathtt t_{\tau'}(s'') < \mathtt T(s'') \leq \mathtt t_{\tau}(s'') \Big\} \Big) \\
\geq&~ \sup \Big\{ \big|u \circ \tau' - \bar u \big| + \big| \mathtt u(s) - \bar u \big| + \big| \mathtt u(s') - \bar u \big|, (\mathtt X(\mathtt t_{\tau'}(s),s),\mathtt X(\mathtt t_{\tau'}(s'),s') \Big\}, 
\end{split}
\end{equation*}
for two spacelike curves $\tau' \leq \tau$ and for all $\bar u \in \R$.
\end{lemma}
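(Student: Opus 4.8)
The statement is the $L^\infty$/oscillation analogue of Lemma~\ref{L_exact_canc}, and I would prove it the same way, reading every quantity through the wave representation. First I would restrict the wave representation to a spacelike curve $\rho$ — legitimate because of finite speed of propagation, cf.\ \eqref{E_Lambda_def} — and record that the values taken by $u\circ\rho$ strictly between the crossing points $\mathtt X(\mathtt t_\rho(s),s)$ and $\mathtt X(\mathtt t_\rho(s'),s')$ are exactly the values $\mathtt u(p)$ carried by the waves $p\in(s,s')$ surviving up to $\rho$, i.e.\ with $\mathtt T(p)\geq\mathtt t_\rho(p)$, while $\mathtt u(s)$ and $\mathtt u(s')$ account for the two jumps that may sit exactly at the endpoints (here one uses that $\mathcal S$ is locally constant on the cluster of waves occupying a common position at a fixed time, so $\mathtt u$ is monotone along such a cluster). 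Since in addition $\mathtt u(p)$ does not depend on $t$ along $t\mapsto\mathtt X(t,p)$ — the observation made just before the statement — this rewrites both sides of the inequality purely in terms of the sets $A_\rho:=\{p\in[s,s']:\mathtt T(p)\geq\mathtt t_\rho(p)\}$ for $\rho=\tau$ and $\rho=\tau'$, plus the boundary terms $|\mathtt u(s)-\bar u|$, $|\mathtt u(s')-\bar u|$ common to both sides.

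Then I would invoke two elementary facts. Because $\tau'\leq\tau$ and a wave trajectory meets a spacelike curve exactly once, $\mathtt t_{\tau'}(p)\leq\mathtt t_\tau(p)$ for every $p$, so $A_\tau\subset A_{\tau'}$ and $(A_{\tau'}\setminus A_\tau)\cap(s,s')$ is precisely the set whose measure appears on the left. And $\mathtt u$ is $1$-Lipschitz with $\tfrac{d}{ds}\mathtt u=\mathcal S\in\{-1,1\}$ a.e.\ (condition \eqref{Cond_3_entrop}). So, assuming (otherwise nothing is to prove) that some $p^\ast\in A_{\tau'}$ has $|\mathtt u(p^\ast)-\bar u|$ strictly larger than $\sup_{A_\tau}|\mathtt u-\bar u|$ — say $\mathtt u(p^\ast)>\bar u$ — the connected component $(a,b)\ni p^\ast$ of $\{p:\mathtt u(p)>\bar u+\sup_{A_\tau}|\mathtt u-\bar u|\}$ satisfies $(a,b)\subset(s,s')$ (since $s,s'\in A_\tau$ keeps $\mathtt u(s),\mathtt u(s')$ below that level) and $(a,b)\cap A_\tau=\emptyset$ (those waves are too extreme to survive to $\tau$). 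Writing $\mathtt u(p^\ast)-\mathtt u(a)=\int_a^{p^\ast}\tfrac{d}{ds}\mathtt u$ and splitting the integral over $A_{\tau'}$ and over $[a,p^\ast]\setminus A_{\tau'}$ — the latter a union of waves cancelled before $\tau'$ — the contribution of each maximal cancelled subinterval is $\leq 0$, because a cancelled pair of waves has equal strength and hence opposite equal increments of $\mathtt u$; thus $\mathtt u(p^\ast)-\bar u-\sup_{A_\tau}|\mathtt u-\bar u|\leq\mathcal L^1\big([a,p^\ast]\cap A_{\tau'}\big)$, and $[a,p^\ast]\cap A_{\tau'}\subset(A_{\tau'}\setminus A_\tau)\cap(s,s')$. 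Adding $\sup_{A_\tau}|\mathtt u-\bar u|$ back and undoing the first step gives the asserted inequality.

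The point I expect to cost the most care is precisely this last one: to know that the part of the pocket $(a,b)$ lost between $\tau'$ and $\tau$ has measure at least the excess $\mathtt u(p^\ast)-\bar u-\sup_{A_\tau}|\mathtt u-\bar u|$, one needs the \emph{net-zero} property of cancelled clusters, which is transparent for the wavefront approximations from the construction in Proposition~\ref{P_uniq_wave_repr} but for the entropy solution $u$ must be carried through the limit; a clean alternative would be to apply Lemma~\ref{L_exact_canc} directly to $\tau',\tau$ with the window $(s,s')$ replaced by the one cut out by the endpoints of the pocket. Everything else is routine bookkeeping with the one-sided limits of $u\circ\tau$, $u\circ\tau'$ at $x$ and $x'$, which is exactly the role of the boundary terms $|\mathtt u(s)-\bar u|$ and $|\mathtt u(s')-\bar u|$.
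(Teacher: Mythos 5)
Your mechanism is the same as the paper's, not a different route. The proof in the paper is the single chain
\begin{equation*}
\sup\nolimits_{\tau'}(\cdots)-\sup\nolimits_{\tau}(\cdots)\ \le\ \mathrm{TV}_{\tau'}(\cdots)-\mathrm{TV}_{\tau}(\cdots)\ =\ \mathcal L^1\Big(\big\{s''\in(s,s'):\mathtt t_{\tau'}(s'')<\mathtt T(s'')\le\mathtt t_{\tau}(s'')\big\}\Big),
\end{equation*}
where the equality is Lemma \ref{L_exact_canc} and the first inequality is left as an ``observation''. Since that inequality is false for arbitrary pairs of BV functions with matched endpoints, it can only be justified through the wave representation, and your reduction of both suprema to $\sup_{A_\rho}|\mathtt u-\bar u|$ (via condition \eqref{Cond_3_entrop} and the monotonicity of $\mathtt u$ on a cluster $\mathtt X^{-1}(t,x)$) together with the pocket argument is exactly the justification the paper omits. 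So you are expanding the paper's proof rather than replacing it.

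The step you flag is, however, a genuine gap as written, and your fallback does not close it. Lemma \ref{L_no_canc_proj} gives net-zero signed measure only for a \emph{complete} cancellation cluster, i.e.\ the full set of waves dying at a given point; in $\int_{(a,p^\ast)\setminus A_{\tau'}}\mathcal S$ you integrate each cluster only over its intersection with the pocket. Every cluster is an interval in $s$ not containing the surviving wave $p^\ast$, so the only cluster that can be cut is the one through the left endpoint $a$: if it straddles $a$, its restriction to $(a,p^\ast)$ has signed measure $\mathtt u(\inf C_a)-\mathtt u(a)$, which is strictly positive whenever $\inf C_a$ lies in another component of $\{\mathtt u>\bar u+M\}$, and your estimate then loses exactly that amount. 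The fallback via Lemma \ref{L_exact_canc} applied to the pocket does not repair this, because that lemma needs its anchoring waves to reach the upper curve $\tau$, whereas by construction every wave in $(a,b)$ dies before reaching $\tau$. The correct fix is to anchor the pocket differently: take $a^\ast:=\sup\{p\le p^\ast: p\in A_\tau\cup\{s\}\}$ instead of the boundary of the superlevel set of $\mathtt u$. Then $\mathtt u(a^\ast)\le\bar u+M$ (up to passing to limits in $A_\tau$), the interval $(a^\ast,p^\ast)$ still meets no wave of $A_\tau$, and since neither endpoint dies before $\tau'$ every cancellation cluster meeting $(a^\ast,p^\ast)$ is entirely contained in it, so Lemma \ref{L_no_canc_proj} gives $\int_{(a^\ast,p^\ast)\setminus A_{\tau'}}\mathcal S=0$ and your inequality $\mathtt u(p^\ast)-\bar u-M\le\mathcal L^1\big((a^\ast,p^\ast)\cap(A_{\tau'}\setminus A_\tau)\big)$ follows. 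With that modification the argument is sound.
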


\begin{proof}
One just needs to observe that
\begin{equation*}
\begin{split}
\sup \Big\{ \big|u \circ \tau' - \bar u \big|& + \big| \mathtt u(s) - \bar u \big| + \big| \mathtt u(s') - \bar u \big|, (\mathtt X(\mathtt t_{\tau'}(s),s),\mathtt X(\mathtt t_{\tau'}(s'),s') \Big\} \\
&~ - \sup_{(x,x')} \Big\{ \big|u \circ \tau - \bar u\big| + \big| \mathtt u(s) - \bar u \big| + \big| \mathtt u(s') - \bar u \big| \Big\} \crcr
\leq&~ \bigg[ \tv{u \circ \tau',(\mathtt X(\mathtt t_{\tau'}(s),s),\mathtt X(\mathtt t_{\tau'}(s'),s')} \\
&~ \qquad \quad + \big| \mathtt u(s) - (u \circ \tau')(\mathtt X(\mathtt t_{\tau'}(s),s)+) \big| + \big| \mathtt u(s') - (u \circ \tau')(\mathtt X(\mathtt t_{\tau'}(s'),s')-) \big| \bigg] \crcr
&~ \quad - \bigg[ \tv{u \circ \tau,(x,x')} + \big| \mathtt u(s) - (u \circ \tau)(x+) \big| + \big| \mathtt u(s') - (u \circ \tau)(x'-) \big| \bigg] \\
=&~ \mathcal L^1 \Big( \Big\{ s'' \in (s,s'), \mathtt t_{\tau'}(s'') < \mathtt T(s'') \leq \mathtt t_{\tau}(s'') \Big\} \Big).
\end{split}
\end{equation*}
\end{proof}

Using the finite speed of propagation $\Lambda$, one obtains the following well known result.

\begin{corollary}
\label{C_triangl_dep_tv}
In the domain of dependence
\begin{equation*}
\triangle_{\bar t,a,b} := \Big\{ (t,x), \bar t \leq t \leq (b - a)/\Lambda, a + \Lambda (t-\bar t) \leq x \leq b - \Lambda (t-\bar t) \Big\}, \qquad a < b,
\end{equation*}
it holds
\begin{equation*}
\tv{u(\bar t),(a,b)} \geq \tv{u(t), \triangle(t)} + \xi^\mathrm{inter} \big( \triangle_{\bar t,a,b} \cap (\bar t,t] \big),
\end{equation*}
\begin{equation*}
\sup_{(a,b)} \big|u(\bar t) - \bar u\big| \leq \sup \big\{ \big|u(t) - \bar u\big|, \triangle(t) \big\} + \xi^\mathrm{inter} \big( \triangle_{\bar t,a,b} \cap (\bar t,t] \big).
\end{equation*}
\end{corollary}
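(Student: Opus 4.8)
The idea is to apply Lemma~\ref{L_exact_canc} and Lemma~\ref{L_exact_l_infty_decr} to the pair of spacelike curves bounding the truncated cone $\triangle_{\bar t,a,b}\cap\{\bar t\le t'\le t\}$, and then to recognize the correction term they produce as the mass that the measure in the statement assigns to that region. I take $\tau'\equiv\bar t$, which is trivially spacelike, and $\tau$ the curve which equals the constant $t$ on the slice $\triangle(t)=(a+\Lambda(t-\bar t),\,b-\Lambda(t-\bar t))$ and descends outside of it with the maximal admissible slope $\Lambda^{-1}$ until it reaches the level $\bar t$, where it is then kept constant; this $\tau$ is spacelike and $\tau'\le\tau$. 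As endpoints on $\tau$ I take $x=a+\Lambda(t-\bar t)$ and $x'=b-\Lambda(t-\bar t)$, and for the waves I choose $s:=\sup\mathtt X^{-1}(t,x)$, $s':=\inf\mathtt X^{-1}(t,x')$; by property~\eqref{Cond_3_entrop} of the wave representation these choices make $\mathtt u(s)=(u\circ\tau)(x+)$ and $\mathtt u(s')=(u\circ\tau)(x'-)$, so the two boundary corrections on $\tau$ in Lemma~\ref{L_exact_canc} vanish and $\tv{u\circ\tau,(x,x')}=\tv{u(t),\triangle(t)}$.

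Next I would invoke finite speed of propagation for the waves. By Proposition~\ref{P_cancell_points} and \eqref{E_Lambda_def} each trajectory $t'\mapsto\mathtt X(t',s'')$ is $\Lambda$-Lipschitz, hence $\mathtt X(\bar t,s)\ge a$, $\mathtt X(\bar t,s')\le b$, so $(\mathtt X(\bar t,s),\mathtt X(\bar t,s'))\subset(a,b)$, and the trajectory of any wave $s''\in(s,s')$ stays in $\triangle_{\bar t,a,b}$ for $t'\in[\bar t,\mathtt t_\tau(s'')]$. Substituting $\tau,\tau',s,s'$ into Lemma~\ref{L_exact_canc} (so that $\mathtt t_{\tau'}\equiv\bar t$) then leaves, besides $\tv{u(t),\triangle(t)}$, only the sum $\tv{u(\bar t),(\mathtt X(\bar t,s),\mathtt X(\bar t,s'))}+|\mathtt u(s)-u(\bar t,\mathtt X(\bar t,s)+)|+|\mathtt u(s')-u(\bar t,\mathtt X(\bar t,s')-)|$ minus the $\mathcal L^1$-defect. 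Since $\mathtt u(s)$, being the value of the wave $s$, lies between the one-sided limits $u(\bar t,\mathtt X(\bar t,s)\pm)$ (again by~\eqref{Cond_3_entrop}, read at time $\bar t$), and likewise for $s'$, that sum is at most $\tv{u(\bar t),[\mathtt X(\bar t,s),\mathtt X(\bar t,s')]}\le\tv{u(\bar t),(a,b)}$. Finally, unravelling the definition of the push-forward measure appearing in the statement and using the confinement of the waves $(s,s')$ to $\triangle_{\bar t,a,b}$ just noted, the $\mathcal L^1$-defect of Lemma~\ref{L_exact_canc} is exactly $\xi^\mathrm{inter}\big(\triangle_{\bar t,a,b}\cap(\bar t,t]\big)$; this proves the first inequality, and the second one follows verbatim with Lemma~\ref{L_exact_l_infty_decr} in place of Lemma~\ref{L_exact_canc}.

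The part I expect to be routine is the explicit spacelike prolongation of $\tau$ and the cone/Lipschitz bounds; the delicate point is the bookkeeping of the boundary corrections — choosing the extreme waves $s,s'$ so that the corrections along the later curve $\tau$ disappear while those along $\tau'=\bar t$ are absorbed into $\tv{u(\bar t),(a,b)}$ — together with checking that the finite-speed cone matches the $s$-interval $(s,s')$ tightly enough that the cancellation recorded by the lemma coincides with the mass of $\triangle_{\bar t,a,b}\cap(\bar t,t]$ claimed in the statement.
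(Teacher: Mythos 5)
Your strategy --- $\tau'\equiv\bar t$, $\tau$ the upper boundary of the truncated cone, $s=\sup\mathtt X^{-1}(t,x)$ and $s'=\inf\mathtt X^{-1}(t,x')$ so that condition \eqref{Cond_3_entrop} kills the boundary corrections on $\tau$, and finite speed to confine the waves of $(s,s')$ to the cone --- is exactly the derivation the paper intends (it gives no proof, presenting the corollary as ``well known''). The step that fails is the final identification ``the $\mathcal L^1$-defect \dots is exactly $\xi^\mathrm{inter}\big(\triangle_{\bar t,a,b}\cap(\bar t,t]\big)$''. The defect in Lemmas \ref{L_exact_canc} and \ref{L_exact_l_infty_decr} is $\mathcal L^1\big(\{s''\in(s,s'):\mathtt t_{\tau'}(s'')<\mathtt T(s'')\leq\mathtt t_\tau(s'')\}\big)$, the mass of waves that \emph{die} between the two curves: this is a piece of the cancellation measure $\xi^\mathrm{canc}=\big(\mathtt X\circ(\mathtt T,\id)\big)_\sharp\mathcal L^1$ and is unrelated to the interaction measure $\xi^\mathrm{inter}=\mathtt X_\sharp\big(\int\partial_t\sigma(s)\,ds\big)$, which records changes of speed; Section \ref{Sss_canc_vs_inter} shows the two measures are not mutually absolutely continuous, so no unravelling of definitions can equate them (the statement's $\xi^\mathrm{inter}$ must be read as $\xi^\mathrm{canc}$, as its use in Proposition \ref{P_cont_out_jumps} confirms). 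Moreover, even for $\xi^\mathrm{canc}$ the defect counts only waves with $s''\in(s,s')$, whereas waves outside this interval are merely $\Lambda$-Lipschitz, not confined to the complement of the cone, and may die inside $\triangle_{\bar t,a,b}\cap(\bar t,t]$; hence the defect is only $\leq\xi^\mathrm{canc}\big(\triangle_{\bar t,a,b}\cap(\bar t,t]\big)$, and since in the total-variation inequality this quantity sits on the side being bounded from below, your chain yields only $\tv{u(\bar t),(a,b)}\geq\tv{u(t),\triangle(t)}+\text{(defect)}$. To close the argument you must not discard the slack in your step ``$\leq\tv{u(\bar t),(a,b)}$'': by condition \eqref{Cond_2_entrop}(b) that slack equals the mass of the waves alive at time $\bar t$ in $(a,b)$ but outside $[s,s']$, and therefore dominates the missing cancellation $\xi^\mathrm{canc}\big(\triangle_{\bar t,a,b}\cap(\bar t,t]\big)$ minus the defect.

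The claim that ``the second one follows verbatim'' is also not correct. With your choices Lemma \ref{L_exact_l_infty_decr} controls $\sup|u(\bar t)-\bar u|$ only over $(\mathtt X(\bar t,s),\mathtt X(\bar t,s'))$, which is in general a proper subinterval of $(a,b)$, so the left-hand side $\sup_{(a,b)}|u(\bar t)-\bar u|$ of the printed inequality is never reached. This is not a removable technicality: for a linear flux $f(u)=\lambda u$ with $0<|\lambda|<\Lambda$, a spike of $u(\bar t)$ supported near $a$ exits the cone with no cancellation and no interaction, so the printed inequality is false as stated. The two sides must be interchanged --- that is the form in which the corollary is applied in Proposition \ref{P_cont_out_jumps}, to propagate smallness of the oscillation forward in time --- and in that form the estimate does follow from your setup, by noting that every $y\in\triangle(t)$ is reached by waves $s_y\in\mathtt X^{-1}(t,y)$ which at time $\bar t$ lie in $[a,b]$ and whose values $\mathtt u(s_y)$ are sandwiched between one-sided limits of $u(\bar t)$ there. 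You should restate and prove that corrected version rather than reproduce the printed one.
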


As a first regularity estimate, we show that the set
\begin{equation*}
\Xi := \big\{ (t,x) : u(t,x-) \not= u(t,x+) \big\}
\end{equation*}
is $1$-rectifiable.

\begin{lemma}
\label{L_jump_set}
The set $\Xi$ is $1$-rectifiable.
\end{lemma}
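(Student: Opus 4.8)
The plan is to prove that $\Xi$ is contained in a countable union of Lipschitz curves — which is exactly $1$-rectifiability — by using the wave trajectories $t\mapsto\mathtt X(t,s)$ of the wave representation provided by Theorem \ref{T_wave_repr_cont}. Throughout write $J=(0,\tv{u_0}]$.

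The first step is to read the size of a jump of $u(\bar t,\cdot)$ at a point in terms of the waves sitting there. Fix $(\bar t,\bar x)\in\Xi$. Evaluating the second identity of condition \eqref{Cond_2_entrop} in Definition \ref{D_wave_repres} on the Borel set $\{\bar x\}$, and using that $|\mathtt a(\bar t,s)|=\chi_{\{\bar t<\mathtt T(s)\}}$ by the form of $\mathtt a$ in condition \eqref{Cond_1_entrop}, one gets
\begin{equation*}
\big|u(\bar t,\bar x+)-u(\bar t,\bar x-)\big|=|D_x u(\bar t)|\big(\{\bar x\}\big)=\mathcal{L}^1\Big(\mathtt X^{-1}(\bar t,\bar x)\cap\{s:\mathtt T(s)>\bar t\}\Big).
\end{equation*}
Since $s\mapsto\mathtt X(\bar t,s)$ is nondecreasing, the set $\mathtt X^{-1}(\bar t,\bar x)=\{s:\mathtt X(\bar t,s)=\bar x\}$ is an interval contained in $J$, and by the displayed identity its length is at least $\big|u(\bar t,\bar x+)-u(\bar t,\bar x-)\big|>0$.

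Next I would stratify $\Xi$ by jump size: for $k\in\N$ set $\Xi^{(k)}:=\{(t,x)\in\Xi:|u(t,x+)-u(t,x-)|\ge 1/k\}$, so that $\Xi=\bigcup_{k\in\N}\Xi^{(k)}$. Fix the finite set $S_k:=\tfrac{1}{3k}\Z\cap J$. If $(t,x)\in\Xi^{(k)}$, then by the previous step $\mathtt X^{-1}(t,x)\subseteq J$ is an interval of length $\ge 1/k$, hence it contains some $s\in S_k$; for that $s$ one has $\mathtt X(t,s)=x$, i.e. $(t,x)\in\mathrm{Graph}(t\mapsto\mathtt X(t,s))$. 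Therefore $\Xi^{(k)}\subseteq\bigcup_{s\in S_k}\mathrm{Graph}(\mathtt X(\cdot,s))$, and since each $t\mapsto\mathtt X(t,s)$ is Lipschitz by condition \eqref{Cond_2_entrop},
\begin{equation*}
\Xi\ \subseteq\ \bigcup_{k\in\N}\bigcup_{s\in S_k}\mathrm{Graph}\big(\mathtt X(\cdot,s)\big)
\end{equation*}
is a countable union of Lipschitz curves, which gives the $1$-rectifiability of $\Xi$.

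I do not expect a genuine obstacle. The one place where something substantial enters is the identity of the first step, relating the jump of $x\mapsto u(\bar t,x)$ at $\bar x$ to the Lebesgue measure of the set of waves located at $(\bar t,\bar x)$: this is precisely the content of the wave representation of Theorem \ref{T_wave_repr_cont}, and it matters that conditions \eqref{Cond_1_entrop}--\eqref{Cond_2_entrop} are available for every time $\bar t$, not merely for $\mathcal L^1$-a.e. $\bar t$. The remaining ingredients — that the preimage of a point under a monotone map is an interval, and that an interval of length $\ge 1/k$ contained in $J$ meets $\tfrac{1}{3k}\Z$ — are elementary.
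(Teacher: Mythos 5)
Your proof is correct and follows essentially the same route as the paper: the key fact in both is that $\mathcal L^1(\mathtt X^{-1}(\bar t,\bar x))\geq |D_x u(\bar t)|(\{\bar x\})>0$ at every point of $\Xi$, so that $\Xi$ is covered by the countably many Lipschitz trajectories $t\mapsto\mathtt X(t,s)$ with $s$ ranging over a countable dense set of $J$ (the paper uses $\Q\cap J$ directly where you stratify by jump size and use the grids $\tfrac{1}{3k}\Z$, a purely cosmetic difference). Your write-up is in fact more explicit than the paper's one-line argument.
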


\begin{proof}
The proof is an immediate consequence of the wave representation: one in fact observes that this set is covered by
\begin{equation*}
\Xi' := \big\{ (t,x) : \mathtt X^{-1}(t,x) \ \text{is not a singleton} \big\},
\end{equation*}
and this is covered by the dense set of curves $t \mapsto \mathtt X(t,q)$, $q \in \Q \cap J$, due to the fact that $\mathcal L^1(\mathtt X^{-1}(t,x)) \geq |D_x u|(t,x)$.
%
\end{proof}

We now prove the first regularity result of this section. This result is more or less already known (see \cite{Liu_adm}), but here we can use Corollary \ref{C_canc_on_jumps} to improve slightly the result, showing that the countable set of strong interactions/cancellations are on $\Xi$.

\begin{proposition}
\label{P_cont_out_jumps}
The function $u$ is continuous outside $\Xi$.
\end{proposition}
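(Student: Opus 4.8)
The statement to prove is that $u$ is continuous at every point $(\bar t,\bar x)\notin\Xi$. The plan is to argue by contradiction using the wave representation $(\mathtt X,\mathtt u,\mathtt a)$ of Theorem~\ref{T_wave_repr_cont}, together with the tame oscillation bound in the domain of dependence (Corollary~\ref{C_triangl_dep_tv}) and the fact that the cancellation measure is concentrated on $\Xi$ (Corollary~\ref{C_canc_on_jumps}).

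\emph{Step 1: reduce continuity to a triangular oscillation estimate.} Fix $(\bar t,\bar x)\notin\Xi$. Since $u(\bar t,\cdot)$ is right continuous and BV with $u(\bar t,\bar x-)=u(\bar t,\bar x+)=:\bar u$, for every $\eta>0$ there exist $a<\bar x<b$ with $\tv{u(\bar t),(a,b)}<\eta$ and $|u(\bar t,x)-\bar u|<\eta$ on $(a,b)$. By the second inequality of Corollary~\ref{C_triangl_dep_tv}, for all $(t,x)\in\triangle_{\bar t,a,b}$ one has
\begin{equation*}
|u(t,x)-\bar u|\leq \sup_{(a,b)}|u(\bar t)-\bar u|+\xi^{\mathrm{inter}}\big(\triangle_{\bar t,a,b}\cap(\bar t,t]\big)\leq \eta+\xi^{\mathrm{inter}}\big(\triangle_{\bar t,a,b}\big).
\end{equation*}
So the only obstruction to continuity is that the interaction measure $\xi^{\mathrm{inter}}$ may have an atom, or substantial mass, arbitrarily close to $(\bar t,\bar x)$.

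\emph{Step 2: control $\xi^{\mathrm{inter}}$ near $(\bar t,\bar x)$ via the waves passing through it.} Here is the main obstacle: the tame estimate alone does not rule out a sequence of interactions accumulating at $(\bar t,\bar x)$ from below. The key point is that only finitely many wave curves $t\mapsto\mathtt X(t,q)$ can pass near $(\bar t,\bar x)$ with uniformly bounded speed variation. More precisely: since $(\bar t,\bar x)\notin\Xi$, we have $\mathtt X^{-1}(\bar t,\bar x)$ a singleton $\{\bar s\}$ (using $\mathcal L^1(\mathtt X^{-1}(\bar t,\bar x))\geq|D_xu|(\bar t,\{\bar x\})=0$), hence $\mathtt u$ restricted to the waves through nearby points is close to $\bar u$. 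Now invoke Theorem~\ref{T_quadr_contin}: $\int_J\tv{\sigma(t,s),[0,\mathtt T(s))}\,ds<\infty$, so for all but an $\mathcal L^1$-small set of waves $s$ the speed $t\mapsto\sigma(t,s)$ has small total variation. Choosing the window $(a,b)$ small enough (and using that $D_xu(\bar t)$ has small mass on $(a,b)$ since $\tv{u(\bar t),(a,b)}<\eta$, hence the set of waves $s$ with $\mathtt X(\bar t,s)\in(a,b)$ has measure $<\eta$), the interaction measure $\xi^{\mathrm{inter}}(\triangle_{\bar t,a,b})=\mathtt X_\sharp\big(\int_J\partial_t\sigma(s)\,ds\big)(\triangle_{\bar t,a,b})$ is controlled by the total speed variation carried by those $<\eta$-much waves, which tends to $0$ as $\eta\to 0$ by absolute continuity of the integral in Theorem~\ref{T_quadr_contin}. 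One must also rule out a contribution of cancellation-type speed jumps near $(\bar t,\bar x)$: but by Corollary~\ref{C_canc_on_jumps} the cancellation measure lives on $\Xi$, and since $(\bar t,\bar x)\notin\Xi$ and $\Xi$ is closed along the relevant curves up to the rectifiable structure (Lemma~\ref{L_jump_set}), by shrinking $(a,b)$ we avoid it; alternatively, the cancellation in $\triangle_{\bar t,a,b}$ is bounded by $\tv{u(\bar t),(a,b)}<\eta$ directly from Lemma~\ref{L_exact_canc}.

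\emph{Step 3: conclude.} Combining Steps 1 and 2: given $\varepsilon>0$, pick $\eta$ so small that $\eta+\xi^{\mathrm{inter}}(\triangle_{\bar t,a,b})<\varepsilon$; then $|u(t,x)-\bar u|<\varepsilon$ for every $(t,x)$ in the (two-sided in time) domain of dependence $\triangle_{\bar t,a,b}$, which contains a full neighborhood of $(\bar t,\bar x)$ in $\{t\geq 0\}$ once $a<\bar x<b$ are fixed, and the analogous estimate for $t<\bar t$ follows by running the domain-of-determinacy argument backward (or simply by the $L^1$-continuity in $t$ together with the level-set structure of Theorem~\ref{T_right_cts_u}, which forces the finitely many curves $\gamma_{j,w}$ bounding $\{u>w\}$ to stay outside a neighborhood of $(\bar t,\bar x)$ for $w$ away from $\bar u$). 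Hence $u$ is continuous at $(\bar t,\bar x)$. The delicate point, as flagged, is the quantitative absorption of $\xi^{\mathrm{inter}}$ near the point, and the cleanest route is to use the absolute continuity with respect to $\mathcal L^1\otimes(\text{wave measure})$ provided by Theorem~\ref{T_quadr_contin} combined with the smallness of the wave-measure of $(a,b)$.
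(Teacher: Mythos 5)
Your forward-in-time argument (Steps 1--2) is essentially sound and corresponds to the paper's one-line appeal to Corollary~\ref{C_triangl_dep_tv} for $t \geq \bar t$; your extra care in controlling $\xi^{\mathrm{inter}}(\triangle_{\bar t,a,b})$ via Theorem~\ref{T_quadr_contin} and the smallness of the wave mass over $(a,b)$ is a legitimate way to make that step quantitative. The genuine gap is the case $t < \bar t$, which is the only nontrivial part of the statement and which you dispose of in one parenthetical sentence. ``Running the domain-of-determinacy argument backward'' is not valid for conservation laws: oscillation is destroyed going forward in time by cancellation, so smallness of the oscillation of $u(\bar t,\cdot)$ near $\bar x$ gives no control on $u$ in a backward cone. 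Concretely, a compression focusing at $(\bar t,\bar x)$ and cancelling against a jump of opposite sign exactly at time $\bar t$ produces $|u(t_i,x_i)-u(\bar t,\bar x)|>\epsilon$ along a sequence $t_i\nearrow\bar t$, $x_i \to \bar x$, while $u(\bar t,\cdot)$ may be continuous at $\bar x$; nothing in your Steps 1--2 excludes this. Your fallback (``$L^1$-continuity in $t$ together with the level-set structure'') is also not a proof: $L^1$-continuity in time gives no pointwise information, and one would still have to show that curves $\gamma_{j,w}$ with $|w-\bar u|>\epsilon$ cannot accumulate at $(\bar t,\bar x)$ from below, i.e.\ cannot be cancelled there --- which is precisely the point at issue.

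The paper closes exactly this gap with the cancellation measure: if $|u(t_i,x_i)-u(\bar t,\bar x)|>\epsilon$ with $t_i\nearrow\bar t$, then Lemma~\ref{L_exact_l_infty_decr} (the $L^\infty$ analogue of the exact cancellation balance, applied with $\bar u=u(\bar t,\bar x)$ between a spacelike curve through $(t_i,x_i)$ and one at time $\bar t$) forces $\xi^{\mathrm{canc}}(O)>\epsilon/2$ for every sufficiently small neighbourhood $O$ of $(\bar t,\bar x)$, hence an atom of $\xi^{\mathrm{canc}}$ at $(\bar t,\bar x)$; Corollary~\ref{C_canc_on_jumps} then places $(\bar t,\bar x)$ in $\Xi$, a contradiction. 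You invoke Corollary~\ref{C_canc_on_jumps} only to discard cancellations inside the forward triangle, which is not where it is needed: it is the key tool for the backward direction. To repair your proof, replace the parenthetical in Step~3 by this contradiction argument.
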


\begin{proof}
Fix a point $(\bar t,\bar x) \notin \Xi$. By Corollary \ref{C_triangl_dep_tv} one has only to study the case $t < \bar t$.

Assume by contradiction that there exists a sequence of $t_i \nearrow \bar t$, $x_i \to x$ such that
\begin{equation*}
\big| u(t_i,x_i) - u(\bar t,\bar x) \big| > \e.
\end{equation*}
By Lemma \ref{L_exact_l_infty_decr} applied with $\bar u = u(\bar t,\bar x)$, we deduce that in all sufficiently small neighborhoods $O$ of $(\bar t,\bar x)$ a cancellation of size $\xi^\mathrm{canc}(O) > \e/2$ occurs, being $\bar x$ a continuity point of $u(\bar t)$.
%
%
It thus follows that $(\bar t,\bar x)$ is a Dirac-delta for the cancellation measure $\xi^\mathrm{canc}$, which implies by Corollary \ref{C_canc_on_jumps} that $(\bar t,\bar x) \in \Xi$, contradicting the assumption.
\end{proof}

We finally study the regularity across the set $\Xi$, extending the analogous result for genuinely nonlinear scalar conservation laws, see \cite{Bre}. For any point $(\bar t,\bar x)$ consider the set
\begin{equation*}
\mathtt S(\bar t,\bar x) :=
\begin{cases}
\mathtt X^{-1}(\bar t,\bar x) & \text{if single valued}, \\
\text{interior of} \ \mathtt X^{-1}(\bar t,\bar x) & \text{otherwise}.
\end{cases}
\end{equation*}
Define the Lipschitz curves
\begin{equation*}
\gamma^-_{(\bar t,\bar x)}(t) := \inf \big\{ \mathtt X(t,s): s \in \mathtt S(\bar t,\bar x) \big\}, \quad \gamma^+_{(\bar t,\bar x)}(t) := \sup \big\{ \mathtt X(t,s): s \in \mathtt S(\bar t,\bar x) \big\}.
\end{equation*}
The existence of these curves is a consequence of the fact that $\mathtt X$ (as a maximal monotone function) is surjective.
Notice that since the interval of existence is an open set, then these curves can be prolonged for $t > \bar t$.

\begin{theorem}
\label{T_regul_jumps}
For every point $(\bar t,\bar x)$ it holds
\begin{equation*}
\lim_{\substack{(t,x) \to (\bar t,\bar x) \\ x < \gamma^-_{\bar t,\bar x}(t)}} \big| u(t,x) - u(\bar t,\bar x-) \big| = 0, \quad \lim_{\substack{(t,x) \to (\bar t,\bar x) \\ x > \gamma^+_{\bar t,\bar x}(t)}} \big| u(t,x) - u(\bar t,\bar x+) \big| = 0.
\end{equation*}
\end{theorem}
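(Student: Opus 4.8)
The plan is to prove the left-continuity statement (the right-continuity one being symmetric) by exploiting the wave representation together with the $L^1$-decay estimate of Lemma~\ref{L_exact_l_infty_decr}. The key point is that $\mathtt u(s)$ is constant along each trajectory $t \mapsto \mathtt X(t,s)$: the waves $s$ that arrive at $(\bar t,\bar x)$ from the left, i.e. with $\mathtt X(\bar t,s) < \bar x$ (equivalently $s$ to the left of $\mathtt S(\bar t,\bar x)$), carry the value $\mathtt u(s)$, and by Condition~\eqref{Cond_3_entrop} of the wave representation one has $\mathtt u(s) \to u(\bar t,\bar x-)$ as $s$ approaches the left endpoint of $\mathtt S(\bar t,\bar x)$. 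So I first record that $u(\bar t,\bar x-) = \lim_{s \uparrow s_-} \mathtt u(s)$, where $s_- := \min \mathtt S(\bar t,\bar x)$ (or the infimum, if $\mathtt S$ is a singleton), and likewise for the right limit.

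Next I would argue by contradiction: suppose there is a sequence $(t_i,x_i) \to (\bar t,\bar x)$ with $x_i < \gamma^-_{(\bar t,\bar x)}(t_i)$ but $|u(t_i,x_i) - u(\bar t,\bar x-)| > \e$ for some $\e > 0$. For $t_i < \bar t$ the idea is to apply Lemma~\ref{L_exact_l_infty_decr} with $\bar u = u(\bar t,\bar x-)$, choosing $\tau$ a spacelike curve through a point slightly to the left of $(\bar t, \bar x)$ and passing near $(t_i, x_i)$, and $\tau'$ a later spacelike curve through $(\bar t, \bar x - \eta)$ for small $\eta > 0$; since $x_i < \gamma^-_{(\bar t,\bar x)}(t_i)$, the relevant waves $s$ all lie to the left of $s_-$, and on the later curve their values $\mathtt u(s)$ are within $\e/4$ of $u(\bar t,\bar x-)$ (by the first step, choosing $\eta$ small). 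The oscillation being large on the earlier curve but small on the later one forces, through Lemma~\ref{L_exact_l_infty_decr}, a cancellation of size at least $\e/2$ in every sufficiently small neighborhood of $(\bar t,\bar x)$ restricted to $\{x < \gamma^-_{(\bar t,\bar x)}\}$. For $t_i \geq \bar t$ one uses instead Corollary~\ref{C_triangl_dep_tv} (the $L^\infty$-part) on a suitable domain of dependence sitting to the left of the curve $\gamma^-_{(\bar t,\bar x)}$, reducing again to a lower bound on $\xi^{\mathrm{inter}}$ (hence on the total interaction-plus-cancellation mass) in arbitrarily small neighborhoods.

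The conclusion is then that $(\bar t,\bar x)$ is an atom of $\xi^{\mathrm{canc}} + \xi^{\mathrm{inter}}$; but since these are finite (nonnegative) measures — the cancellation measure has total mass bounded by $\tv{u_0}$ and $\xi^{\mathrm{inter}}$ by Theorem~\ref{T_quadr_contin} — there are only countably many such atoms, which is not yet a contradiction. The resolution is that the \emph{location} matters: the argument shows the atom must lie \emph{strictly to the left} of $\gamma^-_{(\bar t,\bar x)}$, i.e. on the curve $\gamma^-_{(\bar t,\bar x)}$ itself in the limit, but a genuine cancellation or interaction atom there would mean that a positive measure of waves $s < s_-$ are annihilated or change speed discontinuously exactly at $(\bar t,\bar x)$; by Corollary~\ref{C_canc_on_jumps} a cancellation atom forces $(\bar t,\bar x)$ to be a jump point and one checks the waves involved cannot approach $\bar x$ from strictly inside $\{x < \gamma^-\}$, and an interaction atom contradicts the monotonicity of $s \mapsto \mathtt X(\bar t,s)$ combined with the definition of $\gamma^-$ as the infimum over $\mathtt S(\bar t,\bar x)$. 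I expect the main obstacle to be precisely this last step: carefully ruling out that the surviving (non-cancelled) waves approaching $\bar x$ from the left can oscillate — this requires using that $\mathtt u$ is $1$-Lipschitz and monotone-in-$s$ near $s_-$, so their values are squeezed, and that the open-interval structure of $\mathtt S(\bar t,\bar x)$ (as noted before the theorem) lets the curves $\gamma^\pm$ be prolonged, giving room to separate the "left" waves from those actually reaching $\bar x$. The genuinely nonlinear case in \cite{Bre} avoids this because there waves do not oscillate at all; here one only gets the one-sided cone/curve statement, which is exactly why the theorem is stated with the curves $\gamma^\pm_{(\bar t,\bar x)}$ rather than tangent lines.
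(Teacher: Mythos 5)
Your overall strategy is the same as the paper's: use the wave representation, the constancy of $\mathtt u(s)$ along trajectories to identify $u(\bar t,\bar x-)$ with $\lim_{s\uparrow s_-}\mathtt u(s)$, handle $t\geq\bar t$ by the $L^\infty$-decay estimate, and for $t<\bar t$ run a contradiction argument that localizes a cancellation of size at least $\e/2$ near $(\bar t,\bar x)$ on the left of $\gamma^-_{(\bar t,\bar x)}$. Up to that point the proposal matches the paper's proof. The problem is the concluding step. You deduce that $(\bar t,\bar x)$ must be an atom of $\xi^{\mathrm{canc}}+\xi^{\mathrm{inter}}$ and then try to exclude such an atom by invoking Corollary \ref{C_canc_on_jumps}, monotonicity of $s\mapsto \mathtt X(\bar t,s)$, and the $1$-Lipschitz property of $\mathtt u$. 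This cannot be made to work as a strategy: the theorem is asserted for \emph{every} point, explicitly including the points of $\Theta$ where the cancellation and interaction measures do have atoms, so any argument whose endgame is ``therefore $(\bar t,\bar x)$ is an atom, which is impossible'' is doomed. Your own closing paragraph concedes that this step is not closed.

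The correct resolution, which is the one the paper uses, is purely measure-theoretic and does not require excluding an atom at $(\bar t,\bar x)$. The bad sequence $(t_n,x_n)$ with $t_n<\bar t$ is eventually trapped in the sets
\begin{equation*}
E^\delta := \Big\{ (t,x) : -\delta < t-\bar t < 0, \ \sup\nolimits_{\mathtt X^{-1}(\bar t,x_n)} \mathtt X(t,\cdot) < x < \gamma^-_{\bar t,\bar x}(t) \Big\},
\end{equation*}
i.e.\ strictly between the backward trajectories of the waves sitting at points $x<\bar x$ at time $\bar t$ (whose values converge to $u(\bar t,\bar x-)$ since $u(\bar t,\cdot)$ has a left limit at $\bar x$) and the curve $\gamma^-_{\bar t,\bar x}$. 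Lemma \ref{L_exact_l_infty_decr} then forces $\xi^{\mathrm{canc}}(E^\delta)\geq \e/2$ for every $\delta>0$. But the $E^\delta$ form a decreasing family of Borel sets with $\bigcap_\delta E^\delta=\emptyset$ (they live in the time slab $\bar t-\delta<t<\bar t$ and in particular never contain $(\bar t,\bar x)$), so by continuity from above of the finite measure $\xi^{\mathrm{canc}}$ one gets $\xi^{\mathrm{canc}}(E^\delta)\to 0$, the desired contradiction. A possible atom of $\xi^{\mathrm{canc}}$ or $\xi^{\mathrm{inter}}$ at $(\bar t,\bar x)$ itself is simply irrelevant, because that point lies in none of the $E^\delta$. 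Replacing your final step by this observation repairs the proof and removes the need for the delicate ``ruling out oscillating surviving waves'' discussion you flagged as the main obstacle.
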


\begin{proof}
We prove the statement for the first limit, because the analysis of the second one is completely similar.

Firs of all Lemma \ref{L_exact_l_infty_decr} implies directly that
\begin{equation*}
\lim_{\substack{(t,x) \to (\bar t,\bar x) \\ x < \gamma^-_{\bar t,\bar x}(t), t \geq \bar t}} \big| u(t,x) - u(\bar t,\bar x-) \big| = 0.
\end{equation*}
If $(t_n,x_n) \to (\bar t,\bar x)$, $t_n < \bar t$ and $x < \gamma^-_{\bar t,\bar x}(t)$, is a sequence such that
\begin{equation*}
\lim_{n \to \infty} \big| u(t_n,x_n) - u(\bar t,\bar x-) \big| > \e,
\end{equation*}
then for all $\delta > 0$ there exists $n$ such that
\begin{equation*}
(t_n.x_n) \in E^\delta := \bigg\{ (t,x) : - \delta < t - \bar t < 0, \sup_{\mathtt X^{-1}(\bar t,x)} \mathtt X(t,s) < x_n < \gamma^-_{\bar t,\bar x}(t) \bigg\}.
\end{equation*}
Since $u(t,x) \to u(t,\bar x-)$ as $x \nearrow \bar x$, Lemma \ref{L_exact_l_infty_decr} implies thus that there exists a cancellation of ate least $\e/2$ in $E^\delta$. Since $\cap_\delta E^\delta = 0$, we reach a contradiction.
%
\end{proof}

Clearly to conclude which of these points are approximate jump points of $u$ (i.e. there exist right and left $L^1$-limits across a line), one has to say more about the properties of the curves $\gamma^\pm_{(\bar t,\bar x)}$.

Let $\Theta^\mathrm{canc}, \Theta^\mathrm{inter} \subset \R^2$ be the smallest countable sets where the atomic part of the cancellation measure $\xi^\mathrm{canc} =\mathtt X_\sharp (\partial_t \chi_{t < \mathtt T})$ and of the interaction measure $\xi^\mathrm{inter} = \mathtt X_\sharp (\int \partial_t \sigma(s) ds)$ are concentrated, respectively:
\begin{equation}
\label{E_count_Theta}
\xi^\mathrm{canc,a} = \sum_{(t_k,x_k) \in \Theta^\mathrm{canc}} c_k \delta_{(t_k,x_k)} 
, \qquad \xi^\mathrm{inter,a} = \sum_{(t_\ell,x_\ell) \in \Theta^\mathrm{inter}} c_\ell \delta_{(t_\ell,x_\ell)}, 
\end{equation}
with $c_k,d_\ell > 0$.

\begin{lemma}
\label{L_Theta_on_Xi}
It holds $\Theta := \Theta^\mathrm{canc} \cup \Theta^\mathrm{inter} \subset \Xi$.
\end{lemma}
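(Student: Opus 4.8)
The plan is to prove the two inclusions $\Theta^{\mathrm{canc}}\subseteq\Xi$ and $\Theta^{\mathrm{inter}}\subseteq\Xi$ separately. The first one is immediate from Corollary \ref{C_canc_on_jumps}: since $\xi^{\mathrm{canc}}$ is concentrated on the jump set $\Xi$, so is its atomic part $\xi^{\mathrm{canc},a}$, and hence every atom $(t_k,x_k)$ of $\xi^{\mathrm{canc}}$ lies in $\Xi$, i.e. $\Theta^{\mathrm{canc}}\subseteq\Xi$.

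For the interaction measure the first step is to compute the mass of $\xi^{\mathrm{inter}}$ at a single point. Unwinding the push--forward through $(t,s)\mapsto(t,\mathtt X(t,s))$ of the measure $\int_J \partial_t\sigma(\cdot,s)\,ds$, for every $(\bar t,\bar x)$ one obtains
\begin{equation*}
\xi^{\mathrm{inter}}\big(\{(\bar t,\bar x)\}\big)=\int_{\mathtt X^{-1}(\bar t,\bar x)}\big(\partial_t\sigma(\cdot,s)\big)\big(\{\bar t\}\big)\,ds ,
\end{equation*}
that is, the atom is the integral, over the waves sitting at $\bar x$ at time $\bar t$, of the signed jump in time of their speed at $\bar t$. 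Since $\big|(\partial_t\sigma(\cdot,s))(\{\bar t\})\big|\leq \tv{\sigma(\cdot,s);[0,\mathtt T(s))}$ and the latter is in $L^1(J)$ by Theorem \ref{T_quadr_contin}, the integral above vanishes whenever $\mathcal L^1\big(\mathtt X^{-1}(\bar t,\bar x)\big)=0$. Consequently, if $(\bar t,\bar x)$ is an atom of $\xi^{\mathrm{inter}}$, then necessarily $\mathcal L^1\big(\mathtt X^{-1}(\bar t,\bar x)\big)>0$.

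The second step is a dichotomy on $\mathtt X^{-1}(\bar t,\bar x)$. Since $\mathtt X(\bar t,\cdot)$ is defined only where $\mathtt T\geq\bar t$, this fibre splits into the \emph{alive} waves $A:=\{s:\mathtt X(\bar t,s)=\bar x,\ \mathtt T(s)>\bar t\}$ and the waves \emph{dying exactly at} $\bar t$, $B:=\{s:\mathtt X(\bar t,s)=\bar x,\ \mathtt T(s)=\bar t\}$. If $\mathcal L^1(A)>0$, then by the wave representation $|D_x u(\bar t)| = \mathtt X(\bar t)_\sharp\big(|\mathtt a(\bar t)|\,\mathcal L^1\big)$ (see \eqref{E_wave_cts_1}), together with $|\mathtt a(\bar t,s)|=\chi_{\{\bar t<\mathtt T(s)\}}$, the atom of $|D_x u(\bar t)|$ at $\bar x$ equals $\mathcal L^1(A)>0$, so $|u(\bar t,\bar x+)-u(\bar t,\bar x-)|>0$ and $(\bar t,\bar x)\in\Xi$. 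If instead $\mathcal L^1(A)=0$, then $\mathcal L^1(B)>0$; but by the very definition of $\xi^{\mathrm{canc}}$ one has $\xi^{\mathrm{canc}}(\{(\bar t,\bar x)\})=\mathcal L^1\big(\{s:\mathtt T(s)=\bar t,\ \mathtt X(\bar t,s)=\bar x\}\big)=\mathcal L^1(B)>0$, so $(\bar t,\bar x)$ is an atom of $\xi^{\mathrm{canc}}$ and $(\bar t,\bar x)\in\Xi$ by the inclusion $\Theta^{\mathrm{canc}}\subseteq\Xi$ already proved. In both cases $(\bar t,\bar x)\in\Xi$, hence $\Theta^{\mathrm{inter}}\subseteq\Xi$ and the lemma follows.

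I expect the only genuinely delicate implication to be "$\mathcal L^1(\mathtt X^{-1}(\bar t,\bar x))>0\Rightarrow(\bar t,\bar x)\in\Xi$": this is \emph{false} in general, since a complete cancellation concentrated at one point produces a non--degenerate, entirely dead fibre of $\mathtt X(\bar t,\cdot)$ sitting over a continuity point of $u(\bar t,\cdot)$, and it is precisely such configurations that must be recovered by routing through $\xi^{\mathrm{canc}}$ and Corollary \ref{C_canc_on_jumps}. Everything else is routine: the identification of the point mass of $\xi^{\mathrm{inter}}$ from the definition of push--forward, and the $L^1(J)$ summability of the wave--speed variation supplied by Theorem \ref{T_quadr_contin}.
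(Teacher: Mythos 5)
Your proof is correct, and for the inclusion $\Theta^{\mathrm{inter}}\subseteq\Xi$ it takes a genuinely different route from the paper. The paper argues by contraposition through the PDE structure: at a point $(\bar t,\bar x)\notin\Xi$ the solution is continuous (Proposition \ref{P_cont_out_jumps}), hence $\tilde\lambda(t,x)\to f'(u(\bar t,\bar x))$ there, and the identification $\partial_t\mathtt X(t,s)=\tilde\lambda(t,\mathtt X(t,s))$ of Proposition \ref{P_cancell_points} then rules out a concentrated jump of the wave speeds at $(\bar t,\bar x)$, so no atom of $\xi^{\mathrm{inter}}$ can sit there. You instead work purely measure-theoretically on the Lagrangian side: you unwind the push-forward to express the atom of $\xi^{\mathrm{inter}}$ at $(\bar t,\bar x)$ as an integral over the fibre $\mathtt X^{-1}(\bar t,\bar x)$, use the $L^1(J)$ bound of Theorem \ref{T_quadr_contin} to force $\mathcal L^1(\mathtt X^{-1}(\bar t,\bar x))>0$, and then split the fibre into alive and dying waves: the alive part produces an atom of $|D_x u(\bar t)|$ at $\bar x$ via \eqref{E_wave_cts_1} (hence a jump), while the dying part produces an atom of $\xi^{\mathrm{canc}}$, which is absorbed by the first inclusion via Corollary \ref{C_canc_on_jumps}. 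Your argument avoids Propositions \ref{P_cont_out_jumps} and \ref{P_cancell_points} altogether and is more quantitative and self-contained (the paper's ``one can conclude by means of Proposition \ref{P_cancell_points}'' is rather terse); the paper's argument, on the other hand, ties the statement to the characteristic structure of the equation. Your closing caveat about complete cancellations is exactly the right thing to flag, and your dichotomy handles it correctly regardless of whether such configurations actually occur for entropy solutions.
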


\begin{proof}
For $\Theta^\mathrm{canc}$ this is a direct consequence of Corollary \ref{C_canc_on_jumps}. Let now $(\bar t,\bar x)$ be a continuity point of $u$. Then
\begin{equation*}
\lim_{(t,x) \to (\bar t,\bar x)} \big| \tilde \lambda(t,x) - f'(u(\bar t,\bar x)) \big| = 0.
\end{equation*}
This implies also that 
\begin{equation*}
\lim_{(t,\mathtt X(t,s)) \to (\bar t,\bar x)} \big| \tilde \lambda(t,\mathtt X(t,s)) - f'(u(\bar t,\bar x)) \big| = 0,
\end{equation*}
and one can conclude by means of Proposition \ref{P_cancell_points}.
\end{proof}

We now identify the jump points of $u$, which coincide with $\Xi \setminus \Theta$, where as before $\Theta = \Theta^\mathrm{canc} \cup \Theta^\mathrm{inter}$. 

\begin{theorem}
\label{T_rect_out_inter}
It $(\bar t,\bar x) \notin \Theta$, then $(\bar t,\bar x)$ is an $L^1$-approximate jump point of $u$. Moreover, if $(1,p)$ is the approximate tangent of $\Xi$ in $(\bar t,\bar x)$, then $p = \tilde \lambda(\bar t,\bar x)$ and for all $\delta > 0$ it holds
\begin{equation}
\label{E_cts_cone}
\lim_{\substack{(t,x) \to (\bar t,\bar x) \\ x < \bar x + p(t-\bar t) - \delta |t - \bar t|}} u(t,x) = u(\bar t,\bar x-), \qquad \lim_{\substack{(t,x) \to (\bar t,\bar x) \\ x > \bar x + p(t-\bar t) + \delta |t - \bar t|}} u(t,x) = u(\bar t,\bar x+).
\end{equation}
\end{theorem}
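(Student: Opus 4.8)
The strategy is to combine the regularity across $\Xi$ obtained in Theorem \ref{T_regul_jumps} with the fact that, away from the atoms of $\xi^{\mathrm{canc}}$ and $\xi^{\mathrm{inter}}$, the two bounding curves $\gamma^\pm_{(\bar t,\bar x)}$ become tangent to a single line of slope $\tilde\lambda(\bar t,\bar x)$ at $(\bar t,\bar x)$. First I would set $p := \tilde\lambda(\bar t,\bar x)$, noting that since $(\bar t,\bar x)\notin\Theta\supset\Theta^{\mathrm{canc}}$, Corollary \ref{C_canc_on_jumps} type reasoning (or rather the absence of cancellation mass) gives that no wave $s$ with $\mathtt X(\bar t,s)=\bar x$ is cancelled at $\bar t$; moreover the waves in $\mathtt S(\bar t,\bar x)$ all have value $\mathtt u(s)$ ranging over $[u(\bar t,\bar x-),u(\bar t,\bar x+)]$ by Condition \eqref{Cond_3_entrop}. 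The heart of the argument is the estimate on the speed: by Proposition \ref{P_cancell_points}, $\partial_t\mathtt X(t,s)=\tilde\lambda(t,\mathtt X(t,s))$ for a.e.\ $t$, so I would show that for every $s$ with $(\bar t,\bar x)$ on its trajectory, $\sigma(t,s)\to p$ as $(t,\mathtt X(t,s))\to(\bar t,\bar x)$, using that the atomic interaction measure $\xi^{\mathrm{inter},a}$ has no atom at $(\bar t,\bar x)$ together with the quadratic bound of Theorem \ref{T_quadr_contin}; this forces the oscillation of $\sigma$ near $(\bar t,\bar x)$ along each trajectory to vanish, and hence $\gamma^\pm_{(\bar t,\bar x)}(t) = \bar x + p(t-\bar t) + o(|t-\bar t|)$.

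Second, I would derive the cone-continuity statement \eqref{E_cts_cone}. Fix $\delta>0$. Since $\gamma^-_{(\bar t,\bar x)}(t)\ge \bar x + p(t-\bar t) - \tfrac{\delta}{2}|t-\bar t|$ for $|t-\bar t|$ small by the previous step, any point $(t,x)$ with $x < \bar x + p(t-\bar t) - \delta|t-\bar t|$ lies to the left of $\gamma^-_{(\bar t,\bar x)}(t)$ for $t$ near $\bar t$; Theorem \ref{T_regul_jumps} then gives $u(t,x)\to u(\bar t,\bar x-)$. The symmetric argument with $\gamma^+_{(\bar t,\bar x)}$ handles the right limit. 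This already shows the cone-continuity; it remains only to upgrade it to an $L^1$-approximate jump and to identify the approximate tangent of $\Xi$.

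Third, for the approximate-jump conclusion: the two cones $\{x < \bar x + p(t-\bar t) - \delta|t-\bar t|\}$ and $\{x > \bar x + p(t-\bar t) + \delta|t-\bar t|\}$ fill up, as $\delta\downarrow 0$, all of a neighborhood of $(\bar t,\bar x)$ except a region of density zero at $(\bar t,\bar x)$ (a thin wedge around the line $x=\bar x + p(t-\bar t)$ whose $\mathcal L^2$-measure in a ball of radius $r$ is $O(\delta r^2)$). On the complementary region $u$ is $L^1$-close to $u(\bar t,\bar x\pm)$, so the rescaled functions $u(\bar t + rt, \bar x + rx)$ converge in $L^1_{\mathrm{loc}}$ to the step function $u(\bar t,\bar x-)\chi_{\{x<pt\}} + u(\bar t,\bar x+)\chi_{\{x>pt\}}$; since $u(\bar t,\bar x-)\ne u(\bar t,\bar x+)$ (because $(\bar t,\bar x)\in\Xi$), this is exactly the definition of an $L^1$-approximate jump point with jump slope $p$, and by the structure theory of BV functions the approximate tangent to $\Xi$ at such a point is $(1,p)$. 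I would invoke Section 5.9 of \cite{EG} for the last step.

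The main obstacle I expect is the second step: showing that the speed $\sigma(t,s)$ has vanishing oscillation near $(\bar t,\bar x)$ along the trajectories $t\mapsto\mathtt X(t,s)$, uniformly over $s\in\mathtt S(\bar t,\bar x)$. The subtlety is that Theorem \ref{T_quadr_contin} controls $\int_J \tv{\sigma(\cdot,s);[0,\mathtt T(s))}\,ds$ globally, but one needs the contribution localized near $(\bar t,\bar x)$ to be small, which requires that the interaction measure $\xi^{\mathrm{inter}}$ put no atom there (true since $(\bar t,\bar x)\notin\Theta^{\mathrm{inter}}$) \emph{and} that its non-atomic part has vanishing density along the relevant set of trajectories — this is where care is needed, combining the pushforward identity $\xi^{\mathrm{inter}} = \mathtt X_\sharp(\int_J \partial_t\sigma(s)\,ds)$ with the fact that $\mathcal L^1(\mathtt X^{-1}(t,x))\ge |D_x u|(t,x)$ to transfer smallness from the $(t,x)$-plane back to the wave variable $s$. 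I would also need the elementary observation that $\tilde\lambda(\bar t,\bar x)$ computed by Volpert's rule for the jump coincides with the common limiting speed of all the waves in $\mathtt S(\bar t,\bar x)$, which follows from the wave representation of $D_t u$ and the Rankine–Hugoniot relation.
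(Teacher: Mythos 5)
Your first step (for $\mathcal L^1$-a.e.\ $s\in\mathtt S(\bar t,\bar x)$ the speed $\sigma(\cdot,s)$ is continuous at $\bar t$ because $\xi^{\mathrm{inter}}$ has no atom there, and monotonicity of $s\mapsto\mathtt X(t,s)$ then forces $\sigma(\bar t,s)=\tilde\lambda(\bar t,\bar x)$) and your last step (deducing the $L^1$-approximate jump from \eqref{E_cts_cone}) are exactly the paper's. But the central step is not salvageable: you claim that $\gamma^\pm_{(\bar t,\bar x)}(t)=\bar x+p(t-\bar t)+o(|t-\bar t|)$ and then obtain \eqref{E_cts_cone} by inclusion of the $\delta$-cones in the regions $\{x<\gamma^-_{(\bar t,\bar x)}(t)\}$, $\{x>\gamma^+_{(\bar t,\bar x)}(t)\}$ where Theorem \ref{T_regul_jumps} applies. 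That tangency claim is false in general, and the paper itself exhibits a counterexample in Section \ref{Sss_not_jump}: a countable family of jumps of vanishing strength whose interaction points accumulate at $(\bar t,\bar x)$ gives a point with $(\bar t,\bar x)\notin\Theta$ at which $\gamma^+_{(\bar t,\bar x)}$ is \emph{not} tangent to the Rankine--Hugoniot line. The obstruction is exactly the non-uniformity you flagged as "where care is needed": the a.e.-$s$ continuity of $\sigma(\cdot,s)$ at $\bar t$ gives tangency of each trajectory with a modulus $\eta(s)$ that may degenerate as $s$ approaches the endpoints of $\mathtt S(\bar t,\bar x)$, and since $\gamma^\pm$ are the extremal envelopes they are governed precisely by those endpoint waves. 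No amount of massaging of the pushforward identity for $\xi^{\mathrm{inter}}$ will restore uniformity, because the exceptional waves form a set of small but positive measure and contribute negligibly to $\int_J|D_t\sigma(s)|\,ds$ while still deflecting $\gamma^\pm$ at a definite slope. This is the very reason the statement carries the parameter $\delta$ and cannot be reduced to Theorem \ref{T_regul_jumps}.

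The paper's proof circumvents this as follows: only the waves $s$ at distance at least $\e/2$ from $\inf\mathtt X^{-1}(\bar t,\bar x)$ are used, and for these the tangency does hold in a sufficiently small neighborhood $O$, so their trajectories avoid the cone $\{x<\bar x+p(t-\bar t)-\delta|t-\bar t|\}\cap O$. If $u$ deviated from $u(\bar t,\bar x-)$ by more than $\e$ at points of that cone accumulating at $(\bar t,\bar x)$, the $L^\infty$-type estimate of Lemma \ref{L_exact_l_infty_decr} would force a cancellation of order $\e/4$ in every such $O$, i.e.\ an atom of $\xi^{\mathrm{canc}}$ at $(\bar t,\bar x)$, contradicting $(\bar t,\bar x)\notin\Theta^{\mathrm{canc}}$. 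You need this extra cancellation argument (or an equivalent one) to handle the waves near the endpoints of $\mathtt S(\bar t,\bar x)$; as written, your proof establishes the conclusion only at points where $\gamma^\pm_{(\bar t,\bar x)}$ happen to be tangent to the jump line, which is a strictly smaller set.
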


\begin{proof}
It is clear that \eqref{E_cts_cone} imply the first, so we will only prove the second part of the statement.

From the previous lemma we have only to consider a point $(\bar t,\bar x)$ such that (for definiteness)
\begin{equation*}
u(\bar t,\bar x-) < u(\bar t,\bar x+).
\end{equation*}
Since $(\bar t,\bar x) \notin \Theta^\mathrm{inter}$, it follows that for $\mathcal L^1$-a.e. $s \in \mathtt S(\bar t,\bar x)$ it holds
\begin{equation*}
\lim_{t \to \bar t} \sigma(t,s) = \sigma(\bar t,s).
\end{equation*}
Since $s \mapsto \mathtt X(t,s)$ is monotone, the only possibility is that $\sigma(\bar t,s) = \tilde \lambda(\bar t,\bar x)$.

Assume now by contradiction that there exists a sequence $(t_n,x_n)$ such that
\begin{equation*}
(t_n,x_n) \to (\bar t,\bar x), \qquad x_n < \tilde \lambda(\bar t,\bar x) (t_n-\bar t) + \delta |t_n-\bar t|
\end{equation*}
and
\begin{equation*}
\lim_{n \to \infty} \big| u(t_n,x_n) - u(\bar t,\bar x-) \big| > \epsilon.
\end{equation*}
Since all the curves $t \mapsto \mathtt X(t,s)$, $s \in \mathtt X^{-1}(\bar t,\bar x)$ are tangent with derivative $\tilde \lambda(\bar t,\bar x)$, then it follows that the in a sufficiently small neighborhood $O$ of $(\bar t,\bar x)$ it holds
\begin{equation*}
\Big\{ \mathtt X(t,s): t \in [0,\mathtt T(s)], s > \inf \mathtt X^{-1}(\bar t,\bar x) + \e/2 \Big\} \cap O \cap \big\{ x < \tilde \lambda(\bar t,\bar x) (t-\bar t) + \delta |t-\bar t| \big\} = \emptyset.
\end{equation*}
A completely similar argument to the one used in the proof of Theorem \ref{T_regul_jumps} implies that in the set O a cancellation of order $\e/4$, and by shrinking $O$ to $(\bar t,\bar x)$ we obtain a contradiction.

The proof for $x > \bar x + \tilde \lambda(\bar t,\bar x) (t-\bar t) + \delta |t - \bar t|$ is analogous.
\end{proof}

\subsection{Examples}
\label{S_examples}

In order to show that our results are almost optimal, and that the situation in this general settings is more complicated that in the standard genuinely nonlinear situation, we recall the example given in Section 6 of \cite{BY-global}.

%
%
The example shows the existence of a $2 \times 2$ system of the form
\begin{equation*}
\begin{cases}
u_t+f(u,v)_x=0,\\
v_t-v_x=0,
\end{cases}
\end{equation*}
such that for carefully chosen initial data $(u_0,v_0)$ the wave pattern is as in Figure \ref{Fi_wave_pattern}: the jump set of the first component $u$ is the line $x=0$, and actual jump set (i.e. the points where $u(t)$ is discontinuous) is a Cantor compact set $K \times \{0\}$ of positive $\mathcal H^1$-measure. This can be done by constructing some suitable $f$ to be convex w.r.t $u$ when $v$ is positive and concave w.r.t $u$ when $v$ negative.

The behavior of $u$ in the complementary open set is shown in Figure \ref{Fi_construction_f}: due to transversal interactions with $v$, first a rarefaction wave open the jump, then it becomes a compression wave, and then it becomes again a shock.

\begin{figure}
\begin{minipage}{5cm}
\resizebox{5cm}{!}{
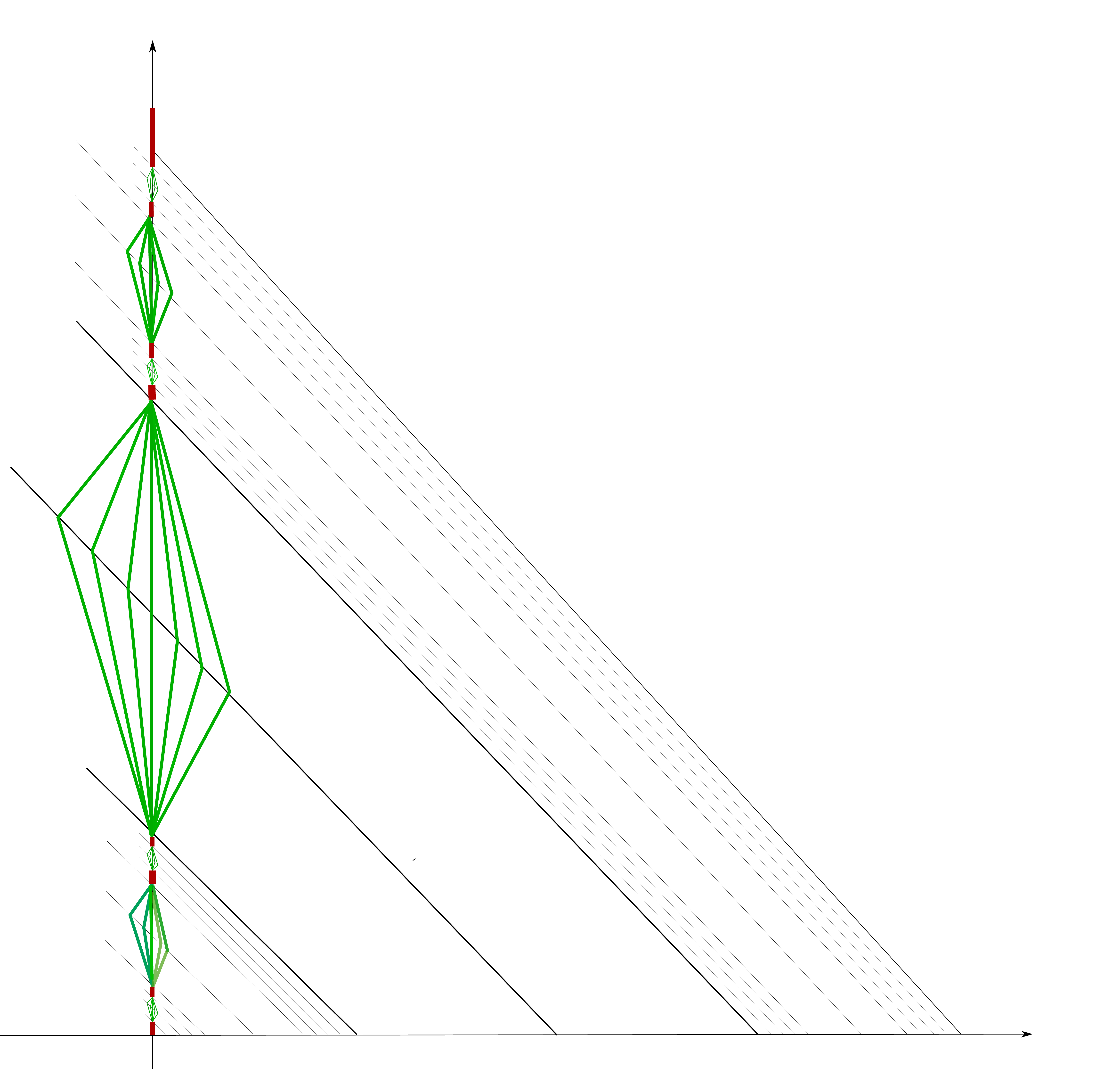}
\caption{}
\label{Fi_wave_pattern}
\end{minipage}
\begin{minipage}{7cm}
\resizebox{7cm}{!}{
\input{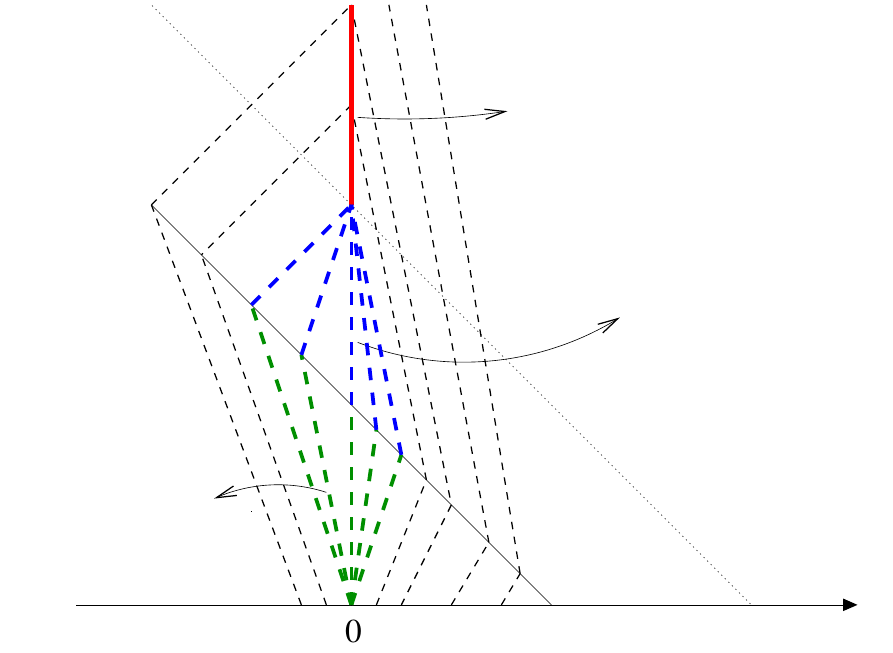_t}}
\caption{}
\label{Fi_construction_f}
\end{minipage}
\end{figure}

It is fairly so see that the the curves $\gamma^\pm_{(\bar t,\bar x)}$ (with $\bar x = 0$ here) cannot coincide on an arbitrarily small neighborhood of every point $\bar t \in K$: in fact, in every neighborhood of $\bar t$ there is an open set of the form shown in Figure \ref{Fi_construction_f}. Thus, at least for general systems, a result like Theorem \ref{T_regul_jumps} cannot be improved.

\subsubsection{Cancellation vs interaction measure}
\label{Sss_canc_vs_inter}

Here we show that the cancellation measure $\mathtt X_\sharp (\partial_t \chi)$ is not a.c. w.r.t. the interaction measure $\mathtt X_\sharp (\int \partial_t \sigma(s) ds )$. 

\begin{figure}
\begin{minipage}{7cm}
\resizebox{7cm}{!}{
\input{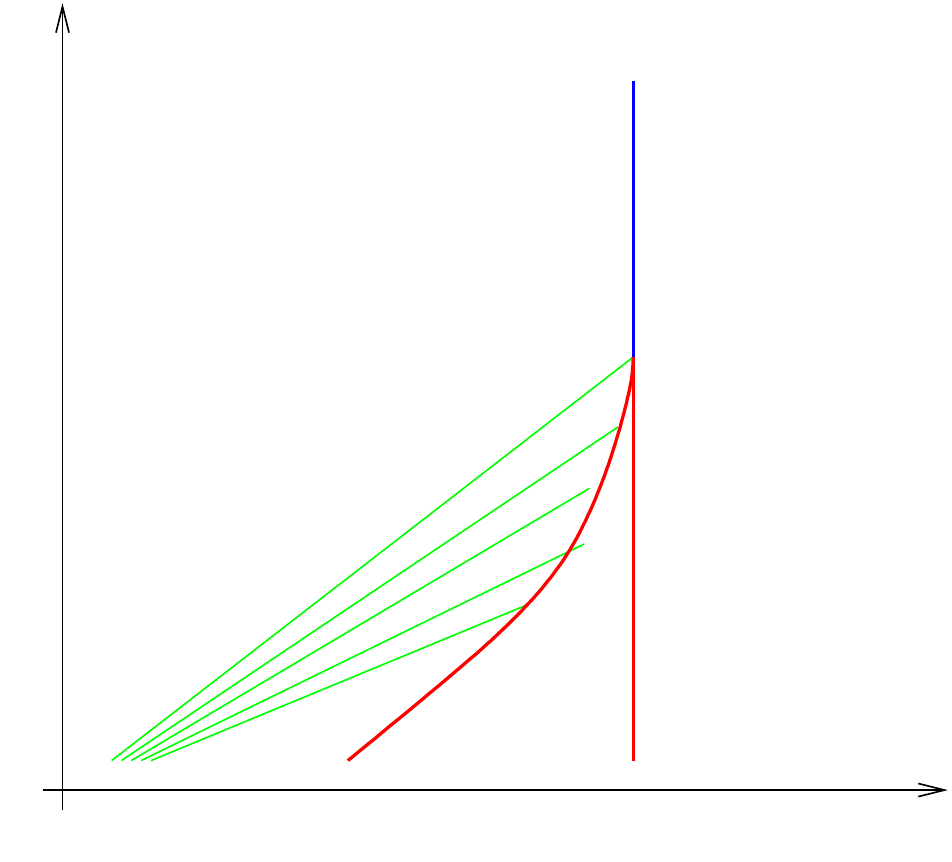_t}}
\caption{}
\label{Fi_canc_vs_inter_1}
\end{minipage}
\begin{minipage}{7cm}
\resizebox{7cm}{!}{
\input{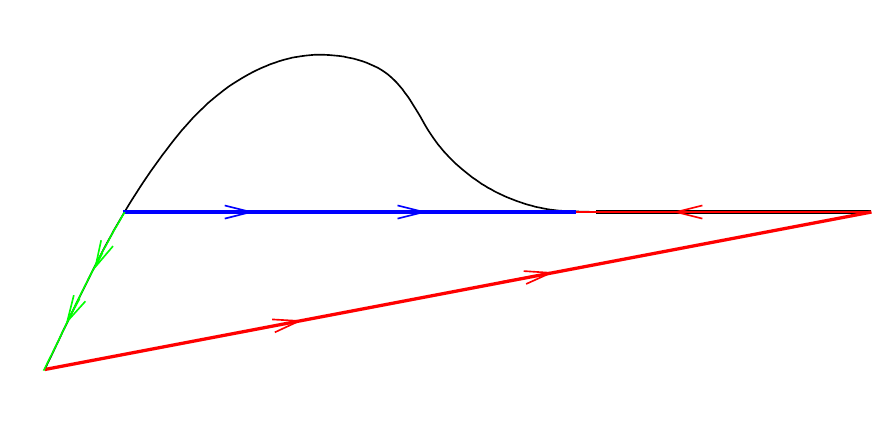_t}}
\caption{}
\label{Fi_canc_vs_inter_2}
\end{minipage}
\end{figure}

In Figure \ref{Fi_canc_vs_inter_1} it is shown a wave pattern produced by the flux function represented in Figure \ref{Fi_canc_vs_inter_2} with suitable initial conditions: two jumps of different sign meet at a given point with the same speed. This is achieved by slowing down the shock $[u^-_m,u^+_m]$ by a rarefaction.

It follows that in the point $(\bar t,\bar x)$ a cancellation of size $2(u^+_m - u+)$ occurs, but the interaction measure is $0$.

\subsubsection{\texorpdfstring{Points outside $\Theta$ in which the curves $\gamma^\pm$ are not tangent}{Points not of strong interaction or cancellation which the curves are not tangent}}
\label{Sss_not_jump}

The final example shows that even if no strong interaction or cancellation occurs in a point $(\bar t,\bar x) \in \Xi$, then the curves $\gamma^\pm_{(\bar t,\bar x)}$ may be not tangent in $(\bar t,\bar x)$.

\begin{figure}
\begin{minipage}{7cm}
\resizebox{5cm}{!}{
\input{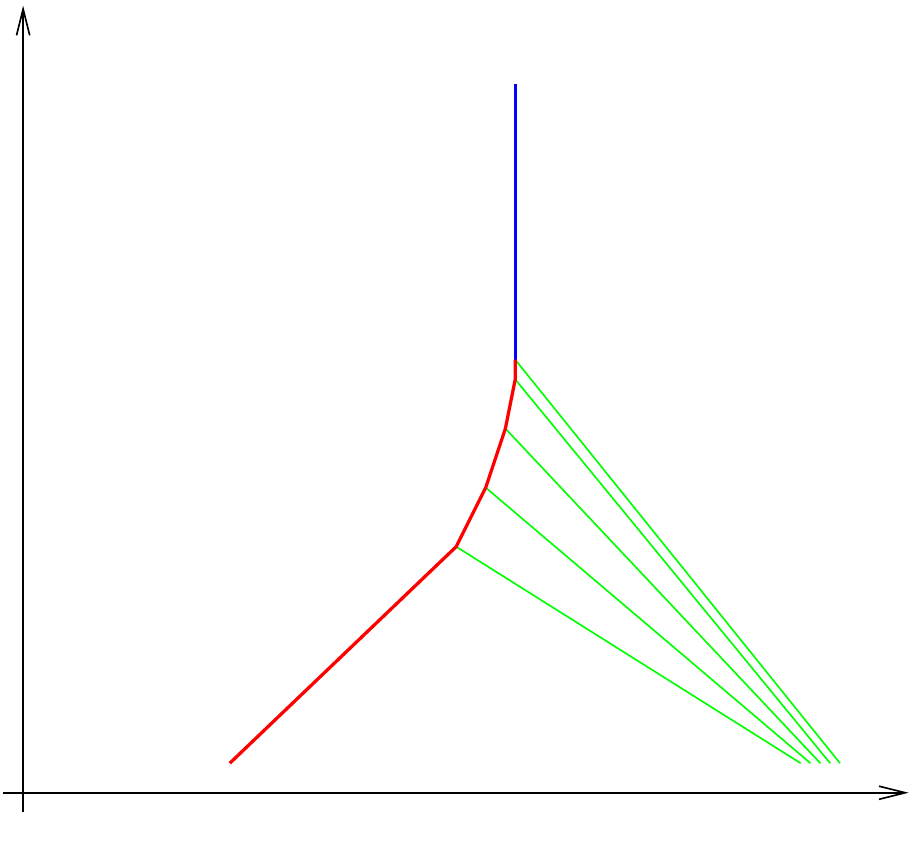_t}}
\caption{}
\label{Fi_can_inter_jump_2}
\end{minipage}
\begin{minipage}{7cm}
\begin{center}
\resizebox{7cm}{!}{
\input{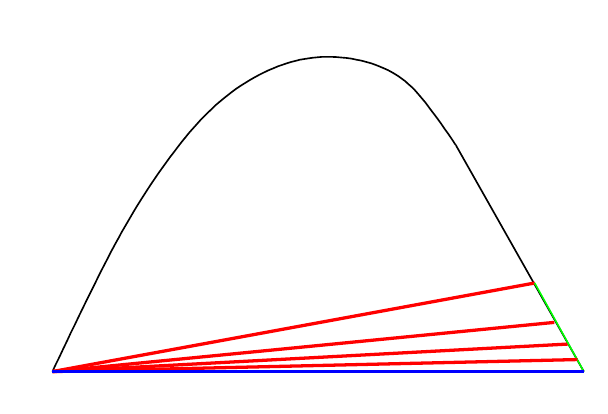_t}}
\caption{}
\label{Fi_can_inter_jump_1}
\end{center}
\end{minipage}
\end{figure}

If Figure \ref{Fi_can_inter_jump_2} it is shown a wave pattern produced by the flux function represented in Figure \ref{Fi_can_inter_jump_1} with suitable initial conditions: a countable family of jumps approaching from the right and whose interaction points converge to $(\bar t,\bar x)$. The size of the these jumps converges to $0$, so that $(\bar t, \bar x) \notin \Theta$.

It is fairly easy to see that:
\begin{enumerate}
\item $(\bar t,\bar x)$ is a jump points of $u$;
\item the curves $\gamma^\pm_{(\bar t,\bar x)}$ are not parallel at $(\bar t,\bar x)$.
\end{enumerate}

This example thus justifies the analysis of Theorem \ref{T_rect_out_inter}, particular the parameter $\delta$ appearing in \eqref{E_cts_cone}. 


\bibliographystyle{siam}
\bibliography{ref}

\end{document}